\DeclareMathOperator{\cl}{Cl_2}
\renewcommand\footnotemark{}
\begin{document}

\title{Revisiting the saddle-point method of Perron}

\author{Cormac O'Sullivan\footnote{
\newline
{\em 2010 Mathematics Subject Classification.}  41A60, 11P82
\newline
{\em Key words and phrases.} Asymptotics, saddle-point method, Sylvester waves.
\newline
Support for this project was provided by a PSC-CUNY Award, jointly funded by The Professional Staff Congress and The City University of New York.}}

\date{Feb 22, 2018}

\maketitle

\def\s#1#2{\langle \,#1 , #2 \,\rangle}

\def\H{{\mathbf{H}}}
\def\F{{\frak F}}
\def\C{{\mathbb C}}
\def\R{{\mathbb R}}
\def\Z{{\mathbb Z}}
\def\Q{{\mathbb Q}}
\def\N{{\mathbb N}}
\def\G{{\Gamma}}
\def\GH{{\G \backslash \H}}
\def\g{{\gamma}}
\def\L{{\Lambda}}
\def\ee{{\varepsilon}}
\def\K{{\mathcal K}}
\def\Re{\mathrm{Re}}
\def\Im{\mathrm{Im}}
\def\PSL{\mathrm{PSL}}
\def\SL{\mathrm{SL}}
\def\Vol{\operatorname{Vol}}
\def\lqs{\leqslant}
\def\gqs{\geqslant}
\def\sgn{\operatorname{sgn}}
\def\res{\operatornamewithlimits{Res}}
\def\li{\operatorname{Li_2}}
\def\lip{\operatorname{Li}'_2}
\def\pl{\operatorname{Li}}
\def\nb{{\mathcal B}}
\def\cc{{\mathcal C}}
\def\nd{{\mathcal D}}
\def\dd{\displaystyle}

\def\clp{\operatorname{Cl}'_2}
\def\clpp{\operatorname{Cl}''_2}
\def\farey{\mathscr F}

\newcommand{\stira}[2]{{\genfrac{[}{]}{0pt}{}{#1}{#2}}}
\newcommand{\stirb}[2]{{\genfrac{\{}{\}}{0pt}{}{#1}{#2}}}
\newcommand{\norm}[1]{\left\lVert #1 \right\rVert}


\newtheorem{theorem}{Theorem}[section]
\newtheorem{lemma}[theorem]{Lemma}
\newtheorem{prop}[theorem]{Proposition}
\newtheorem{conj}[theorem]{Conjecture}
\newtheorem{cor}[theorem]{Corollary}
\newtheorem{assume}[theorem]{Assumptions}

\newcounter{coundef}
\newtheorem{adef}[coundef]{Definition}

\newcounter{counrem}
\newtheorem{remark}[counrem]{Remark}

\renewcommand{\labelenumi}{(\roman{enumi})}
\newcommand{\spr}[2]{\sideset{}{_{#2}^{-1}}{\textstyle \prod}({#1})}
\newcommand{\spn}[2]{\sideset{}{_{#2}}{\textstyle \prod}({#1})}

\numberwithin{equation}{section}

\bibliographystyle{alpha}

\begin{abstract}
Perron's saddle-point method gives a way to find the complete asymptotic expansion of certain integrals that depend on a parameter going to infinity. We give two proofs of the key result. The first is a reworking of Perron's original proof, showing the clarity and simplicity that has been lost in some subsequent treatments. The second proof extends the approach of Olver which is based on Laplace's method. New results include more precise error terms and bounds for the expansion coefficients. We also treat Perron's original examples in greater detail and give a new application to the asymptotics of Sylvester waves.
\end{abstract}

\section{Introduction}
The main problem under consideration here is the accurate estimation of
\begin{equation} \label{i(n)}
    \int_{\cc} e^{N \cdot p(z)} q(z) \, dz
\end{equation}
as $N \to \infty$, where $p$ and $q$ are holomorphic functions and  integration is along a contour $\cc$. If the contour can be moved to pass through a saddle-point of $p(z)$ so that $\Re(p(z))$ achieves its maximum on $\cc$ there, then  the complete asymptotic expansion of \eqref{i(n)} may be given quite explicitly. This was established   one hundred years ago  by Perron in the groundbreaking paper \cite{Pe17}.

Unfortunately, this paper is now difficult to obtain. There seem to be two detailed accounts of the method that are more recent. Wong refers to {\em Perron's method} in  \cite[Part II, Sect. 5]{Wo89} and gives a statement and proof based on work of Wyman in \cite{Wy64}. These include an extra condition that does not appear in  \cite{Pe17}. The second account, by Olver in \cite[Thm.~6.1, p.~125]{Ol}, refers only to the {\em saddle-point method} and  does not include this extra condition. However it also does not include Perron's formula for the asymptotic expansion coefficients, nor give Perron's clear description of how the result is affected by the behavior of the contour $\cc$  near the saddle-point. Olver refers to \cite{Wy64} but his proof is different and more similar to Laplace's method.

To resolve these discrepancies, our first aim
is to produce a clear proof of the asymptotic expansion of \eqref{i(n)} based closely on Perron's original ideas. We see that the result may be  stated simply and is easy to apply.
  We also give a second proof that extends the work of Olver mentioned above. In two innovations,  the dependence of the error on $q(z)$ is made explicit, as required by our new application to the asymptotics of Sylvester waves in Section
\ref{syl}, and we show a bound for the expansion coefficients with Proposition \ref{albnd}. 

As a simple example of the asymptotics that Perron's method produces, we see in Section \ref{simex} that
\begin{equation*}
  \int_{1/2}^{3/2} e^{N(-z+\log z)}\, dz = \frac{\sqrt{2\pi}}{N^{1/2}e^N}\left(1+\frac{1}{12N}+\frac{1}{288N^2} -\frac{139}{51840 N^3}+O\left( \frac{1}{N^4}\right) \right)
\end{equation*}
as $N \to \infty$. Perron's original motivation was in finding the asymptotics  of the integral
\begin{equation}\label{vare}
   \int_{-\pi}^{\pi} \frac{e^{N i(z- \varepsilon\sin z)}}{1-\varepsilon\cos z} \,dz,
\end{equation}
which occurs in Kepler's theory when relating the true anomaly to the mean anomaly for a body orbiting in an ellipse with eccentricity $\varepsilon$. As described in \cite{Bu14}, the initial terms of the asymptotic expansion of \eqref{vare} had already been found by Jacobi, Cauchy and Debye, for example,  with difficult methods. Burkhardt in  \cite{Bu14} outlined a simpler approach and Perron was able to extend Burkhardt's ideas and make them rigorous. In \cite[Sect. 5]{Pe17} it is shown how to calculate as many terms as one wishes in the expansion of \eqref{vare} and several related integrals. We complete these examples in Section \ref{app}  by giving explicit formulas for all their expansion coefficients.

Perron's method has many other applications, for example to the asymptotics of special functions used in pure and applied mathematics \cite{Cop},\cite[Chap. 4]{Ol}, \cite{LPS2009b}, \cite{LP2011}, to statistics and probability \cite[Chap. 7]{Small}, and to results in combinatorics and number theory \cite[Chap. 6]{deB}, \cite[Sect. VIII]{Fl09}.
The author's interest in this area began with \cite{OS15,OS1}, where the method was key in obtaining the asymptotics of Rademacher's coefficients and disproving Rademacher's conjecture about them. The results described in Section
\ref{syl} on Sylvester waves are an extension of the work in \cite{OS1}.

\subsection{Main results}
The usual convention that the principal branch of $\log$ has arguments in $(-\pi,\pi]$ is used.
As in  \eqref{as} below, powers of nonzero complex numbers take the corresponding principal value $z^{\tau}:=e^{\tau\log(z)}$
for $\tau \in \C$. This convention will be in place throughout the paper, however in some cases we will specify different branches of the power.

Our contours of integration $\cc$ will lie in a bounded region of $\C$ and be parameterized by
  a continuous function   $c:[0,1]\to \C$ that has a continuous derivative except at a finite number of points.  For any appropriate $f$, integration along the corresponding contour $\cc$ is defined as $\int_\cc f(z)\, dz := \int_0^1 f(c(t))c'(t)\, dt$ in the normal way. 

The notation $f(z)=O(g(z))$, or equivalently $f(z) \ll g(z)$, means that there exists a $C$ so that $|f(z)|\lqs C\cdot g(z)$ for all $z$ in a specified range. The number $C$ is called the {\em implied constant}.


\SpecialCoor
\psset{griddots=5,subgriddiv=0,gridlabels=0pt}
\psset{xunit=1cm, yunit=1cm}
\psset{linewidth=1pt}
\psset{dotsize=4pt 0,dotstyle=*}

\begin{figure}[h]
\begin{center}
\begin{pspicture}(0,0.5)(9,3.5) 

\psset{arrowscale=2,arrowinset=0.5}
\pscircle[fillstyle=solid,linecolor=gray,fillcolor=mistyrose](4,2){1}

\pscurve[linecolor=black]{->}(0,1.5)(2,2.5)(4,2)(8,2.5)

\rput(7,1.9){$\cc$}
\rput(4,0.7){$\nb$}
\rput(4.3,2.25){$z_0$}

\psdots(0,1.5)(4,2)(8,2.5)

\end{pspicture}
\caption{Neighborhood $\nb$ and path of integration $\cc$}\label{basicfig}
\end{center}
\end{figure}

In our main results we make the following assumptions and definitions.

\begin{assume} \label{ma0}
We have $\nb$ a neighborhood of $z_0 \in \C$.  Let $\cc$ be  a contour as described above, with $z_0$  a point on it. Suppose $p(z)$ and $q(z)$ are holomorphic functions on a domain containing $\nb \cup \cc$.  We  assume $p(z)$ is not constant and  hence there must exist $\mu \in \Z_{\gqs 1}$ and $p_0 \in \C_{\neq 0}$ so that
\begin{equation}
    p(z)  =p(z_0)-p_0(z-z_0)^\mu(1-\phi(z)) \qquad  (z\in \nb) \label{f}
\end{equation}
with $\phi$  holomorphic on $\nb$ and $\phi(z_0)=0$.
Let $\omega_0:=\arg(p_0)$ and we will need the {\em steepest-descent angles}
\begin{equation}\label{bisec}
    \theta_\ell := -\frac{\omega_0}{\mu}+\frac{2\pi \ell}{\mu} \qquad (\ell \in \Z).
\end{equation}
For later results we  require $a \in \C$. We also assume that  $\nb,$ $\cc,$ $p(z),$  $q(z)$, $z_0$ and $a$  are independent of  $N>0$. Finally, let $K_q$ be a bound for $|q(z)|$ on $\nb \cup \cc$.
\end{assume}

The following is a slight restatement of Perron's key result in \cite[p. 202]{Pe17}. It may be compared with
\cite[Thm. 4, p. 105]{Wo89} and \cite[Thm. 6.1, p. 125]{Ol}.

\begin{theorem}\label{il} {\rm (Perron's method for a holomorphic integrand with  contour starting at a maximum.)}
Suppose that  Assumptions \ref{ma0} hold, with $\cc$  a contour from $z_0$ to $z_1$ in $\C$ where $z_0 \neq z_1$. Suppose  that
\begin{equation}\label{c1}
    \Re(p(z))<\Re(p(z_0)) \quad \text{for all} \quad z \in \cc, \ z\neq z_0.
\end{equation}
We may choose $k \in \Z$ so that the initial part of $\cc$ lies in the sector of angular width $2\pi/\mu$ about $z_0$ with bisecting angle $\theta_k$.
Then for every $S \in \Z_{\gqs 0}$, we have
\begin{equation} \label{wim}
    \int_\cc e^{N \cdot p(z)}  q(z) \, dz = e^{N \cdot p(z_0)} \left(\sum_{s=0}^{S-1}  \G\left(\frac{s+1}{\mu}\right) \frac{\alpha_s \cdot e^{2\pi i k (s+1)/\mu}}{N^{(s+1)/\mu}} + O\left(\frac{K_q}{N^{(S+1)/\mu}} \right)  \right)
\end{equation}
as $N \to \infty$ where  the implied constant in \eqref{wim} is independent of $N$ and $q$. The numbers $\alpha_s$
are given by
\begin{equation} \label{as}
    \alpha_s = \frac{1}{\mu \cdot  s!} p_0^{-(s+1)/\mu} \frac{d^s}{dz^s}\left\{q(z) \cdot \left( 1-\phi(z)\right)^{-(s+1)/\mu} \right\}_{z=z_0}.
\end{equation}.
\end{theorem}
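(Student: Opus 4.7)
The plan is Perron's classical strategy: localize the contour near $z_0$, apply a uniformising substitution that turns the exponent into a monomial, deform to the steepest-descent ray in direction $\theta_k$, and integrate a truncated Taylor series term by term against Gamma integrals. The main issue to track throughout is the explicit linear dependence of the remainder on $K_q$.

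First, I would split $\cc = \cc_1 \cup \cc_2$, where $\cc_1$ is a short initial arc about $z_0$. By \eqref{c1} and compactness, there is $\delta>0$ with $\Re(p(z)) \lqs \Re(p(z_0))-\delta$ on $\cc_2$, so the triangle inequality produces a contribution of size $O(K_q e^{N\Re p(z_0)-N\delta})$, which is exponentially smaller than the claimed error. By shrinking $\cc_1$ we may also assume $\cc_1 \subset \nb$ and that $\cc_1$ lies inside the sector of width $2\pi/\mu$ about $\theta_k$ prescribed in the statement.

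Next, introduce $u(z) := (z-z_0)(1-\phi(z))^{1/\mu}$, taking the branch of the root equal to $1$ at $z_0$. Then $u$ is holomorphic on $\nb$ with $u'(z_0)=1$, hence a local biholomorphism, and by \eqref{f} satisfies $p_0 u^\mu = p(z_0)-p(z)$. Writing $F(u) := q(z(u))\,z'(u)$ and letting $C_1$ be the image of $\cc_1$,
$$
\int_{\cc_1} e^{N p(z)} q(z)\,dz = e^{N p(z_0)} \int_{C_1} e^{-N p_0 u^\mu} F(u)\,du.
$$
Along $C_1\setminus\{0\}$ one has $\Re(p_0 u^\mu)>0$, so $C_1$ lies in the open positivity sub-sector $\{u:|\arg u-\theta_k|<\pi/(2\mu)\}$, and Cauchy's theorem permits deforming $C_1$ inside this sector to (a segment of) the ray $\rho_k = \{r e^{i\theta_k}: r \gqs 0\}$ at the cost of a small connecting arc at the outer end. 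On that arc and on the missing tail of $\rho_k$, $\Re(p_0 u^\mu)$ is bounded below by a positive constant, so they contribute $O(K_q\,e^{-cN})$ for some $c>0$.

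Finally, expand $F(u) = \sum_{n=0}^{S-1} \beta_n u^n + R_S(u)$. Cauchy's integral formula on a fixed disc around $0$ in $u$-space gives $|F| \ll K_q$ there, and hence $|R_S(u)| \ll K_q |u|^S$ with implied constants independent of $q$ and $N$. The substitution $t = N|p_0|r^\mu$ yields
$$
\int_{\rho_k} e^{-N p_0 u^\mu} u^n \, du = \frac{e^{i(n+1)\theta_k}}{\mu\,|p_0|^{(n+1)/\mu}\,N^{(n+1)/\mu}} \,\G\!\left(\frac{n+1}{\mu}\right),
$$
and the identity $|p_0|^{-(n+1)/\mu} e^{i(n+1)\theta_k} = p_0^{-(n+1)/\mu} e^{2\pi i k(n+1)/\mu}$ (from \eqref{bisec} and the principal branch convention) recasts each term in the form appearing in \eqref{wim}. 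To identify $\beta_n$ with the coefficient in \eqref{as}, I would integrate the identity $F(u)\,du = q(z)\,dz$ and apply Lagrange inversion (with $u = (z-z_0)\psi(z)$, $\psi(z) = (1-\phi(z))^{1/\mu}$, $\psi(z_0)=1$) to an antiderivative of $q$, reading off
$$
\beta_n = \frac{1}{n!}\frac{d^n}{dz^n}\left\{q(z)(1-\phi(z))^{-(n+1)/\mu}\right\}_{z=z_0}.
$$
The remainder contribution is then bounded by $K_q \int_{C_1} e^{-N\Re(p_0 u^\mu)}|u|^S\,|du| = O(K_q/N^{(S+1)/\mu})$ after the same deformation. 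The main obstacles I expect are the contour deformation to $\rho_k$ (especially controlling the outer arc uniformly in $N$) and the Lagrange inversion step identifying $\beta_n$ with \eqref{as}, both carried out so that the implied constants remain independent of $q$ as \eqref{wim} requires.
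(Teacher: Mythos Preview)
Your argument is correct and matches the paper's \emph{second} proof (Section~\ref{spf}) closely: both introduce the uniformiser $\tau = p_0^{1/\mu}(z-z_0)(1-\phi(z))^{1/\mu}$ (your $u$ differs only by the constant factor $p_0^{1/\mu}$), deform to the steepest-descent ray, and integrate a Taylor expansion against Gamma integrals. Two minor differences: the paper passes to the variable $v=\tau^\mu=p(z_0)-p(z)$ before integrating, whereas you stay in $u$; and for the coefficient identification the paper computes $\beta_s$ via Cauchy's differentiation formula (equations \eqref{junn}--\eqref{jun}), while your Lagrange--B\"urmann route through an antiderivative of $q$ is an equivalent residue calculation. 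It is worth knowing that the paper also gives a \emph{first} proof (Section~\ref{fp}), Perron's original, which avoids the inverse function theorem entirely: after reducing to the ray via Proposition~\ref{ilz}, it expands $q(z)e^{w\phi(z)}=\sum_s P_s(w)(z-z_0)^s$ with $w=Np_0(z-z_0)^\mu$, bounds the polynomials $P_s(w)$ by Cauchy's inequality, and recovers $\alpha_s$ from a binomial expansion of $(1-\phi)^{-(s+1)/\mu}$. That route trades the local biholomorphism and Lagrange inversion for a more elementary but slightly longer combinatorial argument.
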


To understand the geometry of the condition \eqref{c1}
 we first write
\begin{equation} \label{pexp}
    p(z)-p(z_0)=-\sum_{s=0}^\infty p_s(z-z_0)^{\mu+s} \qquad (z \in \nb).
\end{equation}
By Taylor's Theorem, for each $S$ there exists $K_{p,S}$ such that
\begin{equation} \label{tay}
    \left|p(z)-p(z_0)+\sum_{s=0}^{S-1} p_s(z-z_0)^{\mu+s}\right| \lqs K_{p,S} |z-z_0|^{\mu+S}
\end{equation}
for  all $z\in \nb$.
Write
\begin{equation} \label{letsx}
     p_s=|p_s| e^{i\omega_s} \quad \text{and} \quad z=z_0+r\cdot e^{i \theta}
\end{equation}
so that
 we obtain
\begin{equation} \label{cebx}
   \Re(p(z)-p(z_0))=-r^\mu \sum_{s=0}^\infty |p_s| r^s \cos \Bigl(\omega_s+(\mu+s)\theta\Bigr).
\end{equation}
Then \eqref{tay} and \eqref{cebx} imply that, for small $r$, $\Re( p(z)-p(z_0)) \approx -r^\mu |p_0|  \cos(\omega_0+\mu \theta)$. Hence, in a small neighborhood of $z_0$, the regions  where $\Re( p(z)-p(z_0))<0$ correspond approximately to $\mu$ sectors of angular width $\pi/\mu$. These `valleys' alternate with $\mu$ `hill' sectors, of the same size,  where $\Re( p(z)-p(z_0))>0$. The exact boundaries where $\Re( p(z)-p(z_0))=0$ will be differentiable curves, as we see in Section \ref{s3}. See Figure \ref{spokesfig} for an example with $\mu=3$.
\SpecialCoor
\psset{griddots=5,subgriddiv=0,gridlabels=0pt}
\psset{xunit=1cm, yunit=1cm}
\psset{linewidth=1pt}
\psset{dotsize=4pt 0,dotstyle=*}
\begin{figure}[ht]
\begin{center}
\begin{pspicture}(-3,-2.8)(3,2.9) 

\psset{arrowscale=2,arrowinset=0.5}


\pscustom[linecolor=lightblue,fillstyle=solid,fillcolor=lightblue]{
\psplot[linecolor=red]{0}{1}{x 57.3 mul tan 1.732 mul}
\psplot[linecolor=red]{-1}{0}{0 x 57.3 mul tan 1.732 mul sub}}

\pscustom[linecolor=lightblue,fillstyle=solid,fillcolor=lightblue]{
\psplot[linecolor=red]{0}{1}{0 x 57.3 mul tan 1.732 mul sub}
\psline(3,0)(0,0)}

\pscustom[linecolor=lightblue,fillstyle=solid,fillcolor=lightblue]{
\psplot[linecolor=red]{-1}{0}{x 57.3 mul tan 1.732 mul}
\psline(0,0)(-3,0)}

\psplot[linecolor=black]{-1}{1}{x 57.3 mul tan 1.732 mul}
\psplot[linecolor=black]{-1}{1}{0 x 57.3 mul tan 1.732 mul sub}

\psline(-3,0)(3,0)

\pscircle[linecolor=blue](0,0){2}

\psline[linecolor=gray,linestyle=dashed]{->}(0,0)(0,-2.8)
\psline[linecolor=gray,linestyle=dashed]{->}(0,0)(3,1.732)
\psline[linecolor=gray,linestyle=dashed]{->}(0,0)(-3,1.732)

\rput(0.4,-2.8){$\theta_2$}
\rput(-3.3,1.7){$\theta_1$}
\rput(3.3,1.7){$\theta_0$}
\rput(2.9,0.7){valley}
\rput(1.3,-0.8){hill}
\rput(-0.48,-0.27){$z_0$}
\rput(-2.2,-1.6){radius $R_p$}

\end{pspicture}
\caption{Hills and valleys near $z_0=0$ for $p(z)=i(z-\sin z)$}\label{spokesfig}
\end{center}
\end{figure}
In Proposition \ref{basic} we show it is possible to  choose  $R_p>0$ and small enough so that these boundary curves behave nicely in the disk of  radius $R_p$ about $z_0$,  approximating $2\mu$ regularly spaced spokes in a wheel.

The bisecting lines of the valley sectors are clearly  given by $z_0+r e^{i \theta}$ for $r\gqs 0$ and $\theta$ satisfying  $\cos(\omega_0+\mu \theta)=1$. These bisecting angles are the $\theta_\ell$ defined in \eqref{bisec} and  correspond to the directions of greatest decrease (steepest descent) of $\Re( p(z)-p(z_0))$.

The condition \eqref{c1} means that the initial part of $\cc$ must lie in one of the valley regions.  To specify which one, we use the fact that the part of this region within a distance $R_p$ from $z_0$
 must lie inside the sector of  angular width $2\pi/\mu$ about $z_0$ with bisecting angle $\theta_k$ for some $k\in \Z$. For the details of this see Section \ref{s3}.

  The proofs of Theorem \ref{il} we give in Sections \ref{fp} and \ref{spf} rely on the important simplification of Perron stated next and proved in Section \ref{s3}.

\begin{prop} \label{ilz}
Suppose  all the assumptions of Theorem \ref{il} are true. Let $b$ be the point on the bisecting line with angle $\theta_k$ that is a distance $R_p$ from $z_0$. Then there exists $\varepsilon>0$ so that
\begin{equation} \label{wimz}
    \int_\cc e^{N \cdot p(z)}  q(z) \, dz = e^{N \cdot p(z_0)} \left(\int_{z_0}^{b} e^{N (p(z)-p(z_0))}  q(z) \, dz  + O\left(K_q e^{-\varepsilon N} \right)  \right)
\end{equation}
as $N \to \infty$ where $\varepsilon$ and the implied constant in \eqref{wimz} are independent of $N$ and $q$.
\end{prop}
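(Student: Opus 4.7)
The plan is to apply Cauchy's theorem to deform the initial portion of $\cc$ into the straight segment $[z_0, b]$, with the deformation correction and the tail of $\cc$ both contributing only exponentially small error.

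First I would invoke Proposition \ref{basic} and the discussion of Section \ref{s3} to set up the local geometry near $z_0$: inside the disk $D_p$ of radius $R_p$ about $z_0$, the region $\{z : \Re(p(z)) < \Re(p(z_0))\}$ decomposes into $\mu$ disjoint, simply connected valleys, one of which --- call it $V_k$ --- contains the initial part of $\cc$ (this is precisely how $k$ was chosen). Let $t_P := \inf\{t > 0 : c(t) \in \partial D_p\}$ and set $P := c(t_P)$. Since the boundary of $V_k$ inside $D_p$ lies in the level set $\Re(p(z)) = \Re(p(z_0))$, the strict inequality \eqref{c1} forces $c([0, t_P])$ to remain in $V_k \cup \{z_0\}$; in particular $P \in V_k \cap \partial D_p$.

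Next, since $b$ also lies on the circular arc $V_k \cap \partial D_p$, I can pick a sub-arc $A$ of $\partial D_p$ from $b$ to $P$ lying entirely inside $V_k$. Simple connectivity of $V_k \cap D_p$ together with the holomorphicity of $p$ and $q$ there yield, via Cauchy's theorem,
\[
    \int_{c|_{[0,t_P]}} e^{N(p(z)-p(z_0))} q(z) \, dz \;=\; \int_{[z_0, b]} e^{N(p(z)-p(z_0))} q(z) \, dz \;+\; \int_A e^{N(p(z)-p(z_0))} q(z) \, dz,
\]
so the discrepancy $\int_\cc - \int_{[z_0,b]}$ of the scaled integrand $e^{N(p - p(z_0))} q$ reduces to $\int_A + \int_{c|_{[t_P, 1]}}$.

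To bound these two pieces, I would invoke compactness twice. The arc $A$ is compact, sits in $V_k \cap \partial D_p$, and on it $\Re(p(z) - p(z_0))$ is continuous and strictly negative, so it attains a maximum $-\delta_A < 0$. Similarly, the set $c([t_P, 1])$ is compact and disjoint from $z_0$, and \eqref{c1} together with continuity yields a uniform bound $\Re(p(z) - p(z_0)) \lqs -\delta_C < 0$ there. Multiplying the resulting pointwise exponential bound by the (fixed, finite) lengths of $A$ and $c([t_P, 1])$, and factoring out $K_q$, produces the asserted error $O(K_q e^{-\varepsilon N})$ with $\varepsilon := \min(\delta_A, \delta_C)$ and implied constant independent of $N$ and $q$. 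The main obstacle is topological rather than analytic: confirming that the initial sub-arc of $\cc$ is trapped in the single valley $V_k$ and that $V_k \cap D_p$ is simply connected, so that Cauchy's theorem actually applies. Both of these are the precise content of Proposition \ref{basic}, so once that geometric scaffolding is cited the remainder of the argument is a short compactness exercise.
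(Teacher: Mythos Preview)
Your argument is correct and matches the paper's own proof almost line for line: split $\cc$ at its first exit point $b_1$ from the disk of radius $R_p$, join $b$ to $b_1$ by a circular arc, invoke Cauchy's theorem, and bound the arc plus the tail of $\cc$ via compactness. Two small remarks: the paper also treats the case where $\cc$ never leaves the disk (your $t_P$ is then undefined), and since $p$ and $q$ are holomorphic on all of $\nb \supset D_p$ you only need the simple connectivity of the disk $D_p$ itself for Cauchy, so the extra topological scaffolding about $V_k$ being simply connected is not required.
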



The point $b$ is shown in Figure \ref{newpathfig}. It is clear from  Proposition \ref{ilz} that most details of the contour $\cc$ are irrelevant for our asymptotic results; we only need to know which sector the contour starts off in.

As a simple corollary to Theorem \ref{il}, the next result is obtained by breaking the
contour of integration into  $\int_\cc=\int_{z_0}^{z_2}-\int_{z_0}^{z_1}$. This may also be compared with Theorem 1 of \cite{LPS2009b}.

\begin{cor} \label{il2} {\rm (Perron's method for a holomorphic integrand with contour passing  through  a maximum.)}
Suppose Assumptions \ref{ma0} hold. Let $\cc$ be a contour starting at $z_1$, passing through $z_0$ and ending at  $z_2$,  with these three points all distinct.  Suppose that
\begin{equation}\label{c1x}
   \Re(p(z))<\Re(p(z_0)) \quad \text{for all} \quad z \in \cc, \ z\neq z_0.
\end{equation}
Let $\cc$ approach $z_0$ in the sector of angular width $2\pi/\mu$ about $z_0$ with bisecting angle $\theta_{k_1}$ and leave $z_0$ in a sector of the same size with bisecting angle $\theta_{k_2}$.
Then for every $S \in \Z_{\gqs 0}$, we have
\begin{equation} \label{wimab}
    \int_\cc e^{N \cdot p(z)}  q(z) \, dz = e^{N \cdot p(z_0)} \left(\sum_{s=0}^{S-1}  \G\left(\frac{s+1}{\mu}\right) \frac{\alpha_s ( e^{2\pi i k_2 (s+1)/\mu}- e^{2\pi i k_1 (s+1)/\mu})}{N^{(s+1)/\mu}} + O\left(\frac{K_q}{N^{(S+1)/\mu}} \right)  \right)
\end{equation}
as $N \to \infty$ where  the implied constant  is independent of $N$ and $q$.
 The numbers $\alpha_{s}$ are given by \eqref{as}.
\end{cor}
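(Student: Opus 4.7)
The plan is exactly the one hinted at in the sentence preceding the corollary: split $\cc$ at the interior point $z_0$ and reduce to two applications of Theorem \ref{il}. Concretely, I would decompose $\cc = \cc_1 \cup \cc_2$, where $\cc_1$ is the portion from $z_1$ to $z_0$ and $\cc_2$ is the portion from $z_0$ to $z_2$. Writing $\cc_1'$ for $\cc_1$ traversed in reverse (so $\cc_1'$ runs from $z_0$ to $z_1$), the additivity of contour integrals and the sign change under reversal give
\begin{equation*}
\int_\cc e^{N p(z)} q(z)\, dz = \int_{\cc_2} e^{N p(z)} q(z)\, dz - \int_{\cc_1'} e^{N p(z)} q(z)\, dz.
\end{equation*}

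Next I would verify that each of $\cc_2$ and $\cc_1'$ satisfies the hypotheses of Theorem \ref{il}. Both are contours of the kind described in Assumptions \ref{ma0}, each starts at $z_0$ and ends at a point distinct from $z_0$, and condition \eqref{c1} holds on each since it holds on all of $\cc$ by \eqref{c1x}. For the initial sector: by hypothesis the contour $\cc_2$ leaves $z_0$ in the sector bisected by $\theta_{k_2}$, so Theorem \ref{il} applies to it with $k = k_2$. For $\cc_1'$, the points on it just after $z_0$ coincide with the points on $\cc_1$ just before $z_0$, and the latter lie in the sector bisected by $\theta_{k_1}$ by hypothesis; hence $\cc_1'$ starts off in that sector, and Theorem \ref{il} applies to it with $k = k_1$.

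Finally I would apply Theorem \ref{il} to each of $\cc_2$ and $\cc_1'$ with the same $S$. The expansion coefficients $\alpha_s$ from \eqref{as} depend only on $p$, $q$, $z_0$, $\mu$ and $p_0$, which are common to both integrals; only the factor $e^{2\pi i k(s+1)/\mu}$ changes from one to the other. Subtracting the two expansions and combining the two error terms, each of which is $O(K_q/N^{(S+1)/\mu})$ with implied constant independent of $N$ and $q$, yields \eqref{wimab}.

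No step poses any real obstacle; the only item that requires a brief check is the sector assignment for the reversed piece $\cc_1'$, sketched above. Once the decomposition is made, the argument is pure bookkeeping on top of Theorem \ref{il}.
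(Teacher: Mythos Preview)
Your proposal is correct and matches the paper's own approach exactly: the paper states that the result is obtained by breaking the contour into $\int_\cc=\int_{z_0}^{z_2}-\int_{z_0}^{z_1}$ and applying Theorem \ref{il} to each piece. Your check on the sector assignment for the reversed arc $\cc_1'$ is the only thing the paper leaves implicit, and you handle it correctly.
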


We will see generalizations of these results in Section \ref{general}. In Section \ref{formul}, more explicit formulas for the numbers $\alpha_{s}$ are given.

Prior to \cite{Bu14} and \cite{Pe17}, different techniques to estimate integrals by moving the path of integration to a saddle-point were pioneered by Cauchy, Stokes, Riemann, Nekrasov, Kelvin and Debye.  See  for example \cite{Ol70}, \cite[pp. 104-105]{Ol}, \cite{Pet97} and \cite{Tem13} where their contributions   are described. These techniques include the method of steepest descent, and an advantage of Corollary \ref{il2} is that it does not require computing steepest descent paths.

\subsection{Burkhardt's heuristic}
Before proving the above results, we give Burkhardt's heuristic and show how the form of \eqref{wimab} arises.
Suppose $p'(z_0)=0$ and $p''(z_0)<0$. For simplicity we take $\cc=[-1,1]$ and $z_0=0$. Expanding $p(z)$ as in \eqref{pexp} with $p(z)=p(0)-p_0z^2-p_1z^3- \cdots$ and $q(z)=q_0+q_1 z+\cdots$ yields
\begin{equation*}
  \int_{-1}^1 e^{N \cdot p(z)}q(z)\, dz=  e^{N  \cdot p(0)}\int_{-1}^1 e^{-N p_0 z^2}
  e^{-N z^2(p_1 z+p_2 z^2+ \cdots)}
  (q_0+q_1 z+\cdots)\, dz
\end{equation*}
where we may write
\begin{equation*}
  e^{-N z^2(p_1 z+p_2 z^2+ \cdots)} = 1-N z^2(p_1 z+p_2 z^2+ \cdots)+\frac{(Nz^2)^2}{2}(p_1 z+p_2 z^2+ \cdots)^2+\cdots.
\end{equation*}
Since $p_0>0$ and $N>0$, the term $e^{-N p_0 z^2}$ will have exponential decay and so extending the path of integration to $\R$ should not affect the result.
Let $w=N p_0 z^2$ to obtain
\begin{equation} \label{burk}
    e^{N  \cdot p(0)}\int_{-\infty}^{\infty} e^{-w}
  \left(1-\frac{w}{p_0}(p_1 z+p_2 z^2+ \cdots)+\frac{w^2}{2p_0^2}(p_1 z+p_2 z^2+ \cdots)^2+\cdots \right)
  (q_0+q_1 z+\cdots)\, dz.
\end{equation}
By symmetry, the contributions from the odd powers of $z$ will cancel. From the $z^0$ term of \eqref{burk} we get the first term of the asymptotic expansion:
\begin{equation} \label{brk1}
  2 e^{N \cdot  p(0)}\int_{0}^{\infty} e^{-w}q_0 \frac{dw}{2(N p_0)^{1/2} w^{1/2}} = e^{N \cdot  p(0)}\frac{\G(1/2) q_0}{(N p_0)^{1/2}}.
\end{equation}
From the $z^2$ term of \eqref{burk} we get the next term of the expansion:
\begin{multline} \label{brk2}
  2 e^{N  \cdot p(0)}\int_{0}^{\infty} e^{-w}
  \left( q_2-\frac{w(p_1q_1+p_2q_0)}{p_0}+\frac{w^2 p_1^2 q_0}{2p_0^2} \right)z^2
  \frac{dw}{2(N  p_0)^{1/2} w^{1/2}} \\
  =  \frac{e^{N  \cdot p(0)}}{(N p_0)^{3/2}}\int_{0}^{\infty} e^{-w}
  \left( w q_2-\frac{w^2(p_1q_1+p_2q_0)}{p_0}+\frac{w^3 p_1^2 q_0}{2p_0^2} \right)
  \frac{dw}{ w^{1/2}} \\
   =  \frac{e^{N \cdot  p(0)}}{(N p_0)^{3/2}}
  \left( \G(3/2) q_2-\frac{\G(5/2)(p_1q_1+p_2q_0)}{p_0}+\frac{\G(7/2) p_1^2 q_0}{2p_0^2} \right).
\end{multline}
The formulas \eqref{brk1} and \eqref{brk2} will reappear in Section \ref{formul}.

\section{Preliminary results} \label{s3}
 This section is an elaboration of the paragraph in \cite{Pe17} before equation $(11)$ and gives a detailed description of $p(z)$ for $z$ near $z_0$.
\begin{prop} \label{basic}
Suppose $p(z)$ is holomorphic in a neighborhood $\nb$ of $z_0$.  As in Assumptions \ref{ma0}, we  assume $p(z)$ is not constant and  hence there must exist $\mu \in \Z_{\gqs 1}$ and $p_0 \in \C_{\neq 0}$ so that
\begin{equation}
    p(z)  =p(z_0)-p_0(z-z_0)^\mu(1-\phi(z)) \qquad  (z\in \nb) \label{fzx}
\end{equation}
with $\phi$  holomorphic on $\nb$ and $\phi(z_0)=0$. Then there exists $R_p>0$ so that the closed disk centered at $z_0$ of radius $R_p$  is contained in $\nb$ and we have  the following additional properties.
\begin{enumerate}
  \item All solutions to $\Re\bigl(p(z_0+r e^{i \theta})-p(z_0)\bigr)/r^\mu=0$ for $r \in [0,R_p]$ have the form $(r,\theta)=(r,f_\ell(r))$ for functions $f_\ell(r)$ with $\ell \in \Z$.
  \item These functions $f_\ell(r)$ are all defined on an interval containing $[0,R_p]$ and are differentiable.
  \item We have
  \begin{equation} \label{delel}
   f_\ell(0) = \delta_\ell \quad \text{for} \quad \delta_\ell:= -\frac{\omega_0}{\mu}+\frac{\pi(\ell+1/2)}{\mu}.
  \end{equation}
  \item Also $\left| f_\ell(r)-\delta_\ell\right| \lqs \pi/(4\mu)$ for $r \in [0,R_p]$.
\end{enumerate}
\end{prop}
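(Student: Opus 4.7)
The plan is to turn the zero-set of $\Re(p(z)-p(z_0))/r^\mu$ into the zero-set of a single smooth function $F(r,\theta)$ and then apply the implicit function theorem together with a monotonicity/IVT argument. Using \eqref{fzx} and the substitution $z=z_0+re^{i\theta}$, one checks
\[
\frac{\Re(p(z_0+re^{i\theta})-p(z_0))}{r^\mu}=-|p_0|\,F(r,\theta),
\qquad
F(r,\theta):=\Re\!\Bigl(e^{i(\omega_0+\mu\theta)}\bigl(1-\phi(z_0+re^{i\theta})\bigr)\Bigr),
\]
so I only need to study the zeros of $F$. Since $\phi(z_0)=0$, we have $F(0,\theta)=\cos(\omega_0+\mu\theta)$, whose zeros are exactly $\theta=\delta_\ell$ from \eqref{delel}. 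A short calculation gives
\[
\frac{\partial F}{\partial\theta}(0,\delta_\ell)=-\mu\sin(\omega_0+\mu\delta_\ell)=-\mu\,(-1)^\ell\neq 0.
\]

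Next I would fix $R_p>0$ small enough that three things hold simultaneously on the closed disk $\overline{D}(z_0,R_p)\subset\nb$. First, by the holomorphy of $\phi$ and $\phi(z_0)=0$, $|\phi(z_0+re^{i\theta})|\leq Cr$ for some $C>0$, which gives the pointwise bound $|F(r,\theta)-\cos(\omega_0+\mu\theta)|\leq Cr$. For $\theta$ outside every open window $(\delta_\ell-\pi/(4\mu),\delta_\ell+\pi/(4\mu))$ one has $|\cos(\omega_0+\mu\theta)|\geq \sqrt{2}/2$, so choosing $R_p<1/(C\sqrt{2})$ forces $|F(r,\theta)|>0$ outside these windows for all $r\in[0,R_p]$; this accounts for uniqueness, i.e. that there are no other solutions. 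Second, by uniform continuity of $\partial F/\partial\theta$ on the compact set $[0,R_p]\times[\delta_\ell-\pi/(4\mu),\delta_\ell+\pi/(4\mu)]$, shrinking $R_p$ further makes $\partial F/\partial\theta$ nonzero (in fact of constant sign $-(-1)^\ell$) on each such strip, giving strict monotonicity of $F(r,\cdot)$ there. Third, by the same logic, $F(r,\delta_\ell\pm\pi/(4\mu))$ stays close to $\cos(\omega_0+\mu\delta_\ell\pm\pi/4)=\pm(-1)^\ell\sqrt{2}/2$, hence keeps opposite signs at the two window endpoints for every $r\in[0,R_p]$. These are only $2\mu$ conditions modulo $2\pi$, so a single $R_p$ works.

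The IVT now produces, for each $r\in[0,R_p]$, a unique $f_\ell(r)$ in the window with $F(r,f_\ell(r))=0$, and monotonicity makes it unique there; combined with the first step, $\{(r,f_\ell(r))\}_{\ell\in\Z}$ exhausts the zero set. The bound $|f_\ell(r)-\delta_\ell|\leq\pi/(4\mu)$ follows by construction. Differentiability of $f_\ell$ on $[0,R_p]$ comes from the implicit function theorem applied at each point of the graph (since $\partial F/\partial\theta\neq 0$ there), and extendability to an open interval containing $[0,R_p]$ is handled by applying the implicit function theorem once more at $r=R_p$ inside a slightly larger neighborhood $\nb'\supset\overline{D}(z_0,R_p)$.

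The main obstacle is the uniformity: ensuring that a single $R_p$ simultaneously gives existence of $f_\ell$ on all of $[0,R_p]$, exhausts all zeros, and enforces the $\pi/(4\mu)$ bound. This is handled cleanly because the problem is $2\pi$-periodic in $\theta$ and the conditions on $\partial F/\partial\theta$ and on $|\cos(\omega_0+\mu\theta)|$ are verified on only $2\mu$ distinct strips, each requiring a standard compactness-and-continuity argument.
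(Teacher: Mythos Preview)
Your proposal is correct and follows essentially the same approach as the paper: both define the same function (the paper's $H$ is your $|p_0|\,F$), identify the zeros of $H(0,\theta)$ as the $\delta_\ell$, compute $\partial H/\partial\theta|_{(0,\delta_\ell)}=(-1)^{\ell+1}|p_0|\mu\neq 0$, invoke the implicit function theorem, and then use the $2\pi$-periodicity in $\theta$ to reduce to finitely many conditions when choosing $R_p$.

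The only notable difference is that you are more explicit than the paper about why \emph{all} zeros are accounted for (your first step, bounding $|F(r,\theta)-\cos(\omega_0+\mu\theta)|\leq Cr$ to rule out zeros outside the $\pi/(4\mu)$-windows), and you fold part~(iv) into the construction via the window width, whereas the paper obtains (iv) afterward by simply shrinking $R_p$ using the continuity of each $f_\ell$ at $r=0$. These are cosmetic differences; the underlying argument is the same.
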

\begin{proof}
Set $H(r,\theta):= -\Re\bigl(p(z_0+r e^{i \theta})-p(z_0)\bigr)/r^\mu$. By \eqref{cebx}
\begin{equation*}
    H(r,\theta) = \sum_{s=0}^\infty |p_s| r^s \cos \Bigl(\omega_s+(\mu+s)\theta\Bigr)
\end{equation*}
and so $H(0,\theta) = |p_0| \cos(\omega_0+\mu\theta)$. Then the  solutions to $H(0,\theta)=0$ are $\theta=\delta_\ell$ for $\ell \in \Z$ with $\delta_\ell$ defined in \eqref{delel}.

For $(r,\theta)$ in a neighborhood of $(0,\delta_\ell)$ the partial derivatives of $H(r,\theta)$ exist and are continuous. Also
\begin{equation*}
    \left.\frac{\partial H}{\partial \theta}\right|_{(r,\theta)=(0,\delta_\ell)} = -|p_0|\mu\sin(\omega_0+\mu\delta_\ell) = (-1)^{\ell+1}|p_0|\mu \neq 0.
\end{equation*}
Therefore, by the Implicit Function Theorem, all the solutions to $H(r,\theta)=0$ for $(r,\theta)$ in some neighborhood of $(0,\delta_\ell)$ take the form $(r,\theta)=(r,f_\ell(r))$ for differentiable functions $f_\ell$.
Note that $H(r,\theta+2\pi)=H(r,\theta)$ so that, for all $ \ell\in \Z$,
\begin{equation} \label{modf}
    f_{\ell+2\mu}(r) = f_\ell(r)+2\pi.
\end{equation}
We choose $R_p>0$ small enough so that the interval $[0,R_p]$ is contained in the above neighborhoods for all  $\ell \in \Z$. By \eqref{modf}, this choice involves only $2\mu$ conditions.
 We have proved parts (i), (ii) and (iii).

Suppose $\epsilon>0$ is given. Since $f_\ell(r)$ is continuous at $r=0$ we may decrease $R_p$ again, if necessary, to ensure that $|f_\ell(r) - f_\ell(0)|\lqs \epsilon$ for $r\in [0,R_p]$. We do this for each $\ell \bmod 2\mu$ and with $\epsilon = \pi/(4\mu)$. This proves part (iv).
\end{proof}

\begin{cor} \label{basic2}
Suppose all the assumptions of Proposition \ref{basic} hold. Then
\begin{equation}\label{repl>b}
    f_{2\ell-1}(r)<\theta_\ell<f_{2\ell}(r) \quad \text{for all} \quad r\in [0,R_p], \ \ell \in \Z.
\end{equation}
Also
\begin{equation}\label{repl>}
   \Re( p(z_0+r e^{i \theta_\ell})-p(z_0))<0 \quad \text{for all} \quad r\in (0,R_p], \ \ell \in \Z.
\end{equation}
Inequalities \eqref{repl>b} and \eqref{repl>} are special cases of the following.
For every $r\in (0,R_p]$ we have
\begin{equation} \label{rep>}
   \Re( p(z_0+r e^{i \theta})-p(z_0))<0
\end{equation}
if and only if $\theta$ satisfies $f_{2\ell-1}(r)<\theta<f_{2\ell}(r)$ for some $\ell \in \Z$.
\end{cor}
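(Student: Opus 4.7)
The key observation is that the bisecting angles $\theta_\ell$ sit exactly midway between consecutive $\delta_j$'s. Comparing \eqref{bisec} and \eqref{delel} gives $\delta_{2\ell-1}=\theta_\ell-\pi/(2\mu)$ and $\delta_{2\ell}=\theta_\ell+\pi/(2\mu)$. Combining these identities with part (iv) of Proposition \ref{basic} immediately yields
\begin{equation*}
f_{2\ell-1}(r) \lqs \theta_\ell-\tfrac{\pi}{4\mu} < \theta_\ell < \theta_\ell+\tfrac{\pi}{4\mu} \lqs f_{2\ell}(r) \qquad (r\in[0,R_p]),
\end{equation*}
which is \eqref{repl>b}. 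The same bounds show more generally that every $f_j(r)$ is trapped in the closed interval $[\delta_j-\pi/(4\mu),\delta_j+\pi/(4\mu)]$; since these intervals are pairwise disjoint and $\delta_{j+1}-\delta_j=\pi/\mu$, the curves are ordered $\cdots<f_{-1}(r)<f_0(r)<f_1(r)<\cdots$ for every fixed $r\in[0,R_p]$.

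To prove \eqref{repl>} I would revisit the continuous function $H(r,\theta)=-\Re\bigl(p(z_0+re^{i\theta})-p(z_0)\bigr)/r^\mu$ used in the proof of Proposition \ref{basic}, extended to $r=0$ by $H(0,\theta)=|p_0|\cos(\omega_0+\mu\theta)$. Because $\theta_\ell$ is at distance exactly $\pi/(2\mu)$ from its two nearest $\delta_j$'s (and farther from all others), while each $f_j(r)$ stays within $\pi/(4\mu)$ of $\delta_j$, we have $\theta_\ell\neq f_j(r)$ for all $j\in\Z$ and $r\in[0,R_p]$. By part (i) of Proposition \ref{basic} this forces $H(r,\theta_\ell)\neq 0$ throughout $[0,R_p]$. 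Since $[0,R_p]$ is connected and $H$ is continuous, the map $r\mapsto H(r,\theta_\ell)$ has constant sign; at $r=0$ it equals $|p_0|\cos(2\pi\ell)=|p_0|>0$, so $H(r,\theta_\ell)>0$ throughout. Multiplying by $-r^\mu$ yields \eqref{repl>}.

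For the equivalence \eqref{rep>}, fix $r\in(0,R_p]$. The ordering of the $f_j(r)$ noted above together with part (i) of Proposition \ref{basic} shows that these are exactly the zeros of the continuous function $\theta\mapsto H(r,\theta)$ on $\R$, so $H(r,\theta)$ has constant sign on each open interval $(f_j(r),f_{j+1}(r))$. The previous paragraph identifies that sign as positive on $(f_{2\ell-1}(r),f_{2\ell}(r))$ via the witness $\theta_\ell$. Repeating the argument with the point $\theta_\ell+\pi/\mu$, which lies midway between $\delta_{2\ell}$ and $\delta_{2\ell+1}$ and satisfies $H(0,\theta_\ell+\pi/\mu)=|p_0|\cos(2\pi\ell+\pi)=-|p_0|<0$, shows $H(r,\cdot)<0$ on the complementary intervals $(f_{2\ell}(r),f_{2\ell+1}(r))$. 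Hence $\Re\bigl(p(z_0+re^{i\theta})-p(z_0)\bigr)<0$ iff $H(r,\theta)>0$ iff $\theta\in(f_{2\ell-1}(r),f_{2\ell}(r))$ for some $\ell$, which is \eqref{rep>}.

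There is no real obstacle here beyond careful bookkeeping of angular constants; the crucial quantitative input is that the uniform bound $\pi/(4\mu)$ from Proposition \ref{basic}(iv) is strictly less than half the spacing $\pi/\mu$ between consecutive $\delta_j$'s, which simultaneously pins down the ordering of the $f_j$'s and keeps the witness points $\theta_\ell$ and $\theta_\ell+\pi/\mu$ at a positive distance from every $f_j(r)$.
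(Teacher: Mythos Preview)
Your proof is correct and follows essentially the same approach as the paper: both use Proposition~\ref{basic}(iii)--(iv) to locate $\theta_\ell$ strictly between $f_{2\ell-1}(r)$ and $f_{2\ell}(r)$, then determine the sign of $H(r,\theta_\ell)$ (and of $H(r,\theta_\ell+\pi/\mu)$) by a continuity argument anchored at $r=0$, and finally use the intermediate value theorem on each interval between consecutive $f_j(r)$'s. Your version is slightly more explicit about the ordering of all the $f_j(r)$'s and uses the continuous extension of $H$ to $r=0$ directly, whereas the paper phrases the same step via the small-$r$ approximation $\Re(p-p(z_0))\approx -r^\mu|p_0|\cos(\omega_0+\mu\theta)$; these are the same idea.
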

\begin{proof}
By Proposition \ref{basic}, part (iii) we have
\begin{equation*}
   f_{2\ell-1}(0) +\frac{\pi}{2\mu} = \theta_\ell =  f_{2\ell}(0) -\frac{\pi}{2\mu}.
\end{equation*}
Hence, with part (iv), it is clear that \eqref{repl>b} holds. Therefore $\Re( p(z_0+r e^{i \theta_\ell})-p(z_0))$ does not change sign for $r\in (0,R_p]$. Since
\begin{equation*}
  \Re( p(z_0+r e^{i \theta_\ell})-p(z_0)) \approx -r^\mu |p_0|  \cos(\omega_0+\mu \theta_\ell) = -r^\mu |p_0| <0
\end{equation*}
for small $r$ we obtain \eqref{repl>}. Similarly, along the directions of steepest ascent,
\begin{equation}\label{repl<}
   \Re( p(z_0+r e^{i (\theta_\ell+\pi/\mu)})-p(z_0))>0 \quad \text{for all} \quad r\in (0,R_p], \ \ell \in \Z.
\end{equation}

For fixed $r\in (0,R_p]$, consider $\Re( p(z_0+r e^{i \theta})-p(z_0))$ as a continuous function of $\theta$ with zeros only at $\theta=f_\ell(r)$ for $\ell \in \Z$. Therefore $\Re( p(z_0+r e^{i \theta})-p(z_0))$ is always positive or always negative for $f_{2\ell-1}(r)<\theta<f_{2\ell}(r)$. By \eqref{repl>b} and \eqref{repl>} it must be negative. Similarly, with \eqref{repl<}, it must be positive for $f_{2\ell}(r)<\theta<f_{2\ell+1}(r)$.
\end{proof}


\SpecialCoor
\psset{griddots=5,subgriddiv=0,gridlabels=0pt}
\psset{xunit=1.6cm, yunit=1.6cm, runit=1.6cm}
\psset{linewidth=1pt}
\psset{dotsize=5pt 0,dotstyle=*}

\begin{figure}[ht]
\begin{center}
\begin{pspicture}(-3,-0.5)(4.5,2) 

\psset{arrowscale=2,arrowinset=0.5}

\pscustom[linecolor=lightblue,fillstyle=solid,fillcolor=lightblue]{
\psplot[linecolor=red]{0}{0.87}{x 57.3 mul tan 1.732 mul}
\psplot[linecolor=red]{-0.87}{0}{0 x 57.3 mul tan 1.732 mul sub}}

\pscustom[linecolor=lightblue,fillstyle=solid,fillcolor=lightblue]{
\psplot[linecolor=red]{0}{0.3}{0 x 57.3 mul tan 1.732 mul sub}
\psline(3,0)(0,0)}

\pscustom[linecolor=lightblue,fillstyle=solid,fillcolor=lightblue]{
\psplot[linecolor=red]{-0.3}{0}{x 57.3 mul tan 1.732 mul}
\psline(0,0)(-3,0)}

\pspolygon[linecolor=lightblue,fillstyle=solid,fillcolor=lightblue](-3,0)(-0.3,0)(-0.3,-0.536)(-3,-0.536)
\pspolygon[linecolor=lightblue,fillstyle=solid,fillcolor=lightblue](4.5,0)(0.3,0)(0.3,-0.536)(4.5,-0.536)

\psplot[linecolor=gray]{-0.3}{0.87}{x 57.3 mul tan 1.732 mul}
\psplot[linecolor=gray]{-0.87}{0.3}{0 x 57.3 mul tan 1.732 mul sub}

\psline[linecolor=gray](-3,0)(4.5,0)

\psarc[linecolor=blue](0,0){2}{-15.5}{195.5}

\psline[linecolor=gray,linestyle=dashed](0,0)(0,-0.536)
\psline[linecolor=gray,linestyle=dashed]{->}(0,0)(3,1.732)
\psline[linecolor=gray,linestyle=dashed]{->}(0,0)(-3,1.732)

\pscurve[linecolor=black]{->}(0,0)(1,0.8)(1.965, 0.3675)(4,1)

\psdots(1.732,1)(1.965, 0.3675)(4,1)(0,0)
\rput(3.2,1.7){$\theta_k$}
\rput(3,0.76){$\cc$}
\rput(1.94,0.93){$b$}
\rput(2.16,0.22){$b_1$}
\rput(4.2,1){$z_1$}
\rput(-0.3,-0.13){$z_0$}
\rput(-2.45,0.55){radius $R_p$}

\end{pspicture}
\caption{Replacing $\cc$ by the line from $z_0$ to $b$}
\label{newpathfig}
\end{center}
\end{figure}

\begin{proof}[Proof of Proposition \ref{ilz}] If the contour $\cc$ is not contained in the disk of radius $R_p$ about $z_0$ then let $b_1$ be the first point of $\cc$ that is a distance $R_p$ from $z_0$, as shown in Figure \ref{newpathfig}. Let $\cc'$ be the contour from $b$ to $z_1$ that follows the circular arc about $z_0$ from $b$ to $b_1$. From $b_1$ the contour now follows $\cc$ to $z_1$. (If $\cc$ is  contained in the disk of radius $R_p$ about $z_0$ then $\cc'$ could move from $b$ to a point $b_0$  on the line between $z_0$ and $b$ that is the same distance as $z_1$ from $z_0$. It then follows the circular arc about $z_0$ from $b_0$ to $z_1$.)

Since the integrand is holomorphic, Cauchy's Theorem tells us that
\begin{equation*}
    \int_\cc e^{N \cdot p(z)}  q(z) \, dz = \left( \int_{z_0}^b + \int_{\cc'}\right) e^{N \cdot p(z)}  q(z) \, dz.
\end{equation*}
It is clear from Corollary \ref{basic2} and \eqref{c1} that $\Re(p(z)-p(z_0))<0$ for $z\in \cc'$. Hence there exists  $\varepsilon>0$, depending only on   $\cc$, $p(z)$ and $R_p$, such that $\Re(p(z)-p(z_0))\lqs -\varepsilon$ for all $z\in \cc'$.  Therefore
\begin{equation} \label{fft2}
    \left| \int_{\cc'}  e^{N (p(z)-p(z_0))} q(z)\, dz \right| \lqs e^{-\varepsilon N}  \int_{\cc'} | q(z)|\, |dz|
    \lqs K_q |\cc'|  e^{-\varepsilon N}
\end{equation}
where $|\cc'|$ is the length of $\cc'$ which is less than $R_p+R_p(\pi/\mu)+|\cc|$. This completes the proof of Proposition \ref{ilz}.
\end{proof}

Therefore Perron shows us that in finding the asymptotic expansion of \eqref{i(n)}, we may replace $\cc$ by the line from $z_0$ to $b$ as shown in Figure \ref{newpathfig}. This important step is emphasized in \cite{LPS2009b}. Theorem 4 on p. 105 of \cite{Wo89} (based on the corresponding result of \cite{Wy64}) is similar to Theorem \ref{il} but has the extra condition that there exists $\delta>0$ so that $|\arg(p(z_0)-p(z))|\lqs \pi/2-\delta$ for all $z\in \cc$. This condition seems to be caused by missing the step of Proposition \ref{ilz}. Olver  also comments in \cite{Ol70} that this condition  is unnecessary. (There are two further unnecessary conditions in \cite{Wo89}: that the initial part of $\cc$ may be deformed into a straight line and that the path $\cc$ leaves $z_0$ at a well-defined angle.)

\section{First proof of Theorem \ref{il}} \label{fp}

This proof of Theorem \ref{il} is based closely on Perron's original in \cite{Pe17} though including more detail. We follow Wyman \cite{Wy64} in bounding $P_s(w)$ in Lemma \ref{l1} using Cauchy's inequality. We also depart from Perron by bounding $Q_s(z)$ in Lemma \ref{pq} using the integral form of the remainder from Taylor's Theorem.


\SpecialCoor
\psset{griddots=5,subgriddiv=0,gridlabels=0pt}
\psset{xunit=1cm, yunit=1cm, runit=1cm}
\psset{linewidth=1pt}
\psset{dotsize=5pt 0,dotstyle=*}

\begin{figure}[ht]
\begin{center}
\begin{pspicture}(-5,-2)(5,3) 

\psset{arrowscale=2,arrowinset=0.5}

\psarc[linecolor=blue](0,0){2.7}{-47.7946}{227.795}
\pscircle[fillstyle=solid,linecolor=gray,fillcolor=light](0,0){2}

\psline[linecolor=gray,linestyle=dashed]{->}(0,0)(4,2.3094)
\psline{->}(0,0)(0.866, 0.5)


\psdots(2.33827, 1.35)(0.866, 0.5)(0,0)
\rput(4.4,2.3){$\theta_k$}
\rput(-0.9,1.3){$\nd$}
\rput(2.7,1.2){$b$}
\rput(1.166,0.35){$b'$}
\rput(-0.3,-0.13){$z_0$}
\rput(-3.4,-1){radius $R_p$}
\rput(-0.8,-1){radius $\rho$}

\end{pspicture}
\caption{The line from $z_0$ to $b'$ is the new path of integration}
\label{pf1fig}
\end{center}
\end{figure}

\begin{proof}[Proof of Theorem \ref{il}]
Let
\begin{equation*}
    \nd := \{z\in \C \ : \ |z-z_0| \lqs \rho\}
\end{equation*}
for  $\rho =R_p$ initially.
Since $\phi(z_0)=0$, there  exists $K_\phi>0$ such that
\begin{equation}\label{kphi}
    |\phi(z)| \lqs K_\phi |z-z_0| \quad \text{for all} \quad z \in \nd.
\end{equation}
Looking ahead to Lemma \ref{pq}, we decrease $\rho$, if necessary, to ensure that
\begin{equation}\label{krho}
    0 < \rho \lqs 1/(2K_\phi).
\end{equation}

By Proposition \ref{ilz} we only need to estimate the integral
\begin{equation*}
    \int_{z_0}^{b} e^{N (p(z)-p(z_0))}  q(z) \, dz
\end{equation*}
where $b$ is on the bisecting line with angle $\theta_k$ and a distance $R_p$ from $z_0$. It is convenient to change the end point to $b'$, on the same bisecting line and a distance $\rho/2$ from $z_0$. See Figure \ref{pf1fig}. By \eqref{repl>} there exists $\varepsilon'>0$ such that $\Re(p(z)-p(z_0))\lqs -\varepsilon'$ for $z$ on the line between $b'$ and $b$. Hence
\begin{equation} \label{cby}
    \int_{z_0}^{b'} e^{N (p(z)-p(z_0))}  q(z) \, dz = \int_{z_0}^{b} e^{N (p(z)-p(z_0))}  q(z) \, dz + O\left(K_q e^{-\varepsilon' N} \right).
\end{equation}

For any $w \in \C$ we have the Taylor expansion
\begin{equation*}
    q(z)e^{w \phi(z)}=\sum_{s=0}^\infty P_s(w) (z-z_0)^s \qquad (z \in \nd).
\end{equation*}
Since
\begin{equation*}
    q(z)e^{w \phi(z)}=q(z)\left(1+w \phi(z)+ w^2 \phi(z)^2/2!+ \cdots \right)
\end{equation*}
and $\phi(z_0)=0$, it follows that $P_s(w)$ is a polynomial and
\begin{equation}\label{polyP}
  P_s(w) = \sum_{\ell=0}^s c_{s,\ell} \cdot w^\ell
\end{equation}
where $c_{s,\ell}$ is the coefficient of $(z-z_0)^s$ in the Taylor expansion of $q(z)\phi(z)^\ell/\ell!$ about $z_0$. The following bound for $P_s(w)$ will be needed for the proof of Proposition \ref{min4}.

\begin{lemma} \label{l1}
For all $w\in \C$
\begin{equation*}
    |P_s(w)| \lqs  K_q e^{K_\phi}(|w|^s + \rho^{-s}).
\end{equation*}
\end{lemma}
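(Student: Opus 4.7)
The plan is to apply Cauchy's coefficient inequality to the function $f(z) := q(z) e^{w \phi(z)}$, which is holomorphic on $\nd$ for any fixed $w \in \C$. For any $r$ with $0 < r \leqslant \rho$, the closed disk of radius $r$ about $z_0$ is contained in $\nd$, so Cauchy's inequality gives
$$
|P_s(w)| \leqslant r^{-s} \max_{|z-z_0|=r} |q(z) e^{w \phi(z)}|.
$$
On the circle $|z-z_0|=r$ we have $|q(z)| \leqslant K_q$ and, by \eqref{kphi}, $|\phi(z)| \leqslant K_\phi r$, so $|e^{w\phi(z)}| \leqslant e^{|w| K_\phi r}$. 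Thus for every $r \in (0,\rho]$,
$$
|P_s(w)| \leqslant K_q \, r^{-s} \, e^{|w| K_\phi r}.
$$

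Now I would optimize the choice of $r$ in two cases. If $|w| \leqslant 1/\rho$, take $r = \rho$; then $|w| K_\phi r \leqslant K_\phi$ and the bound becomes $K_q e^{K_\phi} \rho^{-s}$. If instead $|w| > 1/\rho$, take $r = 1/|w|$, which satisfies $0 < r < \rho$; then $|w| K_\phi r = K_\phi$ and the bound becomes $K_q e^{K_\phi} |w|^s$. In either case the right-hand side is at most $K_q e^{K_\phi}(|w|^s + \rho^{-s})$, which is the desired inequality. The case $w=0$ is covered by the first case (or directly, since $P_s(0)$ is the $s$-th Taylor coefficient of $q$, bounded by $K_q \rho^{-s}$ via the same Cauchy estimate).

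There is no real obstacle here: the argument is a standard application of Cauchy's inequality combined with a balancing choice of radius. The only thing to be careful about is making sure that $r \leqslant \rho$ in the second case, which is exactly the condition $|w| > 1/\rho$, and making sure the linear bound \eqref{kphi} on $\phi$ is valid on the smaller circle — it is, since $\nd$ is a disk and the bound is uniform on $\nd$.
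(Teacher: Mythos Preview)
Your proof is correct and essentially identical to the paper's own proof: both apply Cauchy's inequality on a circle of radius $r\in(0,\rho]$ to get $|P_s(w)|\lqs K_q\, r^{-s} e^{K_\phi |w| r}$, then take $r=\rho$ when $|w|\lqs 1/\rho$ and $r=1/|w|$ when $|w|>1/\rho$.
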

\begin{proof}
Starting with Cauchy's inequality, \cite[p. 120]{Al}, we find that for every $r$ with $0<r\lqs \rho$,
\begin{align}
    |P_s(w)| & \lqs r^{-s} \max_{|z-z_0|=r} |q(z)e^{w \phi(z)}| \notag\\
    & \lqs K_q r^{-s}  \max_{|z-z_0|=r} e^{\Re(w \phi(z))} \notag \\
    & \lqs K_q r^{-s}   e^{K_\phi |w| r}. \label{pia}
\end{align}
If $|w| \lqs 1/\rho$ then letting $r=\rho$ in \eqref{pia} shows $|P_s(w)| \lqs K_q e^{K_\phi} \rho^{-s}$. If $|w| \gqs 1/\rho$ then letting $r=1/|w|$ in \eqref{pia} shows $|P_s(w)| \lqs K_q e^{K_\phi} |w|^s$.
\end{proof}

Now we take
\begin{equation}\label{ww}
    w=N p_0 (z-z_0)^\mu.
\end{equation}
It is an easy exercise to check that $w\gqs 0$ when $z$ is on the line between $z_0$ and $b'$. For these $z$ values
\begin{equation}\label{wwt}
  w^{1/\mu}=(N |p_0|)^{1/\mu}|z-z_0|.
\end{equation}

\begin{lemma} \label{pq}
With $w$ given by \eqref{ww}, and $z$ on the line between $z_0$ and $b'$, we have
\begin{equation} \label{gf}
    e^{N (p(z)-p(z_0))} q(z) = \sum_{s=0}^{S-1} e^{-w} P_s(w) (z-z_0)^{s} + Q_S(z)
\end{equation}
where
\begin{equation} \label{gf3}
    \left| Q_S(z) \right| \lqs \frac{2K_q}{\rho^{S}} |z-z_0|^{S} e^{-w/2}.
\end{equation}
\end{lemma}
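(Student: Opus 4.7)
The plan is to reduce everything to a Taylor remainder estimate for the auxiliary holomorphic function $g(\zeta) := q(\zeta)e^{w\phi(\zeta)}$. The factorization \eqref{fzx} gives
\[ N(p(z)-p(z_0)) = -Np_0(z-z_0)^\mu(1-\phi(z)) = -w + w\phi(z), \]
so that $e^{N(p(z)-p(z_0))}q(z) = e^{-w}\cdot g(z)$. Substituting the Taylor expansion $g(z)=\sum_{s=0}^\infty P_s(w)(z-z_0)^s$ (valid on $\nd$) and splitting off the first $S$ terms produces exactly the representation \eqref{gf}, with
\[ Q_S(z) = e^{-w}\Bigl(g(z) - \sum_{s=0}^{S-1}P_s(w)(z-z_0)^s\Bigr). \]

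For the bound I would use the integral form of Taylor's remainder, which via Cauchy's formula reads
\[ g(z) - \sum_{s=0}^{S-1}P_s(w)(z-z_0)^s = \frac{(z-z_0)^S}{2\pi i}\oint_{|\zeta-z_0|=\rho}\frac{g(\zeta)}{(\zeta-z_0)^S(\zeta-z)}\,d\zeta, \]
provided $|z-z_0|<\rho$. Since $z$ lies on the segment from $z_0$ to $b'$, the restriction $|z-z_0|\lqs \rho/2$ yields $|\zeta-z|\gqs \rho/2$ on the contour, producing a benign factor of $2$. On the circle $|\zeta-z_0|=\rho$ the bound \eqref{kphi} gives $|\phi(\zeta)|\lqs K_\phi\rho$, so $|g(\zeta)|\lqs K_q e^{K_\phi|w|\rho}$. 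The standard $ML$-estimate then gives
\[ \Bigl|g(z) - \sum_{s=0}^{S-1}P_s(w)(z-z_0)^s\Bigr| \lqs \frac{2K_q}{\rho^S}|z-z_0|^S e^{K_\phi|w|\rho}. \]

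Multiplying by $|e^{-w}|=e^{-w}$ (since $w\gqs 0$ along this segment by \eqref{wwt}) and invoking the key standing assumption \eqref{krho}, namely $K_\phi\rho\lqs 1/2$, the exponential factor collapses:
\[ e^{K_\phi w\rho - w} = e^{-w(1-K_\phi\rho)} \lqs e^{-w/2}, \]
which yields \eqref{gf3}. The only delicate point is the coordinated choice of constants: picking the Cauchy contour of radius exactly $\rho$ works because \eqref{krho} makes $K_\phi\rho$ small enough to be absorbed into a halving of the exponential, and the earlier replacement of $b$ by the shorter endpoint $b'$ in \eqref{cby} is precisely what secures $|z-z_0|\lqs \rho/2$ and hence the $\rho-|z-z_0|\gqs \rho/2$ bound in the denominator. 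Once those two inequalities are in place, the proof is a single application of Cauchy's formula.
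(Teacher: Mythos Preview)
Your proof is correct and follows essentially the same approach as the paper: both use the Cauchy integral form of the Taylor remainder on the circle $|\zeta-z_0|=\rho$, bound $|q(\zeta)e^{w\phi(\zeta)}|$ by $K_q e^{K_\phi w\rho}$ via \eqref{kphi}, exploit $|z-z_0|\lqs \rho/2$ to get $|\zeta-z|\gqs \rho/2$, and then invoke \eqref{krho} to collapse $e^{-w+K_\phi w\rho}$ to $e^{-w/2}$.
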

\begin{proof}
By Taylor's Theorem, see \cite[pp. 125-126]{Al},
\begin{equation*}
    q(z)e^{w \phi(z)}=\sum_{s=0}^{S-1} P_s(w) (z-z_0)^s+\frac{(z-z_0)^S}{2\pi i}\int_\g \frac{q(\tau)e^{w \phi(\tau)}}{(\tau-z_0)^S(\tau-z)}\, d\tau
\end{equation*}
where $\g$ is the positively oriented circle of radius $\rho$ about $z_0$. For $\tau \in \g$ we have $|q(\tau)e^{w \phi(\tau)}| \lqs K_q  e^{K_\phi w \rho}$. Also $|\tau-z|\gqs \rho/2$ since $|z-z_0| \lqs \rho/2$ by our choice of $b'$. The identity
\begin{equation*}
    e^{N (p(z)-p(z_0))} q(z) =  e^{-w} \cdot q(z)e^{w \phi(z)}
\end{equation*}
proves \eqref{gf} with
\begin{equation*} 
    \left| Q_S(z) \right| \lqs \frac{2K_q}{\rho^{S}} |z-z_0|^{S} e^{-w +K_\phi w \rho}.
\end{equation*}
The inequality  \eqref{krho} implies $\exp(-w +K_\phi w \rho)  \lqs \exp(-w/2 )$ and we obtain \eqref{gf3}.
\end{proof}

With  Proposition \ref{ilz}, \eqref{cby} and Lemma \ref{pq}
we may write
\begin{equation} \label{thre}
    \int_\cc e^{N \cdot p(z)} q(z) \, dz  = e^{N \cdot  p(z_0)}\left( \sum_{s=0}^{S-1} I_s(N) +  \int_{z_0}^{b'} Q_S(z) \, dz
    +   O\left(K_q e^{-\varepsilon N}\right)
    \right)
\end{equation}
for
\begin{equation} \label{isl}
     I_s(N) := \int_{z_0}^{b'} e^{-w} P_s(w) (z-z_0)^{s} \, dz
\end{equation}
(with $w$ given  by \eqref{ww}) and where $\varepsilon>0$ is independent of $N$ and $q$.

\begin{lemma} \label{min}
We have
\begin{equation*}
    \int_{z_0}^{b'} Q_S(z) \, dz =O\left(\frac{K_q}{N^{(S+1)/\mu}}\right).
\end{equation*}
\end{lemma}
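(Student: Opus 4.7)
The plan is to insert the bound \eqref{gf3} directly and reduce to a one-dimensional real integral by parameterizing the line segment from $z_0$ to $b'$. Since this segment lies along the steepest-descent ray with angle $\theta_k$, I write $z=z_0+r e^{i\theta_k}$ for $r\in[0,\rho/2]$, so $|z-z_0|=r$ and $|dz|=dr$.

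The next step is to verify that $w=Np_0(z-z_0)^\mu$ is real and nonnegative along this segment. By \eqref{bisec}, $\mu\theta_k=-\omega_0+2\pi k$, so $p_0 e^{i\mu\theta_k}=|p_0|e^{i\omega_0}e^{-i\omega_0}=|p_0|$, giving $w=N|p_0|r^\mu\ge 0$. Combining this with \eqref{gf3} yields
\begin{equation*}
\left|\int_{z_0}^{b'} Q_S(z)\,dz\right| \;\lqs\; \frac{2K_q}{\rho^S}\int_0^{\rho/2} r^S \exp\!\left(-\tfrac{1}{2}N|p_0|r^\mu\right)dr.
\end{equation*}

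Now I would evaluate the resulting integral by the substitution $u=\tfrac{1}{2}N|p_0|r^\mu$, which converts the right-hand side into an incomplete Gamma function:
\begin{equation*}
\int_0^{\rho/2} r^S \exp\!\left(-\tfrac{1}{2}N|p_0|r^\mu\right)dr \;=\; \frac{1}{\mu}\left(\frac{2}{N|p_0|}\right)^{(S+1)/\mu}\int_0^{U} u^{(S+1)/\mu-1}e^{-u}\,du,
\end{equation*}
with $U=\tfrac{1}{2}N|p_0|(\rho/2)^\mu$. Extending the upper limit to $\infty$ bounds this by $\Gamma((S+1)/\mu)/\mu$ times the prefactor, giving the claimed $O(K_q/N^{(S+1)/\mu})$ with an implied constant that depends only on $\mu$, $|p_0|$, $\rho$ and $S$ — in particular independent of $N$ and $q$, matching the dependence asserted in Theorem~\ref{il}.

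There is no real obstacle here; the only care needed is the sign check that the exponent is genuinely real and negative along the steepest-descent ray (so that we can apply the Gamma substitution without worrying about oscillation), and noting that the restriction to $b'$ (rather than $b$) was precisely arranged in \eqref{cby} so that $|z-z_0|\lqs\rho/2$ holds throughout the integration, legitimizing the use of \eqref{gf3} from Lemma~\ref{pq}.
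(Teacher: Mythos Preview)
Your proof is correct and follows essentially the same route as the paper: parameterize the segment by $z=z_0+re^{i\theta_k}$, apply the bound \eqref{gf3}, note that $w=N|p_0|r^\mu\gqs 0$ along this ray, and reduce to a Gamma integral via the substitution $u=\tfrac{1}{2}N|p_0|r^\mu$. The only cosmetic difference is that the paper records the nonnegativity of $w$ just before Lemma~\ref{pq} (near \eqref{wwt}) rather than inside the proof.
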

\begin{proof}
The absolute value of the left side is
\begin{align}
     \left| \int_0^{\rho/2} Q_S(t e^{i \theta_k}+z_0) e^{i \theta_k} \, dt \right|
    & \lqs \int_0^{\rho/2} \left| Q_S(t e^{i \theta_k}+z_0) \right| \, dt \notag\\
    & \lqs \frac{2K_q}{\rho^{S}}  \int_0^{\rho/2} \exp(- N |p_0| t^\mu/2) \cdot t^{S} \, dt. \label{icefr}
\end{align}
We used  inequality \eqref{gf3} in \eqref{icefr} and that $w=N |p_0| t^\mu $ when $z=t e^{i \theta_k}+z_0$.
With the change of variables $u= N |p_0| t^\mu/2$ and extending the range of integration to $\infty$ we obtain
\begin{equation*} \label{fft3}
    \left| \int_{z_0}^{b'} Q_S(z) \, dz \right| \lqs \left[\frac{2\G((S+1)/\mu)}{\mu \cdot \rho^{S} ( |p_0|/2)^{(S+1)/\mu}} \right]
    \frac{K_q}{N^{(S+1)/\mu}}. \qedhere
\end{equation*}
\end{proof}

Combining the errors  from \eqref{thre} and Lemma  \ref{min} shows
\begin{equation} \label{thre2}
    \int_\cc e^{N \cdot p(z)} q(z) \, dz = e^{N \cdot p(z_0)}\left( \sum_{s=0}^{S-1} I_s(N) +  O\left(\frac{K_q}{N^{(S+1)/\mu}}\right)
    \right)
\end{equation}
for an implied constant independent of $N$ and $q$.

\begin{lemma} \label{min3}
We have
\begin{equation*}
I_s(N) = \frac{e^{2\pi i k (s+1)/\mu}}{\mu \cdot(N p_0)^{(s+1)/\mu}} \int_0^{N|p_0|(\rho/2)^\mu} e^{-w} P_s(w) w^{(s+1)/\mu-1}\, dw.
\end{equation*}
\end{lemma}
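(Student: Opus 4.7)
The plan is to parameterize the line segment from $z_0$ to $b'$ and perform the substitution $w = N|p_0| t^\mu$ to reduce $I_s(N)$ to the stated real integral, being careful to track the phase factors and reconcile them with the principal-branch convention for $(Np_0)^{(s+1)/\mu}$.

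First I would write $z = z_0 + t e^{i\theta_k}$ for $t \in [0, \rho/2]$, so $dz = e^{i\theta_k}\, dt$ and $(z-z_0)^s = t^s e^{is\theta_k}$. Substituting into \eqref{isl} gives
\begin{equation*}
  I_s(N) = e^{i(s+1)\theta_k}\int_0^{\rho/2} e^{-w} P_s(w) t^s\, dt,
\end{equation*}
where $w = Np_0(z-z_0)^\mu = N p_0 t^\mu e^{i\mu\theta_k}$. Using $\theta_k = -\omega_0/\mu + 2\pi k/\mu$ from \eqref{bisec}, we get $\mu\theta_k \equiv -\omega_0 \pmod{2\pi}$, so $e^{i\mu\theta_k} = e^{-i\omega_0}$ and $p_0 e^{i\mu\theta_k} = |p_0|$. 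Hence $w = N|p_0| t^\mu \geqslant 0$ along this segment, in agreement with the observation following \eqref{wwt}.

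Next I would carry out the substitution $w = N|p_0| t^\mu$, giving $dw = \mu N|p_0| t^{\mu-1}\, dt$ and
\begin{equation*}
  t^s\, dt = \frac{t^{s-\mu+1}}{\mu N|p_0|}\, dw = \frac{w^{(s+1)/\mu - 1}}{\mu (N|p_0|)^{(s+1)/\mu}}\, dw.
\end{equation*}
The upper limit $t = \rho/2$ becomes $w = N|p_0|(\rho/2)^\mu$. Combining the phase prefactor $e^{i(s+1)\theta_k}$ with the $(N|p_0|)^{(s+1)/\mu}$ in the denominator gives the required form.

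The only subtle step is verifying that $e^{i(s+1)\theta_k} \cdot (N|p_0|)^{-(s+1)/\mu} = e^{2\pi i k(s+1)/\mu}\cdot (Np_0)^{-(s+1)/\mu}$ under the principal-value convention stated in Assumptions \ref{ma0}. Writing $e^{i(s+1)\theta_k} = e^{-i(s+1)\omega_0/\mu}\, e^{2\pi i k(s+1)/\mu}$ and using $\omega_0 = \arg(p_0) \in (-\pi,\pi]$, so that $p_0^{(s+1)/\mu} = |p_0|^{(s+1)/\mu} e^{i(s+1)\omega_0/\mu}$ by the principal branch, one gets $e^{-i(s+1)\omega_0/\mu}/(N|p_0|)^{(s+1)/\mu} = 1/(Np_0)^{(s+1)/\mu}$ (with $N>0$). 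This is the main place to be careful, but it is a direct calculation rather than an obstacle. Putting these pieces together yields the asserted formula for $I_s(N)$.
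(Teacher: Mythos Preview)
Your proof is correct and follows essentially the same approach as the paper: both arguments perform the change of variables from $z$ on the segment $[z_0,b']$ to $w = N p_0 (z-z_0)^\mu = N|p_0| t^\mu$, tracking the phase factors via $\theta_k = -\omega_0/\mu + 2\pi k/\mu$ to identify the prefactor $e^{2\pi i k(s+1)/\mu}/(N p_0)^{(s+1)/\mu}$ under the principal-branch convention. The paper phrases this as the identity $z-z_0 = w^{1/\mu}(N p_0)^{-1/\mu} e^{2\pi i k/\mu}$ (your relation raised to the $1/(s+1)$ power), while you go through the intermediate parameter $t$, but the content is the same.
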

\begin{proof}
Recall \eqref{ww}. First we claim that
\begin{equation} \label{z-a}
    z-z_0 = w^{1/\mu} (N p_0)^{-1/\mu} e^{2\pi i k/\mu}
\end{equation}
for $z$ on the line between $z_0$ and $b'$.  This follows from the definitions
\begin{equation*}
  p_0=|p_0|e^{i \omega_0}, \qquad \theta_k = -\frac{\omega_0}{\mu}+\frac{2\pi k}{\mu}, \qquad z-z_0=|z-z_0| e^{i \theta_k}
\end{equation*}
and the relation \eqref{wwt}.
The proof is completed by using \eqref{z-a} in \eqref{isl} to change the variable of integration to $w$. 
\end{proof}

\begin{prop} \label{min4}
There exists $\varepsilon''>0$ so that
\begin{multline} \label{int}
 \int_0^{N|p_0|(\rho/2)^\mu} e^{-w} P_s(w) w^{(s+1)/\mu-1}\, dw \\
 = \frac{\G((s+1)/\mu)}{s!}\frac{d^s}{dz^s}\left\{q(z) \left( 1-\phi(z)\right)^{-(s+1)/\mu)} \right\}_{z=z_0} + O(K_q e^{-\varepsilon'' N}).
\end{multline}
\end{prop}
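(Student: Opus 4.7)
The plan is to proceed in two stages: first extend the integration from $[0, N|p_0|(\rho/2)^\mu]$ to $[0,\infty)$ at the cost of an exponentially small error, then evaluate the resulting infinite integral in closed form and identify it with the Taylor coefficient appearing on the right-hand side.

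For the tail bound, set $M := N|p_0|(\rho/2)^\mu$ and split the integral as $\int_0^\infty - \int_M^\infty$. Using Lemma \ref{l1}, we have $|P_s(w)| \lqs K_q e^{K_\phi}(w^s + \rho^{-s})$ for $w\gqs 0$, hence
\begin{equation*}
  \int_M^\infty e^{-w} |P_s(w)|\, w^{(s+1)/\mu - 1}\, dw \lqs K_q e^{K_\phi} \int_M^\infty e^{-w}(w^s + \rho^{-s}) w^{(s+1)/\mu - 1}\, dw.
\end{equation*}
Splitting $e^{-w}=e^{-w/2}e^{-w/2}$, the first factor combined with the polynomial $w^{s+(s+1)/\mu-1}+\rho^{-s}w^{(s+1)/\mu-1}$ is bounded on $[M,\infty)$, and the remaining $e^{-w/2}$ integrated from $M$ to $\infty$ yields $2e^{-M/2}$. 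Since $M$ is linear in $N$, this gives the error $O(K_q e^{-\varepsilon'' N})$ with, say, $\varepsilon'' = |p_0|(\rho/2)^\mu/2$, and with the implied constant depending only on $s,\mu,\rho,K_\phi,|p_0|$ and in particular independent of $N$ and $q$.

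Next, insert the polynomial expansion $P_s(w) = \sum_{\ell=0}^s c_{s,\ell} w^\ell$ from \eqref{polyP} and use
\begin{equation*}
  \int_0^\infty e^{-w} w^{\ell + (s+1)/\mu - 1}\, dw = \G\left(\ell + \frac{s+1}{\mu}\right) = \left(\frac{s+1}{\mu}\right)_\ell \G\left(\frac{s+1}{\mu}\right),
\end{equation*}
where $(a)_\ell := a(a+1)\cdots(a+\ell-1)$ denotes the Pochhammer symbol and we invoke $\G(\ell+a)=(a)_\ell\G(a)$. This yields
\begin{equation*}
  \int_0^\infty e^{-w} P_s(w) w^{(s+1)/\mu - 1}\, dw = \G\left(\frac{s+1}{\mu}\right) \sum_{\ell=0}^s c_{s,\ell} \left(\frac{s+1}{\mu}\right)_\ell.
\end{equation*}

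Finally, to identify this sum with the derivative on the right-hand side, expand the binomial series
\begin{equation*}
  (1-\phi(z))^{-(s+1)/\mu} = \sum_{\ell=0}^\infty \frac{((s+1)/\mu)_\ell}{\ell!}\, \phi(z)^\ell,
\end{equation*}
which converges in some neighborhood of $z_0$ since $\phi(z_0) = 0$. By the definition of $c_{s,\ell}$ as the coefficient of $(z-z_0)^s$ in $q(z)\phi(z)^\ell/\ell!$, the coefficient of $(z-z_0)^s$ in the Taylor expansion of $q(z)(1-\phi(z))^{-(s+1)/\mu}$ about $z_0$ equals $\sum_{\ell=0}^\infty ((s+1)/\mu)_\ell c_{s,\ell}$. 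Because $\phi(z)^\ell$ vanishes to order at least $\ell$ at $z_0$, we have $c_{s,\ell}=0$ for $\ell>s$, so this sum truncates at $\ell=s$. Recognizing this Taylor coefficient as $\frac{1}{s!}\frac{d^s}{dz^s}\{\cdot\}_{z=z_0}$ completes the match with the main term of \eqref{int}.

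The main obstacle is the first step: the tail bound must be carried out cleanly so that $\varepsilon''$ and the implied constant are genuinely independent of both $N$ and $q$, which is where Lemma \ref{l1} (with its explicit $K_q$ dependence) is essential. The remaining algebra is routine once one notices the Pochhammer-Gamma relation and exploits the truncation $c_{s,\ell}=0$ for $\ell>s$ to pass between the finite and infinite sums.
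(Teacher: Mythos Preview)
Your proof is correct and follows essentially the same approach as the paper: split off the tail using Lemma \ref{l1} to get the $O(K_q e^{-\varepsilon'' N})$ error with the same $\varepsilon''=|p_0|(\rho/2)^\mu/2$, then evaluate $\int_0^\infty$ term by term via the Gamma integral and match with the Taylor coefficient of $q(z)(1-\phi(z))^{-(s+1)/\mu}$. The only cosmetic differences are that the paper bounds the tail using the explicit inequality $\int_T^\infty e^{-w}w^{d-1}\,dw\lqs 2^d\G(d)e^{-T/2}$ rather than your ``bounded times $e^{-w/2}$'' argument, and it writes the Gamma ratio via binomial coefficients $\binom{-(s+1)/\mu}{\ell}$ rather than Pochhammer symbols, extending the sum to infinity instead of invoking $c_{s,\ell}=0$ for $\ell>s$; these are equivalent formulations of the same computation.
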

\begin{proof}
Put $T:=N|p_0|(\rho/2)^\mu$  and write the integral in \eqref{int} as $\int_0^T=\int_0^\infty-\int_T^\infty$.
Employing Lemma \ref{l1}, we find
\begin{align}
    \left|\int_{T}^{\infty} e^{-w} P_s(w) w^{(s+1)/\mu-1}\, dw \right|
    & \lqs  K_q e^{K_\phi} \int_T^{\infty} e^{-w}(w^s+\rho^{-s}) w^{(s+1)/\mu-1} \,dw \notag\\
    & =  K_q e^{K_\phi}\left(\int_T^{\infty} e^{-w} w^{d-1} \,dw + \rho^{-s}\int_T^{\infty} e^{-w} w^{d'-1} \,dw \right) \label{rra}
\end{align}
for $d:=(s+1)/\mu+s$ and $d':=(s+1)/\mu$.
The estimate
\begin{equation}\label{gam}
    \int_T^{\infty} e^{-w} w^{d-1} \,dw \lqs 2^d \G(d) e^{-T/2} \qquad (T\gqs 0, d>0)
\end{equation}
follows from bounding $e^{-w}$ in the integrand by $e^{-T/2}e^{-w/2}$.
(More accurate estimates of the incomplete Gamma function are possible; see for example \cite[Eq. (2.02), p. 110]{Ol}.)
Hence \eqref{rra} is bounded by
\begin{equation*}
      K_q e^{K_\phi} \Bigl(2^d \G(d)+\rho^{-s}2^{d'} \G(d')\Bigr) e^{-T/2}.
\end{equation*}
We have shown that
\begin{equation*} \label{fft4}
    \left|\int_{T}^{\infty}  \right| = O( K_q  e^{-\varepsilon'' N})
\end{equation*}
for $\varepsilon''= |p_0|(\rho/2)^\mu/2$ and an implied constant independent of $N$ and $q$.

Lastly, we calculate  $\int_0^\infty$. Recalling \eqref{polyP},
\begin{equation}\label{ppo}
  \int_{0}^{\infty} e^{-w} P_s(w) w^{(s+1)/\mu-1}\, dw  = \sum_{\ell = 0}^s c_{s,\ell} \G\left( \frac{s+1}{\mu} + \ell \right)
\end{equation}
where $c_{s,\ell}$ is the coefficient of $(z-z_0)^s$ in the Taylor expansion of $q(z)\phi(z)^\ell/\ell!$. Therefore \eqref{ppo} is the coefficient of $(z-z_0)^s$ in
\begin{align}
  q(z)\sum_{\ell = 0}^s  \G\left( \frac{s+1}{\mu} + \ell \right)\frac{\phi(z)^\ell}{\ell!}  & =
  q(z) \G\left( \frac{s+1}{\mu} \right)\sum_{\ell = 0}^s  \frac{\G\left( \frac{s+1}{\mu} + \ell \right)}{\G\left( \frac{s+1}{\mu} \right)\ell!}(-1)^\ell  (-\phi(z))^\ell\notag\\
   & = q(z) \G\left( \frac{s+1}{\mu} \right)\sum_{\ell = 0}^s  \binom{-(s+1)/\mu}{\ell}(-\phi(z))^\ell. \label{ppoo}
\end{align}
Extending this sum to infinity will not affect the coefficient of $(z-z_0)^s$ and so we may replace \eqref{ppoo} by
\begin{equation*}
  q(z) \G\left( \frac{s+1}{\mu} \right)\sum_{\ell = 0}^\infty  \binom{-(s+1)/\mu}{\ell}(-\phi(z))^\ell
  = q(z) \G\left( \frac{s+1}{\mu} \right) (1-\phi(z))^{-(s+1)/\mu}.
\end{equation*}
This completes the proof of Proposition \ref{min4}.
\end{proof}
Our main Theorem \ref{il} now follows from \eqref{thre2}, Lemma \ref{min3} and Proposition \ref{min4}.
\end{proof}

\section{Second proof of Theorem \ref{il}} \label{spf}

This proof of Theorem \ref{il} is based on Olver's \cite[Thm. I]{Ol70} or equivalently \cite[Thm. 6.1, p. 125]{Ol}. Instead of employing the substitution $w=N p_0 (z-z_0)^\mu$, Olver uses $v=p(z_0)-p(z)$ as in the usual proofs of Laplace's method (see Section \ref{laplace}).

To get the result to match the statement of Theorem \ref{il}, we have to treat the branch factor $e^{2\pi i k/\mu}$ more explicitly than in \cite[Thm. 6.1, p. 125]{Ol}. The coefficients $\alpha_s$ naturally appear in a power series in this proof and we use a method inspired by the application of Cauchy's differentiation formula in \cite{CFW} to obtain Perron's expression for them.


\SpecialCoor
\psset{griddots=5,subgriddiv=0,gridlabels=0pt}
\psset{xunit=1cm, yunit=1cm, runit=1cm}
\psset{linewidth=1pt}
\psset{dotsize=5pt 0,dotstyle=*}

\begin{figure}[ht]
\begin{center}
\begin{pspicture}(-5,-2)(10,3) 

\psset{arrowscale=2,arrowinset=0.5}

\psarc[linecolor=blue](0,0){2.7}{-47.7946}{227.795}
\pscircle[linecolor=gray](0,0){2}
\pscircle[fillstyle=solid,fillcolor=light](0,0){1}

\psline[linecolor=gray,linestyle=dashed]{->}(0,0)(4,2.3094)
\psline[linestyle=dotted](0,1)(7,1)
\psline[linestyle=dotted](6.7868, -1.98477)(-0.142132, -0.989848)
\psline{->}(0,0)(0.866, 0.5)
\pscircle[fillstyle=solid,fillcolor=light](7,-0.5){1.5}

\psdots(2.33827, 1.35)(0.866, 0.5)(0,0)(7,-0.5)(0.4,-0.5)(7.4,-1.3)
\rput(4.4,2.3){$\theta_k$}
\rput(-0.9,1.4){$\nd$}
\rput(2.7,1.25){$b$}
\rput(1.166,0.35){$b'$}
\rput(0.1,-0.65){$z$}
\rput(-0.3,-0.15){$z_0$}
\rput(-3.4,-1){radius $R_p$}
\rput(0,-1.7){radius $\rho$}
\rput(6.7,-0.65){$0$}
\rput(7.1,-1.45){$\tau$}
\rput(4.2,0.5){$z=z_0+g(\tau)$}
\rput(4.2,-0.8){$\tau=\tau(z)$}
\rput(1.2,-0.6){$\nd_z$}
\rput(8.7,-1.3){$\nd_\tau$}

\psline{->}(4.7,0)(3.7, 0)
\psline{<-}(4.7,-1.3)(3.7, -1.3)

\end{pspicture}
\caption{The regions $\nd_z$ and $\nd_\tau$}
\label{pf2fig}
\end{center}
\end{figure}

\begin{proof}[Second proof of Theorem \ref{il}]
Let
\begin{equation*}
    \nd := \{z\in \C \ : \ |z-z_0| \lqs \rho\}
\end{equation*}
for  $\rho =R_p$, initially. As in \eqref{kphi}, \eqref{krho} we may decrease $\rho$ to ensure that
\begin{equation}\label{kphix}
    |\phi(z)| \lqs 1/2 \quad \text{for all} \quad z \in \nd.
\end{equation}

By Proposition \ref{ilz} we only need to estimate the integral
\begin{equation} \label{only}
    \int_{z_0}^{b} e^{N (p(z)-p(z_0))}  q(z) \, dz
\end{equation}
where $b$ is on the bisecting line with angle $\theta_k$ and a distance $R_p$ from $z_0$.
We will use  the change of variables $v:=p(z_0)-p(z)$ and,
to prepare for this, set
\begin{equation} \label{taudef}
    \tau = \tau(z) := p_0^{1/\mu}(z-z_0)(1-\phi(z))^{1/\mu}
\end{equation}
with all roots  principal. By \eqref{f} it is clear that $\tau$ is some $\mu$th root of $p(z_0)-p(z)$. We also see by \eqref{kphix} that $\tau$ is a holomorphic function of $z$ for $z$ in  $\mathcal D$. We have $\left.\frac{d\tau}{dz}\right|_{z=z_0} = p_0^{1/\mu} \neq 0$ and consequently, by the Inverse Function Theorem for holomorphic functions, there exists a neighborhood $\mathcal D_\tau$ of $0$ so that $z$ is a holomorphic function of $\tau$ there:
\begin{equation*}
    z-z_0=g(\tau):=\sum_{s=1}^\infty c_s \tau^s \qquad (\tau \in \mathcal D_\tau).
\end{equation*}
Choose $\mathcal D_\tau$ to be a disk centered at $0$ and small enough that the image $\mathcal D_z:=z_0+g(\mathcal D_\tau)$ is contained in $\mathcal D$. See Figure \ref{pf2fig}, ($\mathcal D_z$ may not be a disk).
Since
\begin{equation*}
     p'(z)=-\sum_{s=0}^\infty (s+\mu) p_s (z-z_0)^{s+\mu-1}
\end{equation*}
 we have
\begin{equation} \label{mur}
    -\frac{q(z)}{p'(z)}=-\frac{q(z_0+g(\tau))}{p'(z_0+g(\tau))} =: F(\tau) = \sum_{s=0}^\infty \beta_s \tau^{s -\mu+1} \qquad (z\in \mathcal D_z, \tau \in \mathcal D_\tau).
\end{equation}
Shrink $\mathcal D_\tau$ (and correspondingly $\mathcal D_z$) if necessary so that  $\tau^{\mu-1} F(\tau)$ is holomorphic on $\mathcal D_\tau$; we are avoiding any zeros of $p'(z)$ away from $z=z_0$. Taylor's theorem implies there exist constants $K_{F,S}$ such that
\begin{equation}\label{kfs}
    \left| \tau^{\mu-1} F(\tau) - \sum_{s=0}^{S-1} \beta_s \tau^s\right| \lqs K_{F,S} |\tau|^S \qquad (\tau \in \mathcal D_\tau, S \in \Z_{\gqs 0}).
\end{equation}
To understand the dependence of $K_{F,S}$ on $q$ we may write the remainder term explicitly as
\begin{equation*}
    \tau^{\mu-1} F(\tau) = \sum_{s=0}^{S-1} \beta_s \tau^s +\frac{\tau^S}{2\pi i} \int_{C_0} \frac{w^{\mu-1} F(w)}{w^S(w-\tau)}\, dw
\end{equation*}
with $C_0$ the boundary of $\mathcal D_\tau$, oriented positively. Since
\begin{equation}\label{tenn}
    \frac{1}{2\pi i} \int_{C_0} \frac{w^{\mu-1} F(w)}{w^S(w-\tau)}\, dw = -\frac{1}{2\pi i} \int_{C_0} \frac{q(z_0+g(w))}{p'(z_0+g(w)) \cdot w^{S-\mu+1}(w-\tau)}\, dw
\end{equation}
and $|q(z_0+g(w))|\lqs K_q$ on the right of \eqref{tenn}, we may write $K_{F,S}=K^*_{F,S}\cdot K_q$ with $K^*_{F,S}$ independent of $q$. For these estimates we have shrunk $\mathcal D_\tau$ (and  $\mathcal D_z$) again, for example to half their size, so that $w-\tau$ in \eqref{tenn} is bounded away from zero for $w\in C_0$ and $\tau \in \mathcal D_\tau$.

\begin{lemma} \label{fca}
For all $z \in  \mathcal D_z$ with $z$ also on the line between $z_0$ and $b$, we have
\begin{equation}\label{tau}
    \tau(z) = e^{2\pi i k/\mu} \bigl(p(z_0)-p(z)\bigr)^{1/\mu}.
\end{equation}
\end{lemma}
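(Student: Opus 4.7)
The plan is a direct computation of both sides using principal-branch conventions. Raising $\tau(z)$ to the $\mu$-th power in \eqref{taudef} and applying \eqref{f} immediately gives $\tau(z)^\mu=p_0(z-z_0)^\mu(1-\phi(z))=p(z_0)-p(z)$. Thus $\tau(z)$ and $e^{2\pi i k/\mu}\bigl(p(z_0)-p(z)\bigr)^{1/\mu}$ are both $\mu$-th roots of the same quantity, and the task reduces to checking that they coincide as roots along the segment in question.

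I would parameterize the line from $z_0$ to $b$ as $z=z_0+re^{i\theta_k}$ for $r\in[0,R_p]$, dispatching the trivial case $r=0$ separately. Since $\omega_0=\arg(p_0)\in(-\pi,\pi]$, the principal root is $p_0^{1/\mu}=|p_0|^{1/\mu}e^{i\omega_0/\mu}$. By \eqref{kphix} we have $|\phi(z)|\lqs 1/2$, so $1-\phi(z)$ sits strictly in the right half-plane and $(1-\phi(z))^{1/\mu}$ is an unambiguous principal root with argument in $(-\pi/(2\mu),\pi/(2\mu))$. Expanding \eqref{taudef} and using $\theta_k=-\omega_0/\mu+2\pi k/\mu$ from \eqref{bisec}, the $\omega_0/\mu$ and $-\omega_0/\mu$ contributions cancel and one reads off $\tau(z)=e^{2\pi i k/\mu}|p_0|^{1/\mu}r\,(1-\phi(z))^{1/\mu}$.

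For the right-hand side, $p(z_0)-p(z)=|p_0|r^\mu e^{i(\omega_0+\mu\theta_k)}(1-\phi(z))=|p_0|r^\mu(1-\phi(z))$ since $\omega_0+\mu\theta_k=2\pi k\in 2\pi\Z$. This is a positive real multiple of $1-\phi(z)$, hence also lies in the right half-plane, so its principal $\mu$-th root factors cleanly as $|p_0|^{1/\mu}r\,(1-\phi(z))^{1/\mu}$. Multiplying by $e^{2\pi i k/\mu}$ yields exactly the expression for $\tau(z)$ derived above, establishing \eqref{tau}. The only (minor) obstacle is the standard subtlety that a principal $\mu$-th root of a product equals the product of principal roots only when no factor crosses the branch cut; here \eqref{kphix} together with $|p_0|r^\mu>0$ are precisely what keep every factor in the right half-plane.
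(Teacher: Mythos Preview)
Your proof is correct and follows essentially the same approach as the paper: both are direct computations exploiting $\theta_k=-\omega_0/\mu+2\pi k/\mu$ and the principal-branch conventions. The paper inserts the identity $(1-\phi(z))^{1/\mu}=(p_0(z-z_0)^\mu)^{-1/\mu}(p(z_0)-p(z))^{1/\mu}$ and then simplifies the modulus and argument, whereas you compute $\tau(z)$ and $(p(z_0)-p(z))^{1/\mu}$ separately in polar form and match them; your explicit justification via \eqref{kphix} that $1-\phi(z)$ (and hence $p(z_0)-p(z)$) lies in the right half-plane is a nice touch that makes the branch bookkeeping more transparent.
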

\begin{proof}
Recall that $\arg(p_0)=\omega_0$ and $\arg(z-z_0)=\theta_k$. Hence  $\arg(p_0(z-z_0)^\mu)=0$ and so
\begin{align*}
  \tau(z)  & := p_0^{1/\mu}(z-z_0)\bigl(1-\phi(z)\bigr)^{1/\mu}  \\
   & =   p_0^{1/\mu}(z-z_0)\bigl(p_0(z-z_0)^\mu\bigr)^{-1/\mu} \bigl(p(z_0)-p(z)\bigr)^{1/\mu}\\
   & =  e^{i \omega_0/\mu}\cdot e^{i \theta_k} \left| p_0^{1/\mu}(z-z_0)\bigl(p_0(z-z_0)^\mu \bigr)^{-1/\mu} \right| \bigl(p(z_0)-p(z)\bigr)^{1/\mu}\\
   & = e^{2\pi i k/\mu} \bigl(p(z_0)-p(z)\bigr)^{1/\mu}
\end{align*}
as desired.
\end{proof}

Fix $b'$ on the line between $z_0$ and $b$ so that the segment from $z_0$ to $b'$ is contained in $\mathcal D_z$.  Hence Lemma \ref{fca} shows that we have
\begin{equation*}
    v:=p(z_0)-p(z), \quad \tau=e^{2\pi i k/\mu} v^{1/\mu}, \quad v=\tau^\mu
\end{equation*}
for all $z$ and $\tau(z)$ where $z$ is on the line between $z_0$ and $b'$.

To estimate \eqref{only} we see first that $\int_{b'}^b$ is $O(K_q e^{-\varepsilon' N})$ as in \eqref{cby}.
Using $\frac{dv}{dz}=-p'(z)$ and \eqref{mur} we find
\begin{align}
    \int_{z_0}^{b'}   e^{N (p(z)-p(z_0))}  q(z) \, dz & = \int_0^{p(z_0)-p(b')}   e^{-N v}  q(z) \cdot \frac{dz}{dv} \, dv \notag\\
    & = -\int_0^{p(z_0)-p(b')}   e^{-N v}   \frac{q(z)}{p'(z)} \, dv\notag\\
    & = \int_0^{p(z_0)-p(b')}   e^{-N v}    F(e^{2\pi i k/\mu} v^{1/\mu}) \, dv. \label{use}
\end{align}
The contour of integration in \eqref{use} is the image of the line between $z_0$ and $b'$ in the $v$-plane. Except for the starting point, this contour is contained in the half-plane with positive real part by \eqref{c1}. The principal root $v^{1/\mu}$ is holomorphic in this half-plane and therefore the integrand in \eqref{use} is holomorphic there too.  Set $w:=p(z_0)-p(b')$. By Cauchy's Theorem we may change the contour of integration to  the straight line from $0$ to $w$. (The integrand may have a singularity at $v=0$, but it is $\ll |v|^{1/\mu -1}$  for $|v|$ small, and so moving the path of integration near $0$ may be justified.) Employing \eqref{kfs} yields
\begin{equation} \label{berhi}
    \int_0^{w}   e^{-N v}    F(e^{2\pi i k/\mu} v^{1/\mu}) \, dv =  \sum_{s=0}^{S-1} \beta_s e^{2\pi i k(s+1)/\mu}
    \int_0^{w}   e^{-N v}     v^{(s+1)/\mu-1} \, dv +E_S
\end{equation}
with
\begin{align}
|E_S| & \lqs K_{F,S} \int_0^{w}   \left| e^{-N v}\right|      |v|^{(S+1)/\mu-1} \, |dv| \notag\\
     & = K_{F,S} \int_0^{|w|}   e^{-N \Re(w/|w|) t}     t^{(S+1)/\mu-1} \, dt  \notag\\
     & \lqs K_{F,S} \G\left(\frac{S+1}{\mu}\right) \left(N \Re\left(\frac{w}{|w|}\right)\right)^{-(S+1)/\mu} \label{port}
\end{align}
on extending the limit of integration to infinity. The next lemma estimates the integral in \eqref{berhi}.

\begin{lemma} \label{jna}
Suppose $N,$ $r,$ $\varepsilon >0$ and $\Re(w)\gqs \varepsilon$. For an implied constant depending only on $r$ and $w$ we have
\begin{equation*}
    \int_0^{w}   e^{-N v}     v^{r-1} \, dv = N^{-r}\left(\G(r)+O\left( e^{-\varepsilon N/2}\right) \right).
\end{equation*}
\end{lemma}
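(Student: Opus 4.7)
The plan is to reduce the estimate to a tail bound on the Gamma integral via a change of variables followed by a contour deformation. First I would substitute $u = N v$, noting that since $N > 0$ we have $v^{r-1} = N^{1-r} u^{r-1}$ with principal branches, obtaining
\[
\int_0^w e^{-N v} v^{r-1}\, dv = N^{-r} \int_0^{Nw} e^{-u} u^{r-1}\, du,
\]
where the integral on the right is taken along the straight segment from $0$ to $Nw$ in the complex $u$-plane.

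Next I would use Cauchy's theorem to shift this contour. Since $\Re(Nw) \gqs \varepsilon N > 0$, the segment from $0$ to $Nw$ and the positive real ray from $0$ to $\infty$ both lie in the closed right half-plane, where $e^{-u} u^{r-1}$ is holomorphic (with principal branch) except at $u = 0$. Using a rectangular contour with vertices $0$, $Nw$, $T + i\Im(Nw)$, $T$, and letting $T \to \infty$ (the vertical piece at $T$ vanishes thanks to the $e^{-T}$ factor, and any small arc about $u=0$ contributes $O(\delta^{r})$ which vanishes as $\delta \to 0$ since $r > 0$), I would obtain
\[
\int_0^{Nw} e^{-u} u^{r-1}\, du = \G(r) - \int_{Nw}^{\infty} e^{-u} u^{r-1}\, du,
\]
where the tail integral runs along the horizontal ray $u = Nw + s$ with $s \in [0,\infty)$.

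Finally I would estimate this tail. Parameterizing as above gives $|e^{-u}| = e^{-N \Re(w) - s} \lqs e^{-\varepsilon N - s}$. For the factor $|u^{r-1}| = |Nw + s|^{r-1}$, split into cases: if $r \gqs 1$ use $|Nw + s|^{r-1} \lqs (N|w| + s)^{r-1} \lqs 2^{r-1}\bigl((N|w|)^{r-1} + s^{r-1}\bigr)$, while for $0 < r < 1$ use $|Nw + s|^{r-1} \lqs (N\Re(w))^{r-1} \lqs (\varepsilon N)^{r-1}$. Either way, after integrating $\int_0^\infty e^{-s} (\cdot)\, ds$, the tail is bounded by $e^{-\varepsilon N}$ times a polynomial in $N$ whose degree and coefficients depend only on $r$ and $w$. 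Since $e^{-\varepsilon N} \cdot N^{r-1} = O(e^{-\varepsilon N/2})$ as $N \to \infty$, the claimed bound $O(e^{-\varepsilon N/2})$ follows.

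The main obstacle, if any, is purely bookkeeping: justifying the contour deformation past the (integrable) branch point at $u=0$ when $r < 1$ via a vanishing keyhole argument, and absorbing the polynomial factor into $e^{-\varepsilon N/2}$ in the case $r \gqs 1$. Both are routine, so the lemma should follow immediately from the outlined substitution, contour shift, and direct exponential estimate.
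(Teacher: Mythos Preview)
Your proof is correct and follows essentially the same approach as the paper: write the integral as $\Gamma(r)$ minus a tail via contour deformation, then bound the tail exponentially. The only cosmetic difference is that the paper extends the contour along the ray through $w$ (parameterizing $v=wt$ for $t\gqs 1$) rather than horizontally, and bounds the tail directly via the incomplete-Gamma estimate $\int_T^\infty e^{-u}u^{r-1}\,du \lqs 2^r\Gamma(r)e^{-T/2}$.
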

\begin{proof}
Continue the line of integration to $w\infty$ and write $\int_0^{w} = \int_0^{w\infty}-\int_{w}^{w\infty}$. The integral $\int_0^{w\infty}$ is computed by rotating the line of integration to $\R_{\gqs 0}$ which is straightforward to justify:
\begin{equation*}
    \int_0^{w\infty}   e^{-N v}     v^{r-1} \, dv = N^{-r}\G(r).
\end{equation*}
The absolute value of  $\int_{w}^{w\infty}$ is bounded by
\begin{align*}
    |w|^r \left| \int_1^\infty e^{-Nwt}t^{r-1}\, dt \right| & \lqs |w|^r  \int_1^\infty e^{-N\varepsilon t}t^{r-1}\, dt\\
    & = \frac{|w|^r}{(N\varepsilon)^r}  \int_{N\varepsilon}^\infty e^{-u}u^{r-1}\, du\\
     & \lqs \left(\frac{2|w|}{\varepsilon} \right)^{r} \G(r) N^{-r} e^{-\varepsilon N/2}
\end{align*}
where the last line used \eqref{gam}.
\end{proof}
We have shown so far, with \eqref{use}, \eqref{berhi}, \eqref{port} and with Lemma \ref{jna} applied to \eqref{berhi}, that
\begin{equation*}
    \int_{z_0}^{b'}   e^{N (p(z)-p(z_0))}  q(z) \, dz =  \sum_{s=0}^{S-1} \beta_s e^{2\pi i k(s+1)/\mu} \frac{\G((s+1)/\mu)}{N^{(s+1)/\mu}} +E^*_S
\end{equation*}
where
\begin{equation} \label{estr}
    E^*_S \ll  \frac{K_{F,S}}{N^{(S+1)/\mu}} +\left(\sum_{s=0}^{S-1} |\beta_s|\right) e^{-\varepsilon N/2}
\end{equation}
for an implied constant independent of $N$ and $q$. A similar argument to the one after \eqref{tenn}, showing that $K_{F,S}/K_q$ may be bounded independently of $q$,  shows that $|\beta_s|/K_q$ is also independent of $q$ since
\begin{align}
    \beta_s & = \frac{1}{2\pi i} \int_{C_0} \frac{\tau^{\mu-1} F(\tau)}{\tau^{s+1}} \, d\tau \notag\\
     & = -\frac{1}{2\pi i} \int_{C_0} \frac{q(z_0+g(\tau))}{p'(z_0+g(\tau)) \cdot \tau^{s-\mu+2}} \, d\tau. \label{dgf}
\end{align}
We have already seen that integral $\int_{b'}^b$ has exponential decay in $N$,  and so may be included in the error term \eqref{estr}.
Consequently
\begin{equation}\label{icax}
     \int_{\cc} e^{N \cdot p(z)} q(z) \, dz = e^{N \cdot p(z_0)} \left(\sum_{s=0}^{S-1} \G\left(\frac{s+1}{\mu}\right)  \frac{\beta_s \cdot e^{2\pi i k(s+1)/\mu}}{N^{(s+1)/\mu}}+
     O\left( \frac{K_q}{N^{(S+1)/\mu}}\right)
     \right)
\end{equation}
as desired.

It only remains to compute the numbers $\beta_s$. A change of variables in \eqref{dgf} shows
\begin{equation*}
    \beta_s = -\frac{1}{2\pi i} \int_{C_{z_0}} \frac{q(z)}{p'(z) \cdot \tau^{s-\mu+2}} \frac{d \tau}{d z}\, dz
\end{equation*}
for  $C_{z_0} \subset \mathcal D_z$  a positively oriented circle centered at $z_0$.
Use \eqref{f} and \eqref{taudef} to show that
\begin{equation*}
    \frac{d \tau}{d z} = -\frac{1}{\mu} \tau^{1-\mu}p'(z).
\end{equation*}
Hence
\begin{align}
    \beta_s & = \frac{1}{2\pi i \cdot \mu} \int_{C_{z_0}} \frac{q(z)}{\tau^{s+1}} \, dz \notag\\
    & = \frac{1}{2\pi i \cdot \mu }  p_0^{-(s+1)/\mu} \int_{C_{z_0}} \frac{q(z)\cdot (1-\phi(z))^{-(s+1)/\mu}}{(z-z_0)^{s+1}} \, dz \label{junn}\\
    & = \frac{1}{\mu  \cdot s!}  p_0^{-(s+1)/\mu} \frac{d^s}{dz^s}\left\{q(z) \left( 1-\phi(z)\right)^{-(s+1)/\mu)} \right\}_{z=z_0}
    \label{jun}
\end{align}
where \eqref{junn} is related to \eqref{jun}  by Cauchy's differentiation formula. Thus $\beta_s$ is recognized as $\alpha_s$ from \eqref{as}.
Combining \eqref{icax} and \eqref{jun} completes the second proof of Theorem \ref{il}.
\end{proof}

\section{An important case}
A case of Corollary \ref{il2} that often arises is when $\cc$ passes through the saddle-point $z_0$ in a straight line or in a curve with a well-defined tangent at $z_0$. If $\mu$ is even then these paths will pass through opposite valley sectors, for example with $\theta_{k_2}=\theta_{k_1}\pm \pi$. In this case the terms in \eqref{wimab} with $s$ odd vanish:

\begin{cor} \label{ilf} {\rm (Perron's method for a holomorphic integrand with contour passing  through  a maximum between opposite sectors.)}
Suppose Assumptions \ref{ma0} hold and $\mu$ is even. Let $\cc$ be a contour beginning at $z_1$, passing through $z_0$ and ending at $z_2$, with these points all distinct.  Suppose that
\begin{equation*}
   \Re(p(z))<\Re(p(z_0)) \quad \text{for all} \quad z \in \cc, \ z\neq z_0.
\end{equation*}
Let $\cc$ approach $z_0$ in a sector of angular width $2\pi/\mu$ about $z_0$ with bisecting angle $\theta_{k}+(2n+1)\pi$  for some $n\in \Z$, and  initially leave $z_0$ in a sector of the same size with bisecting angle $\theta_{k}$.
 Then for every $M \in \Z_{\gqs 0}$,
\begin{equation*} 
    \int_\cc e^{N \cdot p(z)} q(z) \, dz = e^{N \cdot p(z_0)} \left(\sum_{m=0}^{M-1}  \G\left(\frac{2m+1}{\mu}\right) \frac{2\alpha_{2m}   \cdot e^{2\pi i k (2m+1)/\mu}}{N^{(2m+1)/\mu}} + O\left(\frac{K_q}{N^{(2M+1)/\mu}} \right)  \right)
\end{equation*}
as $N \to \infty$ where the implied constant  is independent of $N$ and $q$. The numbers $\alpha_{s}$ are given by \eqref{as}.
\end{cor}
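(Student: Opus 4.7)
\bigskip

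\noindent\textbf{Proof proposal.} The strategy is to deduce Corollary \ref{ilf} directly from Corollary \ref{il2}, using the parity of $\mu$ to collapse the alternating signs in the asymptotic series. Given that the contour leaves $z_0$ in the sector with bisecting angle $\theta_{k_2}:=\theta_k$ and approaches $z_0$ in the sector with bisecting angle $\theta_k + (2n+1)\pi$, I would first recast the approach angle in the form $\theta_{k_1}$ required by Corollary \ref{il2}. Since
\begin{equation*}
\theta_\ell = -\frac{\omega_0}{\mu} + \frac{2\pi \ell}{\mu},
\end{equation*}
setting $k_1 := k + (2n+1)\mu/2$ yields $\theta_{k_1} = \theta_k + (2n+1)\pi$. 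Because $\mu$ is even, $(2n+1)\mu/2 \in \Z$, so $k_1 \in \Z$ and the hypotheses of Corollary \ref{il2} hold verbatim with this $k_1$ and $k_2 = k$.

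Next I would simplify the coefficient appearing in \eqref{wimab}. From the explicit value of $k_1$,
\begin{equation*}
e^{2\pi i k_1 (s+1)/\mu} = e^{2\pi i k(s+1)/\mu} \cdot e^{(2n+1)(s+1)\pi i} = (-1)^{s+1} e^{2\pi i k(s+1)/\mu},
\end{equation*}
so that
\begin{equation*}
e^{2\pi i k_2 (s+1)/\mu} - e^{2\pi i k_1 (s+1)/\mu} = \bigl(1-(-1)^{s+1}\bigr) e^{2\pi i k(s+1)/\mu},
\end{equation*}
which vanishes when $s$ is odd and equals $2 e^{2\pi i k(s+1)/\mu}$ when $s$ is even.

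Finally I would apply Corollary \ref{il2} with $S = 2M$. The sum over $s = 0, 1, \ldots, 2M-1$ then collapses to a sum over the even indices $s = 2m$ with $m = 0, \ldots, M-1$, producing exactly the stated series, while the error term becomes $O(K_q/N^{(2M+1)/\mu})$, matching the claim and inheriting the independence of the implied constant from $N$ and $q$ already established in Corollary \ref{il2}. This argument is essentially a specialization rather than a fresh proof; the only point that genuinely uses the hypothesis is the integrality of $k_1$, which fails without the parity assumption on $\mu$.
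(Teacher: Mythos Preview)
Your proof is correct and follows exactly the paper's approach: apply Corollary \ref{il2} with $k_2=k$ and $k_1=k+(2n+1)\mu/2$ (an integer since $\mu$ is even), compute the difference of exponentials as $e^{2\pi i k(s+1)/\mu}(1-(-1)^{s+1})$, and then set $s=2m$. Your added remark about taking $S=2M$ to match the error term makes explicit what the paper leaves implicit.
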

\begin{proof}
Apply Corollary \ref{il2} with $k_2=k$ and $k_1=k+(2n+1)\mu/2$. Then the difference of exponentials in \eqref{wimab} is
\begin{align*}
    e^{2\pi i k (s+1)/\mu} - e^{2\pi i (k+(2n+1)\mu/2) (s+1)/\mu} & = e^{2\pi i k (s+1)/\mu} \left(1-e^{2\pi i (\mu/2)(s+1)/\mu} \right) \\
    & = e^{2\pi i k (s+1)/\mu} \left(1-(-1)^{s+1} \right)
\end{align*}
and the corollary  follows on writing $s=2m$.
\end{proof}

The above result corresponds to \cite[Thm. 7.1, p. 127]{Ol} when $\mu=2$,   giving a clearer description of how the result depends on $\cc$ near $z_0$. Olver does not give the formula \eqref{as} for the coefficients and perhaps he was not aware of Perron's paper \cite{Pe17}. It does not appear in the references of \cite{Ol}, though \cite{Bu14} is listed. Perron's paper \cite{Pe17} is not cited by the classic works \cite{deB,Cop,Din} either. It is briefly mentioned, along with \cite{Bu14}, in section 2.4 of Erd\'elyi's book \cite{Erd}, though in a way which seems to imply that Perron only gives the main term of the asymptotic expansions.

\section{Generalizations} \label{general}

\subsection{Including a factor $(z-z_0)^{a-1}$ with $\Re(a)>0$}
Perron's  results in \cite{Pe17} cover a more general situation where we have $(z-z_0)^{a-1} q(z)$ in the integrand,  instead of just $q(z)$. Unlike Perron, we do not assume that $q(z_0) \neq 0$. The number $a$ is in $\C$ and so we must pay attention to which branch of $(z-z_0)^{a-1}$ is meant. For example, if $z$ is on the  bisecting line with angle $\theta_k$ (recall \eqref{bisec}) then possible branches are
\begin{equation} \label{thh}
    (z-z_0)^{a-1} = |z-z_0|^{a-1}\cdot e^{i\theta_\ell(a-1)}
\end{equation}
for $\ell \in \Z$ with $\ell \equiv k \bmod \mu$. The principal value of the power \eqref{thh} has the unique such $\ell$ for which $\theta_\ell$ is in $(-\pi,\pi]$.

The standard method for integrating  a multi-valued function such as \eqref{thh} along a contour $\cc$  is to begin with a  specified branch, and as $z$ moves along $\cc$ the branch is determined by continuity. In particular, if $z-z_0$ crosses the negative real axis then $(z-z_0)^{a-1}$ enters another branch.

\begin{theorem}  \label{m4} {\rm (Perron's method for an integrand containing a factor $(z-z_0)^{a-1}$ for $\Re(a)>0$ and with contour starting at a maximum.)}
Suppose Assumptions \ref{ma0} hold.
 Let $\cc$ be a contour from $z_0$ to $z_1$, with $z_0 \neq z_1$, that initially runs along the bisecting line with angle $\theta_k$ for some $k \in \Z$. Suppose  $\Re(a)>0$ and
    that
\begin{equation}\label{c1m4}
     \Re(p(z))<\Re(p(z_0)) \quad \text{for all} \quad z \in \cc, \ z\neq z_0.
\end{equation}
On the initial part of $\cc$ we take
\begin{equation} \label{thhxm4}
    (z-z_0)^{a-1} = |z-z_0|^{a-1}\cdot e^{i\theta_k(a-1)}.
\end{equation}
 Then for any $S \in \Z_{\gqs 0}$,
\begin{equation} \label{wimm4}
    \int_\cc e^{N \cdot p(z)} (z-z_0)^{a-1} q(z) \, dz = e^{N \cdot p(z_0)} \left(\sum_{s=0}^{S-1} \G\left(\frac{s+a}{\mu}\right) \frac{\alpha_s \cdot e^{2\pi i k (s+a)/\mu}}{N^{(s+a)/\mu}} + O\left(\frac{K_q}{N^{(S+\Re(a))/\mu}} \right)  \right)
\end{equation}
where the implied constant in \eqref{wimm4} is independent of $N$ and $q$. The numbers $\alpha_s$
  are given by
\begin{equation} \label{as2}
    \alpha_s = \frac{1}{\mu \cdot  s!} p_0^{-(s+a)/\mu} \frac{d^s}{dz^s}\left\{q(z) \cdot \left( 1-\phi(z)\right)^{-(s+a)/\mu} \right\}_{z=z_0}.
\end{equation}
\end{theorem}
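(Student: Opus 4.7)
The plan is to follow the first proof of Theorem \ref{il} in Section \ref{fp} essentially verbatim, with the exponent $1$ replaced by $a$ in every Gamma argument and with careful bookkeeping of the branch of $(z-z_0)^{a-1}$. Since $\cc$ initially runs along the bisecting line $\theta_k$, there is some $\delta>0$ so that $\cc$ coincides with the segment $[z_0,z_0+\delta e^{i\theta_k}]$ for $|z-z_0|\lqs\delta$, and on this segment $(z-z_0)^{a-1}$ takes the value \eqref{thhxm4}. On the remainder of $\cc$, $|(z-z_0)^{a-1}|$ is bounded by a constant depending only on $\cc$ and $a$. Shrinking $R_p$ if necessary so that $R_p\lqs\delta$, the argument of Proposition \ref{ilz} applies: the integral over the portion of $\cc$ outside the disk of radius $R_p$ about $z_0$ is $O(K_q e^{-\varepsilon N})$, and we may replace $\cc$ by the straight segment from $z_0$ to the point $b'$ at distance $\rho/2$ along $\theta_k$.

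I would then carry out the Taylor expansion $q(z)e^{w\phi(z)}=\sum_{s=0}^{S-1}P_s(w)(z-z_0)^s$ with $w=Np_0(z-z_0)^\mu$, using Lemmas \ref{l1} and \ref{pq} unchanged, and define
\begin{equation*}
  I_s(N):=\int_{z_0}^{b'} e^{-w}P_s(w)(z-z_0)^{s+a-1}\,dz.
\end{equation*}
On the bisecting segment $z-z_0=te^{i\theta_k}$ with $t\in[0,\rho/2]$, so $w=N|p_0|t^\mu\gqs 0$, and the chosen branch gives $(z-z_0)^{s+a-1}\,dz=t^{s+a-1}e^{i\theta_k(s+a)}\,dt$. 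Substituting $t=w^{1/\mu}(N|p_0|)^{-1/\mu}$ and using the principal-value identity
\begin{equation*}
  e^{i\theta_k(s+a)}\,|p_0|^{-(s+a)/\mu}=p_0^{-(s+a)/\mu}\,e^{2\pi ik(s+a)/\mu},
\end{equation*}
which is a direct consequence of $p_0=|p_0|e^{i\omega_0}$ and $\mu\theta_k=-\omega_0+2\pi k$, yields the analog of Lemma \ref{min3} with $(s+1)/\mu$ replaced by $(s+a)/\mu$. The remainder integral $\int_{z_0}^{b'}Q_S(z)(z-z_0)^{a-1}\,dz$ is bounded as in Lemma \ref{min} but with an extra factor $t^{\Re(a)-1}$ in the integrand, producing the error $O(K_q/N^{(S+\Re(a))/\mu})$; this is exactly where $\Re(a)>0$ is needed for integrability at $t=0$.

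The analog of Proposition \ref{min4} then proceeds verbatim: the incomplete-Gamma tail at $T=N|p_0|(\rho/2)^\mu$ is exponentially small by \eqref{gam}, and $\int_0^\infty e^{-w}P_s(w)w^{(s+a)/\mu-1}\,dw=\sum_{\ell=0}^{s}c_{s,\ell}\G((s+a)/\mu+\ell)$, which by the same binomial manipulation used between \eqref{ppo} and \eqref{ppoo} equals $\G((s+a)/\mu)$ times the coefficient of $(z-z_0)^s$ in $q(z)(1-\phi(z))^{-(s+a)/\mu}$. This gives exactly formula \eqref{as2} for $\alpha_s$. The main obstacle is the clean verification of the principal-branch identity above (keeping track of $\omega_0\in(-\pi,\pi]$ and of continuations when $\omega_0(s+a)/\mu$ leaves the principal strip); once that is in place, the rest of Section \ref{fp} requires no genuine modification. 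Alternatively one could mimic the second proof in Section \ref{spf}, absorbing $(z-z_0)^{a-1}$ into the function $F(\tau)$ so as to shift the exponent in Lemma \ref{jna} from $r$ to $r+(a-1)/\mu$, recovering \eqref{as2} via Cauchy's differentiation formula as in \eqref{jun}.
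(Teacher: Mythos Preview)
Your proposal is correct and takes essentially the same approach as the paper, which notes that the first proof of Theorem \ref{il} extends directly once one verifies the branch identity $(z-z_0)^{s+a}=w^{(s+a)/\mu}(Np_0)^{-(s+a)/\mu}e^{2\pi ik(s+a)/\mu}$ on the bisecting line (your display), after which Lemma \ref{min3} and the rest of Section \ref{fp} go through with $(s+1)/\mu$ replaced by $(s+a)/\mu$. Your stated ``main obstacle''---the principal-branch identity---is in fact a one-line algebraic check from $\mu\theta_k=-\omega_0+2\pi k$ and $p_0^{-(s+a)/\mu}=|p_0|^{-(s+a)/\mu}e^{-i\omega_0(s+a)/\mu}$, with no continuation issues to worry about; the paper also sketches the second-proof adaptation you mention at the end.
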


The condition in Theorem \ref{m4} that $\cc$  initially runs along the bisecting line with angle $\theta_k$ is not really necessary and just included for convenience. The theorem is true if $\cc$ begins in the sector of angular width $2\pi/\mu$ about $z_0$ with this bisecting line, and the branch of $(z-z_0)^{a-1}$ is consistent with \eqref{thhxm4}. The $a=1$ case of Theorem \ref{m4}  is Theorem \ref{il} and, in particular, \eqref{as2} reduces to \eqref{as} when $a=1$.

\begin{proof}[Proof of Theorem \ref{m4}]
We may use a straightforward extension of the first proof of Theorem \ref{il} given in Section \ref{fp}. The key step is in Lemma \ref{min3}, where we need to express $(z-z_0)^{a+n}$ in terms of $w$ for any $n\in \Z$ and $z$ on the bisecting line with angle $\theta_{k}$. Here, $(z-z_0)^{a+n} = (z-z_0)^{a-1}\cdot (z-z_0)^{1+n}$ where $(z-z_0)^{a-1}$ is given by \eqref{thhxm4} and $(z-z_0)^{1+n}$ is unambiguous. Then
\begin{align}
  (z-z_0)^{a+n} &= |z-z_0|^{a+n}\cdot e^{i\theta_k(a+n)} \notag\\
   &= \left(\frac{w}{N|p_0|}\right)^{(a+n)/\mu} e^{-i \omega_0 (a+n)/\mu} e^{2\pi i k(a+n)/\mu} \notag\\
   &= w^{(a+n)/\mu} (N p_0)^{-(a+n)/\mu}  e^{2\pi i k(a+n)/\mu} \label{xper}
\end{align}
with the powers in \eqref{xper} taking the principal values.
Therefore
\begin{align*}
  I_s(N) & := \int_{z_0}^{b'} e^{-w} P_s(w) (z-z_0)^{s+a-1} \, dz \\
   & = \frac{e^{2\pi i k (s+a)/\mu}}{\mu \cdot(N p_0)^{(s+a)/\mu}} \int_0^{N|p_0|(\rho/2)^\mu} e^{-w} P_s(w) w^{(s+a)/\mu-1}\, dw.
\end{align*}
The rest of the proof continues as in Section \ref{fp} to obtain the result.
\end{proof}

The second proof  given in Section \ref{spf} may also be adapted to Theorem \ref{m4}. The series $F(\tau)$ has a more complicated construction as described next. Define $\tau$ as in \eqref{taudef} and choose the branch of $\tau^{a-1}$ so that
\begin{equation} \label{taudef2}
    \tau^{a-1} = \tau(z)^{a-1} := p_0^{(a-1)/\mu}(z-z_0)^{a-1}(1-\phi(z))^{(a-1)/\mu}
\end{equation}
where $(z-z_0)^{a-1}$ is consistent with \eqref{thhxm4} and the two other powers in \eqref{taudef2} are principal. Then
\begin{equation*}
  -\frac{(z-z_0)^{a-1}}{\tau^{a-1}} q(z) \frac{\tau^{\mu-1}}{p'(z)} = - \frac{ q(z)}{p_0^{(a-1)/\mu}(1-\phi(z))^{(a-1)/\mu}} \frac{\tau^{\mu-1}}{p'(z)} = h(z)
\end{equation*}
for $h(z)$ holomorphic on $\mathcal{D}_z$. As in \eqref{mur} we may write
\begin{equation*}
  h(z_0+g(\tau)) =: \tau^{\mu -1} F(\tau) = \sum_{s=0}^\infty \beta_s \tau^s \qquad (z\in \mathcal D_z, \tau \in \mathcal D_\tau),
\end{equation*}
implying the identity
\begin{equation*}
  -(z-z_0)^{a-1}\frac{q(z)}{p'(z)} = \tau^{a -1} F(\tau) \qquad (z\in \mathcal D_z, \tau \in \mathcal D_\tau).
\end{equation*}
A calculation similar to Lemma \ref{fca} shows that
\begin{equation*}
   \tau^{a-1} = e^{2\pi i k(a-1)/\mu} v^{(a-1)/\mu}
\end{equation*}
when $z$ in $\mathcal D_z$ is on the line from $z_0$ to $b$.

With the above results in place, the rest of the proof of Section \ref{spf} goes through easily. Of particular interest is the computation of $\beta_s$, as in the equations leading to \eqref{dgf} and \eqref{jun}:
\begin{align}
    \beta_s & = \frac{1}{2\pi i} \int_{C_0} \frac{\tau^{\mu-1} F(\tau)}{\tau^{s+1}} \, d\tau \notag\\
     & = -\frac{1}{2\pi i} \int_{C_0}
     \frac{q(z_0+g(\tau))}{p_0^{(a-1)/\mu}(1-\phi(z_0+g(\tau)))^{(a-1)/\mu}  \cdot p'(z_0+g(\tau)) \cdot \tau^{s-\mu+2}} \, d\tau \label{dgf2}\\
     & = -\frac{1}{2\pi i \cdot \mu \cdot p_0^{(a-1)/\mu}} \int_{C_{z_0}}
     \frac{q(z)}{(1-\phi(z))^{(a-1)/\mu}  \cdot \tau^{s+1}} \, dz \notag\\
      & = -\frac{1}{2\pi i \cdot \mu \cdot p_0^{(s+a)/\mu}} \int_{C_{z_0}}
     \frac{q(z)(1-\phi(z))^{-(s+a)/\mu}}{ (z-z_0)^{s+1}} \, dz = \alpha_s. \notag
\end{align}
Formula \eqref{dgf2} will be used in Proposition \ref{albnd}. When $a=1$ then \eqref{dgf2} reduces to \eqref{dgf}.

\subsection{Including a factor $(z-z_0)^{a-1}$ with arbitrary $a \in \C$}
Two applications of Theorem \ref{m4} give the following corollary.

\begin{cor}  \label{m5} {\rm (Perron's method for an integrand containing a factor $(z-z_0)^{a-1}$ for $\Re(a)>0$ and with contour  passing  through  a maximum.)}
Suppose Assumptions \ref{ma0} hold.
Let $\cc$ be a contour starting at $z_1$, passing through $z_0$ and ending at  $z_2$,  with these three points all distinct. Suppose there are $k_1,$ $k_2 \in \Z$ so that, in a neighborhood of $z_0$,  $\cc$ runs along  the bisecting line with angle $\theta_{k_1}$ as $\cc$ approaches $z_0$ and $\cc$ runs along  the bisecting line with angle $\theta_{k_2}$  leaving $z_0$.  Assume  $\Re(a)>0$ and  that
\begin{equation}\label{c1m5}
     \Re(p(z))<\Re(p(z_0)) \quad \text{for all} \quad z \in \cc, \ z\neq z_0.
\end{equation}
On the part of  $\cc$ approaching $z_0$ we take
\begin{equation} \label{thhxm5a}
    (z-z_0)^{a-1} = |z-z_0|^{a-1}\cdot e^{i\theta_{k_1}(a-1)}
\end{equation}
and  on the part of  $\cc$ leaving $z_0$,
\begin{equation} \label{thhxm5b}
    (z-z_0)^{a-1} = |z-z_0|^{a-1}\cdot e^{i\theta_{k_2}(a-1)}.
\end{equation}
Then for any $S \in \Z_{\gqs 0}$,
\begin{multline} \label{wimm5}
    \int_\cc e^{N \cdot p(z)} (z-z_0)^{a-1} q(z) \, dz \\
    = e^{N \cdot p(z_0)} \left(\sum_{s=0}^{S-1} \G\left(\frac{s+a}{\mu}\right) \frac{\alpha_s  \left( e^{2\pi i {k_2}(s+a)/\mu}- e^{2\pi i {k_1}(s+a)/\mu}\right)}{N^{(s+a)/\mu}} + O\left(\frac{K_q}{N^{(S+\Re(a))/\mu}} \right)  \right)
\end{multline}
where the implied constant in \eqref{wimm5} is independent of $N$ and $q$. The numbers $\alpha_s$
 are given by \eqref{as2}.
\end{cor}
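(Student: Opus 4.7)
The plan is to split the contour at the saddle-point and apply Theorem \ref{m4} twice. Let $\cc_1$ denote the sub-contour from $z_1$ to $z_0$ and $\cc_2$ the sub-contour from $z_0$ to $z_2$, so that
\begin{equation*}
    \int_\cc e^{N \cdot p(z)} (z-z_0)^{a-1} q(z) \, dz = \int_{\cc_1} + \int_{\cc_2}.
\end{equation*}
Reversing the orientation on $\cc_1$ produces a contour $-\cc_1$ from $z_0$ to $z_1$ that initially runs along the bisecting line with angle $\theta_{k_1}$; the branch of $(z-z_0)^{a-1}$ prescribed by \eqref{thhxm5a} is exactly the branch \eqref{thhxm4} with $k = k_1$. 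Similarly, $\cc_2$ starts at $z_0$ and initially runs along the bisecting line with angle $\theta_{k_2}$, and \eqref{thhxm5b} matches \eqref{thhxm4} with $k = k_2$. The hypothesis \eqref{c1m5} transfers verbatim to both $-\cc_1$ and $\cc_2$, so Theorem \ref{m4} applies to each.

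Applying Theorem \ref{m4} to $\cc_2$ and to $-\cc_1$ with the same $a$, $p(z)$, $q(z)$, $z_0$, and the same $S$ yields
\begin{align*}
    \int_{\cc_2} e^{N p(z)} (z-z_0)^{a-1} q(z)\, dz &= e^{N p(z_0)}\left( \sum_{s=0}^{S-1} \G\!\left(\frac{s+a}{\mu}\right)\frac{\alpha_s\, e^{2\pi i k_2 (s+a)/\mu}}{N^{(s+a)/\mu}} + O\!\left(\frac{K_q}{N^{(S+\Re(a))/\mu}}\right)\right), \\
    \int_{-\cc_1} e^{N p(z)} (z-z_0)^{a-1} q(z)\, dz &= e^{N p(z_0)}\left( \sum_{s=0}^{S-1} \G\!\left(\frac{s+a}{\mu}\right)\frac{\alpha_s\, e^{2\pi i k_1 (s+a)/\mu}}{N^{(s+a)/\mu}} + O\!\left(\frac{K_q}{N^{(S+\Re(a))/\mu}}\right)\right),
\end{align*}
where crucially the coefficients $\alpha_s$ given by \eqref{as2} depend only on $p_0$, $\phi(z)$, $q(z)$, $a$, $s$, and $\mu$, and are in particular the same for both expansions since they are intrinsic to the local data at $z_0$ and independent of which sub-sector the path of integration traverses. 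Using $\int_{\cc_1} = -\int_{-\cc_1}$ and subtracting, the difference of exponentials $e^{2\pi i k_2 (s+a)/\mu} - e^{2\pi i k_1 (s+a)/\mu}$ appears term-by-term, and the two error terms combine into a single $O(K_q/N^{(S+\Re(a))/\mu})$ bound independent of $N$ and $q$, establishing \eqref{wimm5}.

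The only point that requires care is verifying that the branches \eqref{thhxm5a} and \eqref{thhxm5b} on the two halves of $\cc$ really do agree with the branch \eqref{thhxm4} used in Theorem \ref{m4} on each half, and that reversing orientation on $\cc_1$ does not alter the branch (orientation affects only the sign of $dz$, not the multi-valued factor $(z-z_0)^{a-1}$). Once this bookkeeping is settled, the conclusion is immediate from Theorem \ref{m4}. I expect this branch-tracking step to be the only mildly delicate part; the rest is a direct application and additivity of the integral.
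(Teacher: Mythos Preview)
Your proposal is correct and follows exactly the paper's approach: the paper simply states that ``Two applications of Theorem \ref{m4} give the following corollary,'' and you have spelled out precisely those two applications by splitting $\cc$ at $z_0$ and subtracting. Your attention to the branch bookkeeping and the observation that the $\alpha_s$ are intrinsic to the local data at $z_0$ are the right details to record.
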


The next result is an elegant extension of Corollary \ref{m5}, where Perron shows that the condition $\Re(a)>0$ may be dropped provided that  the contour of integration is adjusted to make sure it avoids $z_0$. We will need  this extension for the examples in Sections \ref{egg} and \ref{egg2}.

\begin{theorem}  \label{m6} {\rm (Perron's method for an integrand containing a factor $(z-z_0)^{a-1}$  for arbitrary $a \in \C$.)}
Suppose Assumptions \ref{ma0} hold.
Let $\cc$ be the following contour. Starting at $z_1$ it runs to the point $z'_1$ which is a distance $R_p$ from $z_0$ and  on the bisecting line with angle $\theta_{k_1}$. Then the contour circles $z_0$ to arrive at the point $z'_2$ which is a distance $R_p$ from $z_0$ and  on the bisecting line with angle $\theta_{k_2}$. Finally, the contour ends at $z_2$. The integers $k_1$ and $k_2$ keep track of how $\cc$ rotates about $z_0$ between $z'_1$ and $z'_2$; the angle of rotation is  $2\pi(k_2-k_1)/\mu$.

Suppose  that
$
    \Re(p(z))<\Re(p(z_0))
$
for all $z$ in the segments of $\cc$ between $z_1$ and $z'_1$ and between $z'_2$ and $z_2$ (including endpoints).
Let $a \in \C$. For $z\in \cc$, the branch of $(z-z_0)^{a-1}$ is specified by requiring
\begin{equation} \label{thhxm6a}
    (z'_1-z_0)^{a-1} = |z'_1-z_0|^{a-1}\cdot e^{i\theta_{k_1}(a-1)}
\end{equation}
when $z=z'_1$ and by continuity at the other points of $\cc$.
Then for any $S \in \Z_{\gqs 0}$, \eqref{wimm5} holds
with an implied constant  independent of $N$ and $q$.
 If $(s+a)/\mu \in \Z_{\lqs 0}$ then
 \begin{equation*}
    \G((s+a)/\mu) \left( e^{2\pi i {k_2}(s+a)/\mu}- e^{2\pi i {k_1}(s+a)/\mu}\right)
 \end{equation*}
  in \eqref{wimm5} is not defined and  must be replaced by $2\pi i (k_2-k_1)(-1)^{(s+a)/\mu}/|(s+a)/\mu|!$.
  \end{theorem}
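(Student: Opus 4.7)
The plan is to adapt the first proof of Theorem \ref{il} by deforming the arc around $z_0$ into a keyhole contour, and then evaluating the resulting integrals via Hankel's representation of $\G$, which handles all $a\in\C$ uniformly including the exceptional cases.

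First, the segments of $\cc$ between $z_1$ and $z'_1$ and between $z'_2$ and $z_2$ contribute $O(K_q e^{-\varepsilon N})$: by the hypothesis $\Re(p(z))<\Re(p(z_0))$ and compactness, $\Re(p(z)-p(z_0))\lqs -\varepsilon$ there, while $|z-z_0|\gqs R_p$ keeps $|(z-z_0)^{a-1}|$ bounded. These are absorbed in the error. Next, Cauchy's theorem---applied in the punctured region where the branch of $(z-z_0)^{a-1}$ specified by \eqref{thhxm6a} and continuity is holomorphic---replaces the arc from $z'_1$ to $z'_2$ by a keyhole contour: the radial segment along $\theta_{k_1}$ from $z'_1$ inward to $z_0+\delta e^{i\theta_{k_1}}$; a small arc $\gamma_\delta$ of radius $\delta$ about $z_0$ sweeping total angle $2\pi(k_2-k_1)/\mu$ in the same rotational sense as the original arc; and the radial segment along $\theta_{k_2}$ from $z_0+\delta e^{i\theta_{k_2}}$ outward to $z'_2$.

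With the substitution $w=Np_0(z-z_0)^\mu(1-\phi(z))$ and the Taylor expansion of Lemma \ref{pq} (the extra factor $(z-z_0)^{a-1}$ tracked via \eqref{xper}), the two radial pieces become real $w$-integrals carrying branch factors $e^{2\pi i k_1(s+a)/\mu}$ and $e^{2\pi i k_2(s+a)/\mu}$ respectively, while $\gamma_\delta$ becomes a small loop around $w=0$ with total angular change $2\pi(k_2-k_1)$. Extending the outer endpoints to $+\infty$ costs only an exponentially small error by \eqref{gam}, and the resulting Hankel contour $\mathcal H$ satisfies
\begin{equation*}
  \int_{\mathcal H} e^{-w} w^{(s+a)/\mu-1}\, dw = \left(e^{2\pi i k_2 (s+a)/\mu} - e^{2\pi i k_1 (s+a)/\mu}\right) \G\left(\frac{s+a}{\mu}\right),
\end{equation*}
an entire function of $a$ that gives the $s$th term of \eqref{wimm5}. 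The Taylor remainder is controlled as in Lemma \ref{min}: its radial-segment contribution is $O(N^{-(S+\Re(a))/\mu})$ directly, and its arc contribution is forced to the same order by choosing $\delta$ of order $N^{-1/\mu}$ so that $\delta^{S+\Re(a)}e^{-cN\delta^\mu}\ll N^{-(S+\Re(a))/\mu}$.

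At exceptional values $(s+a)/\mu=-n\in\Z_{\lqs 0}$, the apparent pole of $\G$ cancels the zero of the exponential difference. Taking the limit (equivalently, computing the arc integral directly via the residue of $e^{-w}w^{-n-1}$ at $w=0$ and noting that the winding is $k_2-k_1$) gives $2\pi i(k_2-k_1)(-1)^n/n!$, matching the stated substitute. The \emph{main obstacle} will be the consistent branch bookkeeping for $(z-z_0)^{a-1}$, $w^{(s+a)/\mu-1}$ and $(1-\phi)^{(a-1)/\mu}$ across the three legs of the keyhole, linked to the prescription \eqref{thhxm6a}: this is precisely the mechanism by which the integers $k_1$ and $k_2$ enter \eqref{wimm5}, and where the novelty over the $\Re(a)>0$ argument of Corollary \ref{m5} is concentrated.
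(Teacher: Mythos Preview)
Your proposal is correct and follows essentially the same route as the paper: bound the outer segments trivially, deform the arc near $z_0$ to a keyhole with small inner radius of order $N^{-1/\mu}$ to control the Taylor remainder, convert the main terms to a loop integral in $w$ around the origin, and evaluate via a Hankel-type identity extended by analytic continuation in $a$ (with the residue computation for the exceptional values $(s+a)/\mu\in\Z_{\lqs 0}$). One slip to fix: the substitution in Lemma~\ref{pq} is $w=Np_0(z-z_0)^\mu$, not $w=Np_0(z-z_0)^\mu(1-\phi(z))$; and note that the Hankel integrand actually carries the polynomial factor $P_s(w)$, so the evaluation still needs the expansion $P_s(w)=\sum_\ell c_{s,\ell}w^\ell$ and its recombination into $\alpha_s$, which in the paper is the content of Lemmas~\ref{tgrt} and~\ref{tgrt2}.
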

\begin{proof}
We will follow \cite[Sect. 4]{Pe17} and the first proof of Theorem \ref{il} given in Section \ref{fp}. It is convenient to move $z'_1$, $z'_2$ and the circular path of integration to the smaller radius $\rho/2$ with $\rho$ satisfying \eqref{krho}. The points $z'_1$ and $z'_2$ are kept on their bisecting lines.

There exists $\varepsilon>0$ so that $\Re(p(z)-p(z_0)) \lqs -\varepsilon$ for  all $z$ in the segment of $\cc$ between $z_1$ and $z'_1$ (using \eqref{repl>} for the new part). It also follows that on this segment $z$ is bounded away from $z_0$. Hence
\begin{equation*}
  \left| \int_{z_1}^{z'_1} e^{N(p(z)-p(z_0))} (z-z_0)^{a-1} q(z) \, dz \right|
  \lqs K_q e^{-\varepsilon N} \int_{z_1}^{z'_1} \left|(z-z_0)^{a-1}\right|  \, |dz| \ll K_q e^{-\varepsilon N}.
\end{equation*}
We obtain a similar bound for the integral between $z'_2$ and $z_2$. The integral around the circular path from $z'_1$ to $z'_2$ remains to be estimated.

Following Lemma \ref{pq}, write the integrand in the form
\begin{equation} \label{gftt}
    e^{N (p(z)-p(z_0))}(z-z_0)^{a-1} q(z) = \sum_{s=0}^{S-1} e^{-w} P_s(w) (z-z_0)^{s+a-1} + (z-z_0)^{a-1} Q_S(z)
\end{equation}
with  $w=N p_0 (z-z_0)^\mu$ as in \eqref{ww}. The integer $S$ should satisfy $S \gqs 0$ and  $S+\Re(a)>0$.

\begin{lemma} \label{mintt}
With this choice of $S$,
\begin{equation*}
    \int_{z'_1}^{z'_2}(z-z_0)^{a-1} Q_S(z) \, dz =O\left(\frac{K_q}{N^{(S+\Re(a))/\mu}}\right).
\end{equation*}
\end{lemma}
\begin{proof}
We may change the path of integration, moving the circular part closer to $z_0$ as follows. From $z'_1$ the new path follows the bisecting line with angle $\theta_{k_1}$ to a point $\zeta_1$ close to $z_0$. Then it circles $z_0$ until reaching $\zeta_2$ on the bisecting line with angle $\theta_{k_2}$. This bisecting line is followed to $z'_2$.

As in Lemma \ref{pq},
\begin{equation*}
    Q_S(z)=\frac{(z-z_0)^S}{2\pi i}\int_\g \frac{q(\tau)e^{-w+w \phi(\tau)}}{(\tau-z_0)^S(\tau-z)}\, d\tau
\end{equation*}
where $\g$ is the positively oriented circle of radius $\rho$ about $z_0$.
Note that
\begin{equation*}
  \Re(-w+w \phi(\tau)) \lqs |w|(1+|\phi(\tau)|) \lqs 3|w|/2 \lqs 2N|p_0||z-z_0|^\mu.
\end{equation*}
Hence, for $z$ with $|z-z_0|\lqs \rho/2$,
\begin{equation*}
  (z-z_0)^{a-1} Q_S(z)  \ll K_q |z-z_0|^{S+\Re(a)-1} e^{2N|p_0||z-z_0|^\mu}.
\end{equation*}
Suppose $\zeta_1$, $\zeta_2$ and the circular path of integration between them are at a distance $r$ from $z_0$. Then
\begin{equation} \label{jnn}
    \int_{\zeta_1}^{\zeta_2}(z-z_0)^{a-1} Q_S(z) \, dz =O\left(K_q \cdot r^{S+\Re(a)} e^{2N|p_0|r^\mu}\right).
\end{equation}
Choosing any $r \lqs N^{-1/\mu}$ shows that \eqref{jnn} satisfies the lemma's bound.
 The remaining integrals along the bisecting lines may now be bounded using \eqref{gf3} as in Lemma \ref{min}, completing the proof.
\end{proof}

Our work so far has shown
\begin{equation} \label{kqa}
    \int_\cc e^{N \cdot p(z)} (z-z_0)^{a-1} q(z) \, dz  = e^{N \cdot p(z_0)}\left( \sum_{s=0}^{S-1} I^*_s(N) +  O\left(\frac{K_q}{N^{(S+\Re(a))/\mu}}\right)
    \right)
\end{equation}
for
\begin{equation} \label{kqb}
     I^*_s(N) := \int_{z'_1}^{z'_2} e^{-w} P_s(w) (z-z_0)^{s+a-1} \, dz.
\end{equation}
Similarly to Lemma \ref{min3} and using \eqref{xper}, we change variables to $w$ in \eqref{kqb} to produce
\begin{equation}  \label{kqc}
I^*_s(N) = \frac{e^{2\pi i k_1 (s+a)/\mu}}{\mu \cdot(N p_0)^{(s+a)/\mu}} \int_{N|p_0|(\rho/2)^\mu}^{N|p_0|(\rho/2)^\mu} e^{-w} P_s(w) w^{(s+a)/\mu-1}\, dw.
\end{equation}
 The path of integration in \eqref{kqc} starts and ends at the positive real number $T:=N|p_0|(\rho/2)^\mu$, circling the origin $k_2-k_1$ times. The value of $w^{(s+a)/\mu-1}$ in \eqref{kqc} is the principal power value at the beginning of the integration path and $\exp(2\pi i(k_2-k_1)(s+a)/\mu)$ times this value at the end  of the integration path.


\begin{lemma} \label{tgrt}
 If $(s+a)/\mu \in \Z_{\lqs 0}$ then
 \begin{equation*}
   I^*_s(N) =  2\pi i(k_2-k_1) \frac{(-1)^{(s+a)/\mu}}{|(s+a)/\mu|!} \frac{\alpha_s}{N^{(s+a)/\mu}}.
 \end{equation*}
\end{lemma}
\begin{proof}
Let $m:=(s+a)/\mu$ and the integral in \eqref{kqc} is
\begin{equation} \label{kqd}
  \int_{T}^{T} e^{-w} P_s(w) w^{m-1}\, dw.
\end{equation}
When $m \in \Z_{\lqs 0}$, the integrand has a pole with residue
\begin{equation*}
  \sum_{\ell=0}^{|m|} c_{s,\ell} \frac{(-1)^{|m|-\ell}}{(|m|-\ell)!}
\end{equation*}
where $c_{s,\ell}$ is the coefficient of $(z-z_0)^s$ in the Taylor expansion of $q(z)\phi(z)^\ell/\ell!$ about $z_0$ as in \eqref{polyP}. Therefore \eqref{kqd} equals the coefficient of $(z-z_0)^s$ in
\begin{align*}
  2\pi i(k_2-k_1) q(z)\sum_{\ell=0}^{|m|} \frac{(-1)^{|m|-\ell}}{(|m|-\ell)!} \frac{\phi(z)^\ell}{\ell!}  & =  2\pi i(k_2-k_1) \frac{(-1)^{|m|}}{|m|!} q(z)\sum_{\ell=0}^{|m|}\binom{|m|}{\ell} (-\phi(z))^\ell  \\
   &  =  2\pi i(k_2-k_1) \frac{(-1)^{m}}{|m|!} q(z) (1-\phi(z))^{-m}.
\end{align*}
Putting this value into \eqref{kqc} and comparing with \eqref{as2} completes the proof of the lemma.
\end{proof}

\begin{lemma} \label{tgrt2}
 If $(s+a)/\mu \not\in \Z_{\lqs 0}$ then, for $\varepsilon'' >0$,
 \begin{equation*}
   I^*_s(N) = \G\left(\frac{s+a}{\mu}\right) \frac{\alpha_s  \left( e^{2\pi i {k_2}(s+a)/\mu}- e^{2\pi i {k_1}(s+a)/\mu}\right)}{N^{(s+a)/\mu}} +O(K_q e^{-\varepsilon'' N}).
 \end{equation*}
\end{lemma}
\begin{proof}
Let $\mathcal{H}_T$ be the path that starts at infinity, follows the positive real line to $T$, circles the origin $k_2-k_1$ times and then returns from $T$ to its starting point at infinity.  We need the simple extension of \eqref{gam} given by
\begin{equation}\label{gam2}
    \int_T^{\infty} e^{-w} w^{d-1} \,dw \lqs  e^{-T/2} \times \begin{cases} 2^d \G(d) & \text{if } d\gqs 1 \\
    2T^{d-1}  & \text{if } d\lqs 1
    \end{cases}
\end{equation}
for $T>0$. Then arguing as at the start of Proposition \ref{min4} shows that the integral in \eqref{kqc} satisfies
\begin{equation*}
  \int_{T}^{T} e^{-w} P_s(w) w^{(s+a)/\mu-1}\, dw = \int_{\mathcal{H}_T} e^{-w} P_s(w) w^{(s+a)/\mu-1}\, dw +O(K_q e^{-\varepsilon'' N})
\end{equation*}
for $T=N|p_0|(\rho/2)^\mu$ and $\varepsilon''=|p_0|(\rho/2)^\mu/2$.

Now we claim that
\begin{equation}\label{keee}
 \frac{e^{2\pi i k_1 (s+a)/\mu}}{\mu \cdot(N p_0)^{(s+a)/\mu}}
  \int_{\mathcal{H}_T} e^{-w} P_s(w) w^{(s+a)/\mu-1}\, dw = \G\left(\frac{s+a}{\mu}\right) \frac{\alpha_s  \left( e^{2\pi i {k_2}(s+a)/\mu}- e^{2\pi i {k_1}(s+a)/\mu}\right)}{N^{(s+a)/\mu}}
\end{equation}
for all $T>0$ and for all $a\in \C$ with $(s+a)/\mu \not\in \Z_{\lqs 0}$. If $s+\Re(a)>0$ then we may let $T\to 0$ and evaluate the integrals along $\R_{\gqs 0}$ as in the second half of Lemma \ref{min4}. This proves \eqref{keee} for $a$ in a right half plane. However, the left side of \eqref{keee} is a holomorphic function of $a$ for all $a\in \C$. The right side  of \eqref{keee} is also holomorphic for all $a\in \C$ except that the $\G$ function has poles at the non-positive integers. Hence, the holomorphic functions on each side \eqref{keee} must agree for all $a \in \C$, except for the non-positive integers, and the lemma follows.
\end{proof}

We note that, since the left side of \eqref{keee} is  holomorphic in $a$, taking a limit in $a$  so that $(s+a)/\mu$ approaches a non-positive integer on the right side of \eqref{keee} can also be used to prove Lemma \ref{tgrt}.

With \eqref{kqa} and Lemmas \ref{tgrt} and \ref{tgrt2}, we have proved Theorem \ref{m6} at least for $S$ sufficiently large to satisfy the conditions before Lemma \ref{mintt}. The terms $\alpha_s/N^{(s+a)/\mu}$ are $O(K_q /N^{(s+\Re(a))/\mu})$, (see Proposition \ref{albnd} below), and so we obtain the theorem for all $S \in \Z_{\gqs 0}$.
\end{proof}

\subsection{Further generalizations} \label{laplace}
The main results of Theorems \ref{il}, \ref{m4} and \ref{m6} may be extended in different directions:
\begin{itemize}
  \item The case where the contour of integration $\cc$ has an endpoint at infinity can easily be handled if the part of the integral near infinity has a bound such as $O(e^{-\varepsilon N})$.
  \item It is possible to let $\mu$ in \eqref{f} be a positive real number instead of just a positive integer - see for example \cite[Thm. 6.1, p. 125]{Ol}. Of course $p(z)$ will no longer be holomorphic in a neighborhood of $z_0$ if $\mu$ is not an integer.
  \item With extra conditions, as described in \cite[Thm. 4, p. 105]{Wo89} or \cite[Thm. 6.1, p. 125]{Ol}, we may allow $N$ to approach infinity in a sector in $\C$
  \item Laplace's method, originating with Laplace in the 18th century, gives the main term of the asymptotics of \eqref{i(n)} where $\cc$ is an interval on the real line and $p(z)$ and $q(z)$ are real-valued. It is assumed that there exists  a unique maximum of $p(z)$ on $\cc$ (at $z=z_0$, say) along with the weak conditions that $p(z)$ is differentiable with $p'(z)$ and $q(z)$ continuous; see for example \cite[Thm. 7.1, p. 81]{Ol} for the precise statement. When $p(z),$ $q(z)$ have series expansions in a neighborhood of $z_0$ then as in \cite[Thm. 8.1, p. 86]{Ol}, the full asymptotic expansion of \eqref{i(n)} can be given.
      If $p(z)$ and $q(z)$ are restrictions of holomorphic functions on a domain containing $\cc$, then Perron's method may be applied to obtain the same result since $z_0$ is necessarily a saddle-point with  steepest descent angles lying on the real line.
  \item In Section VIII of \cite{Fl09} a general type of saddle-point algorithm is provided to attempt to find the asymptotics as $N\to \infty$ of integrals $\int_\cc F(z)\, dz$ where $F(z)$ depends in some way on $N$. 
\end{itemize}

\section{More formulas for $\alpha_s$}  \label{formul}

If we know the order of vanishing of $q(z)$ at $z=z_0$ then we can say which of the first numbers $\alpha_0,$ $\alpha_1, \ldots$ in Theorems \ref{il} or \ref{m4} are zero.

\begin{prop}
Let the order of vanishing of $q(z)$ at $z=z_0$ be $m$ and write $q(z)=(z-z_0)^{m}\psi(z)$ where $\psi(z_0) \neq 0$.
Then we have $\alpha_0 = \alpha_1 = \cdots = \alpha_{m-1}=0$ and $\alpha_{m}\neq 0$. Also, for $s \in \Z_{\gqs 0}$,
\begin{equation} \label{sjk}
   \alpha_{s+m} =  \frac{1}{\mu \cdot s!} p_0^{-(s+m+a)/\mu} \frac{d^s}{dz^s}\left\{\psi(z) \cdot \left( 1-\phi(z)\right)^{-(s+m+a)/\mu)} \right\}_{z=z_0}.
\end{equation}
\end{prop}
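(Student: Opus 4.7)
The plan is to apply the Leibniz rule directly to Perron's formula \eqref{as2}. Writing $q(z) = (z-z_0)^m \psi(z)$ and $g_s(z) := \psi(z)(1-\phi(z))^{-(s+a)/\mu}$, the quantity inside the derivative in \eqref{as2} factors as $(z-z_0)^m \cdot g_s(z)$, where $g_s$ is holomorphic on a neighborhood of $z_0$ with $g_s(z_0) = \psi(z_0) \neq 0$. The key observation is that $\frac{d^j}{dz^j}(z-z_0)^m$ evaluated at $z_0$ equals $m!$ when $j = m$ and vanishes for all other $j \geq 0$.

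First I would expand
\[
\frac{d^s}{dz^s}\bigl\{(z-z_0)^m g_s(z)\bigr\}_{z=z_0} = \sum_{j=0}^{s}\binom{s}{j}\frac{d^j}{dz^j}(z-z_0)^m\Big|_{z=z_0}\cdot \frac{d^{s-j}}{dz^{s-j}}g_s(z)\Big|_{z=z_0}.
\]
For $0 \le s \le m-1$, the index $j = m$ never appears, so every term vanishes, giving $\alpha_0 = \cdots = \alpha_{m-1} = 0$. For $s \ge m$, only the $j = m$ term survives, yielding
\[
\frac{d^s}{dz^s}\bigl\{(z-z_0)^m g_s(z)\bigr\}_{z=z_0} = \binom{s}{m} m! \cdot \frac{d^{s-m}}{dz^{s-m}}g_s(z)\Big|_{z=z_0} = \frac{s!}{(s-m)!}\cdot \frac{d^{s-m}}{dz^{s-m}}g_s(z)\Big|_{z=z_0}.
\]
Substituting back into \eqref{as2}, the factors of $s!$ cancel, and a reindexing $s \mapsto s+m$ produces \eqref{sjk}.

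To confirm $\alpha_m \ne 0$, I would put $s = 0$ in \eqref{sjk}, which gives
\[
\alpha_m = \frac{1}{\mu}\, p_0^{-(m+a)/\mu}\, \psi(z_0)(1-\phi(z_0))^{-(m+a)/\mu} = \frac{\psi(z_0)}{\mu}\, p_0^{-(m+a)/\mu},
\]
using $\phi(z_0) = 0$. Since $p_0 \neq 0$ and $\psi(z_0) \neq 0$ by hypothesis, this is nonzero. There is no substantive obstacle here — the argument is a straightforward application of the Leibniz rule — and the mild bookkeeping with the binomial coefficient $\binom{s}{m}m! = s!/(s-m)!$ is the only nontrivial step.
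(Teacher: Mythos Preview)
Your proof is correct and follows essentially the same approach as the paper: apply Leibniz's rule to the product $(z-z_0)^m \cdot g_s(z)$ in \eqref{as2}, use that the $j$th derivative of $(z-z_0)^m$ at $z_0$ is $m!$ when $j=m$ and $0$ otherwise, and read off the vanishing for $s<m$, the formula \eqref{sjk}, and the nonvanishing of $\alpha_m$. You have merely written out the bookkeeping in more detail than the paper does.
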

\begin{proof}
Replace $q(z)$ by $(z-z_0)^{m}\psi(z)$ in \eqref{as2}, and evaluate the derivative with Leibnitz's rule and the fact that
\begin{equation*}
    \left.\frac{d^s}{dz^s}(z-z_0)^{m}\right|_{z=z_0} = \begin{cases}
    m! & \text{if $\ s=m$}\\ 0 & \text{if $\ s\neq m$}.
    \end{cases}
\end{equation*}
It follows easily that $\alpha_s=0$ for $s\lqs m-1$ and that \eqref{sjk} holds. Also \eqref{sjk} implies that $\alpha_{m}$ takes the non-zero value $p_0^{-(m+a)/\mu}\psi(z_0)/\mu $.
\end{proof}

Therefore, in Theorems \ref{il} and \ref{m4} where $\cc$ starts at $z_0$, the main term of the asymptotic expansion has $s=m$ where $m$ is the order of vanishing of $q(z)$.

In Corollaries \ref{il2}, \ref{m5} and Theorem \ref{m6} where $\cc$ passes through $z_0$, the main  term of the asymptotic expansion may not be $s=m$, since the factor $e^{2\pi i {k_2}(s+a)/\mu}- e^{2\pi i {k_1}(s+a)/\mu}$  vanishes when $(k_2-k_1)(s+a)/\mu \in \Z$, and a calculation is required to find the first non-zero term. In some cases the terms $\alpha_s(e^{2\pi i {k_2}(s+a)/\mu}- e^{2\pi i {k_1}(s+a)/\mu})$ vanish for all $s$ and we do not obtain exact asymptotics with these results. This happens for example when $\mu=1$ and $a\in \Z$, or when $q(z)=p'(z)$.

As before, write
\begin{equation*}
    p(z)-p(z_0)=-\sum_{s=0}^\infty p_s (z-z_0)^{s+\mu}, \qquad q(z)=\sum_{s=0}^\infty q_s (z-z_0)^{s}.
\end{equation*}
The next result is due to Campbell,  Fr{\"o}man and Walles \cite[pp. 157-158]{CFW} and expresses $\alpha_s$ in terms of the coefficients $p_s$ and $q_s$. It requires
 the {\em partial ordinary Bell polynomials} which may be defined with the generating function
\begin{equation} \label{pobell2}
    \left( p_1 x +p_2 x^2+ p_3 x^3+ \cdots \right)^j = \sum_{i=j}^\infty \hat{B}_{i,j}(p_1, p_2, p_3, \dots) x^i.
\end{equation}
 It is straightforward to see they may also be given as
\begin{equation} \label{pobell}
    \hat{B}_{i,j}(p_1, p_2, p_3, \dots) = \sum_{\substack{1\ell_1+2 \ell_2+ 3\ell_3+\dots = i \\ \ell_1+ \ell_2+ \ell_3+\dots = j}}
    \frac{j!}{\ell_1! \ell_2! \ell_3! \cdots } p_1^{\ell_1} p_2^{\ell_2} p_3^{\ell_3} \cdots
\end{equation}
from \cite[Sect. 3.3]{Comtet} where the sum is over all possible $\ell_1$, $\ell_2$, $\ell_3, \dots \in \Z_{\gqs 0}$, or as
\begin{equation} \label{pobell3}
    \hat{B}_{i,j}(p_1, p_2, p_3, \dots)= \sum_{n_1+n_2+\dots + n_j = i}
    p_{n_1}p_{n_2} \cdots p_{n_j}
\end{equation}
for $j \gqs 1$ from \cite[p. 156]{CFW} where the sum is over all possible $n_1$, $n_2,  \dots \in \Z_{\gqs 1}$.
See \cite[Sect. 3.3]{Comtet} for more information on Bell polynomials, including their recurrence relations.

\begin{prop} \label{wojf}
For $\alpha_s$ defined in \eqref{as2},
\begin{equation} \label{hjw}
    \alpha_s = \frac{1}{\mu} p_0^{-(s+a)/\mu} \sum_{i=0}^s q_{s-i}\sum_{j=0}^i  \binom{-(s+a)/\mu}{j}  \hat B_{i,j}\left(\frac{p_1}{p_0},\frac{p_2}{p_0},\cdots\right).
\end{equation}
\end{prop}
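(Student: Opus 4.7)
The plan is to reinterpret the formula \eqref{as2} for $\alpha_s$ as a coefficient extraction problem and then expand the two nontrivial factors using the binomial series and the generating-function definition \eqref{pobell2} of the partial ordinary Bell polynomials. By Taylor's theorem, \eqref{as2} is equivalent to
\begin{equation*}
  \alpha_s = \frac{1}{\mu}\, p_0^{-(s+a)/\mu}\, [(z-z_0)^s]\, \bigl\{q(z)(1-\phi(z))^{-(s+a)/\mu}\bigr\},
\end{equation*}
where $[(z-z_0)^s]\{\cdot\}$ denotes the coefficient of $(z-z_0)^s$ in the Taylor expansion at $z_0$.

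First, I would determine the Taylor series of $\phi(z)$ by comparing the two expressions for $p(z)-p(z_0)$, namely $-p_0(z-z_0)^\mu(1-\phi(z))$ from \eqref{f} and $-\sum_{s=0}^\infty p_s(z-z_0)^{s+\mu}$ from \eqref{pexp}. Dividing by $-p_0(z-z_0)^\mu$ yields $1-\phi(z) = \sum_{s=0}^\infty (p_s/p_0)(z-z_0)^s$, hence
\begin{equation*}
  -\phi(z) = \sum_{s=1}^\infty \frac{p_s}{p_0}(z-z_0)^s.
\end{equation*}
Next, apply the binomial series
\begin{equation*}
  (1-\phi(z))^{-(s+a)/\mu} = \sum_{j=0}^\infty \binom{-(s+a)/\mu}{j} (-\phi(z))^j,
\end{equation*}
which converges in some disk about $z_0$ since $|\phi(z)|<1$ there. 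By the generating-function definition \eqref{pobell2} applied with $p_k$ replaced by $p_k/p_0$,
\begin{equation*}
  (-\phi(z))^j = \sum_{i=j}^\infty \hat B_{i,j}\!\left(\frac{p_1}{p_0},\frac{p_2}{p_0},\dots\right)(z-z_0)^i.
\end{equation*}
Substituting and interchanging the order of summation (justified by absolute convergence in a sufficiently small neighborhood) gives
\begin{equation*}
  (1-\phi(z))^{-(s+a)/\mu} = \sum_{i=0}^\infty \left(\sum_{j=0}^i \binom{-(s+a)/\mu}{j} \hat B_{i,j}\!\left(\frac{p_1}{p_0},\frac{p_2}{p_0},\dots\right)\right)(z-z_0)^i.
\end{equation*}

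Finally, multiplying by $q(z) = \sum_{s=0}^\infty q_s(z-z_0)^s$ and extracting the coefficient of $(z-z_0)^s$ via the Cauchy product yields exactly the right-hand side of \eqref{hjw} after the prefactor $p_0^{-(s+a)/\mu}/\mu$ is restored. No step is really an obstacle here; the main care needed is keeping the adjustment to $p_s/p_0$ straight (the expansion of $\phi$ is in those normalized coefficients, not in the $p_s$ themselves) and noting that the inner sum on $j$ can be truncated at $j=i$ because $\hat B_{i,j}=0$ for $j>i$, which is immediate from \eqref{pobell} or \eqref{pobell3}.
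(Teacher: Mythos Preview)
Your proposal is correct and follows essentially the same route as the paper: rewrite $\alpha_s$ as a coefficient extraction, expand $(1-\phi(z))^{-(s+a)/\mu}$ via the binomial series with $-\phi(z)=\sum_{s\ge 1}(p_s/p_0)(z-z_0)^s$, apply the Bell-polynomial generating function \eqref{pobell2}, and finish with the Cauchy product against $q(z)$. The only cosmetic difference is that the paper writes the binomial expansion directly in terms of $1+\sum_{s\ge 1}(p_s/p_0)(z-z_0)^s$ rather than isolating $-\phi(z)$ first.
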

\begin{proof}
We have
\begin{align}
     \left( 1-\phi(z)\right)^{-(s+a)/\mu} & =
    \left( 1+\sum_{s=1}^\infty \frac{p_s}{p_0} (z-z_0)^{s}\right)^{-(s+a)/\mu} \notag \\
    & = \sum_{j=0}^\infty \binom{-(s+a)/\mu}{j} \left(\sum_{s=1}^\infty \frac{p_s}{p_0} (z-z_0)^{s}\right)^j \label{veni}\\
    & = \sum_{j=0}^\infty  \binom{-(s+a)/\mu}{j} \sum_{i=j}^\infty \hat B_{i,j}\left(\frac{p_1}{p_0},\frac{p_2}{p_0},\cdots\right)(z-z_0)^i. \notag
\end{align}
Therefore the coefficient of $(z-z_0)^s$ in $q(z) \left( 1-\phi(z)\right)^{-(s+a)/\mu}$ is
\begin{equation*}
    \sum_{i=0}^s q_{s-i}\sum_{j=0}^i  \binom{-(s+a)/\mu}{j}  \hat B_{i,j}\left(\frac{p_1}{p_0},\frac{p_2}{p_0},\cdots\right)
\end{equation*}
and the result follows.
\end{proof}

With $a=1$, the first cases are $\alpha_0= p_0^{-1/\mu} q_0/\mu$ and
\begin{equation*}
    \alpha_1=\frac{1}{\mu} p_0^{-2/\mu}\left( -\frac{2p_1 q_0}{\mu p_0}+q_1\right),\qquad
  \alpha_2=\frac{1}{\mu} p_0^{-3/\mu}\left( \frac{3(1+3/\mu)p_1^2 q_0}{2\mu p_0^2}-\frac{3(p_2 q_0+p_1 q_1)}{\mu p_0}+q_2\right).
\end{equation*}
Moving $p_0$ out of the sum in \eqref{veni} gives the slightly different formulation
\begin{equation} \label{hjw2}
    \alpha_s = \frac{1}{\mu} p_0^{-(s+a)/\mu} \sum_{i=0}^s q_{s-i}\sum_{j=0}^i p_0^{-j} \binom{-(s+a)/\mu}{j}  \hat B_{i,j}(p_1,p_2,\cdots).
\end{equation}
Wojdylo \cite{Woj06} rediscovered the formula \eqref{hjw} in the context of Laplace's method, though his proof  seems incomplete; the form of \cite[Eq. (2.34)]{Woj06} needs to be justified.
A comparison of the schemes to give $\alpha_s$ explicitly  in \cite{Pe17,deB,Din,CFW,Woj06} is discussed in the Appendix of
\cite{LP2011}. See also \cite{Nem13}.

We finish this section with a new bound for these expansion coefficients.

\begin{prop} \label{albnd}
With  Assumptions \ref{ma0} and $\alpha_s$ defined in \eqref{as2},
\begin{equation} \label{albnd2}
    \alpha_{s} = O\bigl( K^*_q \cdot C^s \bigr) \quad \text{for} \quad s \in \Z_{\gqs 0}
\end{equation}
where $K^*_q$ is a bound for  $|q(z)|$ on $\nb$. The positive constant $C$ and the implied constant in \eqref{albnd2} are both independent of $q$ and $s$.
\end{prop}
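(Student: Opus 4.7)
The plan is to apply Cauchy's integral representation of $\alpha_s$ directly. Starting from the version of \eqref{dgf2} with $a$ arbitrary, namely
\begin{equation*}
    \alpha_s = \frac{1}{2\pi i \cdot \mu \cdot p_0^{(s+a)/\mu}} \int_{C_{z_0}} \frac{q(z)(1-\phi(z))^{-(s+a)/\mu}}{(z-z_0)^{s+1}}\, dz,
\end{equation*}
(equivalently, by applying Cauchy's differentiation formula to \eqref{as2}), we estimate the integrand on a fixed circle $C_{z_0}$ of radius $r$ centered at $z_0$, where $r$ is chosen once and for all, independently of $s$, small enough that the closed disk lies in $\nb$ and $|\phi(z)|\leqslant 1/2$ there (possible since $\phi(z_0)=0$ and $\phi$ is continuous).

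On this circle we have $|q(z)|\leqslant K_q^*$ and $|z-z_0|^{-s-1}=r^{-s-1}$, so the length-times-sup bound immediately produces the factor $(1/r)^s$ that we want. The only piece whose dependence on $s$ is not transparent is $(1-\phi(z))^{-(s+a)/\mu}$, and this is where I expect the main (though quite mild) obstacle. Using the principal branch of the logarithm and the standard power series estimate $|\log(1-w)|\leqslant -\log(1-|w|)$ for $|w|<1$, one has $|\log(1-\phi(z))|\leqslant \log 2$ uniformly on $C_{z_0}$. Writing $(1-\phi(z))^{-(s+a)/\mu}=\exp\bigl(-((s+a)/\mu)\log(1-\phi(z))\bigr)$ and taking absolute values gives
\begin{equation*}
    \bigl|(1-\phi(z))^{-(s+a)/\mu}\bigr| \leqslant \exp\!\left(\frac{|s+a|}{\mu}\log 2\right) \leqslant 2^{|a|/\mu} \cdot 2^{s/\mu}.
\end{equation*}
Similarly, $|p_0^{-(s+a)/\mu}|=|p_0|^{-\Re((s+a)/\mu)}\exp(\Im((s+a)/\mu)\,\omega_0)$ contributes an $s$-independent constant (depending on $a$) times $|p_0|^{-s/\mu}$.

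Multiplying these estimates and the factor $2\pi r$ for the length of $C_{z_0}$, we obtain
\begin{equation*}
    |\alpha_s| \leqslant K_a \cdot K_q^* \cdot \left(\frac{2^{1/\mu}}{|p_0|^{1/\mu}\, r}\right)^s
\end{equation*}
with a constant $K_a$ depending on $a$, $\mu$, $|p_0|$, $\omega_0$ and $r$, but not on $s$ or $q$. Setting $C:=2^{1/\mu}/(|p_0|^{1/\mu}r)$ gives \eqref{albnd2}. Note that the dependence on $q$ enters only through the sup bound $K_q^*$ on the circle, which is why the implied constant can be taken independent of $q$.
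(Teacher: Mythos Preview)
Your proof is correct. Both you and the paper bound $\alpha_s$ by estimating a Cauchy-type contour integral on a fixed circle, but you work with different representations. The paper invokes formula \eqref{dgf2} itself (the integral in the $\tau$-variable over $C_0 = \partial \mathcal D_\tau$), where the \emph{only} $s$-dependence sits in the factor $\tau^{s-\mu+2}$; the bound $|\alpha_s| \ll K_q^* \cdot (\text{radius of } \mathcal D_\tau)^{-s}$ is then immediate, with $C$ the reciprocal of that radius. You instead use the $z$-variable representation obtained from Cauchy's differentiation formula applied to \eqref{as2}, where the $s$-dependence is spread across three factors $p_0^{-(s+a)/\mu}$, $(1-\phi(z))^{-(s+a)/\mu}$, and $(z-z_0)^{-s-1}$; you handle each correctly and arrive at $C = 2^{1/\mu}/(|p_0|^{1/\mu} r)$. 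The paper's route is a one-liner once the machinery of the second proof (the conformal change $z \mapsto \tau$) is in place, while your route is self-contained and does not require that machinery---a reasonable trade-off.
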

\begin{proof}
The result follows from \eqref{dgf2} with $C$ taken as the reciprocal of the radius of $\mathcal D_\tau$.
\end{proof}

\section{Applications} \label{app}
The next examples illustrate how to apply Perron's method. Given an integral depending on a parameter $N$ going to infinity, the first task is to try to get it into the form \eqref{i(n)}, perhaps with a change of variables. We are free to move  the path of integration $\cc$ continuously wherever the integrand is holomorphic. If we can ensure that $\Re(p(z))$ is maximized at an endpoint then Theorems \ref{il} or \ref{m4} may be applied. Otherwise we move $\cc$ to pass through saddle-points and employ Corollaries \ref{il2}, \ref{ilf}, \ref{m5} or Theorem \ref{m6}.

\subsection{Gamma function asymptotics} \label{simex}

The standard example, see e.g. \cite[Sect. 5]{Pe17}, is the important gamma function.
For $N>0$ we have
\begin{equation*}
  \G(N+1)=\int_0^\infty e^{-t} t^N \, dt = N^{N+1}\int_0^\infty e^{N(-z+\log z)}  \, dz
\end{equation*}
with the change of variables $t=Nz$. Fitting the last integral into \eqref{i(n)} and Assumptions \ref{ma0}, write  $q(z)=1$ and $p(z)=-z+\log z$ with $p'(z)=-1+1/z$. This shows there is a saddle-point at $z_0=1$.
Close to $z=1$ we have the expansion $-\log z = (1-z)+(1-z)^2/2+(1-z)^3/3+\cdots$, so the range of integration can be restricted to $[1/2,3/2]$, say, and it is easy to see that the remaining integral will be too small to affect the result.

 Hence, for $|z-1|\lqs 1/2$, $p(z)$ equals
\begin{equation*}
  p(1)-p_0 (z-1)^\mu(1-\phi(z)) = -1-\frac 12 (z-1)^2\left(1-\frac 23(z-1) + \frac 24 (z-1)^2 + \cdots\right)
\end{equation*}
so that
$p_0=1/2$, $\omega_0=0$, $\mu=2$  and $p_s/p_0=(-1)^s 2/(s+2)$. The steepest descent angles are $\theta_\ell = \pi\ell$.  The assumptions of Corollary \ref{ilf} hold (with $k=0$)
and on simplifying it shows
\begin{equation*}
  \G(N+1)= \sqrt{2\pi N}\left(\frac{N}{e}\right)^N \left(1+\frac{\gamma_1}{N}+\frac{\gamma_2}{N^2}+ \cdots + \frac{\gamma_{k-1}}{N^{k-1}} +O\left(\frac{1}{N^k}\right)\right)
\end{equation*}
for, by Proposition \ref{wojf} (with $a=1$),
\begin{equation} \label{g!}
  \gamma_m = \frac{(2m)!}{m! 2^m} \sum_{j=0}^{2m}  \binom{-m-1/2}{j} \hat{B}_{2m,j}\left(-\frac{2}{3},\frac{2}{4},-\frac{2}{5},\frac{2}{6},-\frac{2}{7}, \cdots \right).
\end{equation}
The first coefficients are
$\g_1=
1/12,$ $\g_2=1/288,$ $\g_3=-139/51840$ as Laplace already knew.  See  \cite[Example 1]{Nem13} for different treatments of \eqref{g!}. Approximations to the gamma function are still an interesting and active area of research as shown in \cite{Ch13}.

\subsection{The equation of the center} \label{eqcent}
In Kepler's theory of motion, the planets orbit the sun in ellipses of eccentricity $\varepsilon$ with the sun at one focus. The true anomaly $\nu$ is the angle  made from this focus and may be compared with the angle $M$ (the mean anomaly) made if the planet were in uniform circular motion, with the same period, about the mid point of the foci. These quantities are related by Kepler's equations
\begin{equation*}
  \cos \nu = \frac{\cos E - \varepsilon}{1-\varepsilon \cos E}, \qquad M=E-\varepsilon \sin E
\end{equation*}
for an intermediate quantity $E$, called the eccentric anomaly.
The  {\em equation of the center} refers to different ways to relate $\nu$ to $M$ directly. An important way is through the Fourier expansion
\begin{equation} \label{cen}
\nu - M = \sum_{n=1}^\infty C_n \sin(nM) \qquad \text{for} \qquad C_n = \frac{\sqrt{1-\varepsilon^2}}{\pi n}\int_{-\pi}^{\pi} \frac{e^{n i(z- \varepsilon\sin z)}}{1-\varepsilon\cos z} \,dz,
\end{equation}
as derived in  \cite[pp. 210--212]{Ba}, for example.
 The integral appearing in \eqref{cen}  is the one from the introduction, \eqref{vare}. Before working on the asymptotics of \eqref{vare} we take a simpler case.

The integral
\begin{equation}\label{51}
   \int_{-\pi}^{\pi} e^{N i(z- \sin z)} \,dz \qquad (N \in \Z_{\gqs 1})
\end{equation}
is studied in \cite{Bu14}, \cite{Pe17}.
Fitting it to the assumptions of Corollary \ref{il2}, we have $q(z)=1$ and $p(z)=i(z- \sin z)$ with $p'(z)=i(1-\cos z)$. This shows there is a saddle-point at $z_0=0$ and writing
\begin{equation*}
  p(z)=0-p_0 z^\mu(1-\phi(z)) = -\left(\frac{-i}{3!}\right)z^3\left(1-\frac{3!}{5!} z^2 + \cdots\right)
\end{equation*}
means that
$p_0=-i/6$, $\omega_0=-\pi/2$, $\mu=3$  and $1-\phi(z)=6(z-\sin z)/z^3$. The steepest descent angles are $\theta_\ell = \pi/6+2\pi\ell/3$ as shown in Figure \ref{spokesfig}. We change the path of integration to
\begin{equation} \label{eqint}
  \int_{-\pi}^{-\pi+\pi i/\sqrt{3}}+ \int_{-\pi+\pi i/\sqrt{3}}^0 +  \int_0^{\pi+\pi i/\sqrt{3}} +  \int_{\pi+\pi i/\sqrt{3}}^{\pi}
\end{equation}
so that $0$ is approached along the line with angle $\theta_{k_1}$ for
 $k_1=1$ and, on leaving $0$, the line with angle $\theta_{k_2}$ for $k_2=0$ is followed. The integrals along the vertical lines cancel since the integrand has period $2\pi$.
 We have
 \begin{equation*}
   \Re(p(t e^{i \theta_0})) = \Re(p(t e^{i \theta_1})) = f(t) \quad \text{for} \quad f(t):=\cos(\sqrt{3}t/2)\sinh(t/2)-t/2
 \end{equation*}
 with $f(0)=0$. To confirm condition \eqref{c1x} we need to show that $f(t)<0$ for $0<t\lqs 2\pi/\sqrt{3}$.
 One approach is to first note that
 \begin{equation*}
   f'''(t) = -\cos(\sqrt{3}t/2) \cosh(t/2).
 \end{equation*}
 Hence $f''(t)$ is decreasing on $[0,\pi/\sqrt{3})$ and increasing on $[\pi/\sqrt{3},2\pi/\sqrt{3}]$. As $f''(0)=0$ and $f''(2\pi/\sqrt{3})$ is positive, this means that $f''(t)$ is negative on an interval $(0,c)$ and  positive on $(c,2\pi/\sqrt{3}]$ for some $c$.  We see that $f'(t)$ decreases from $f'(0)=0$ and then increases from $t=c$ to $f'(2\pi/\sqrt{3})$ which is $<0$. Therefore $f'(t)$ is negative on $(0,2\pi/\sqrt{3}]$ and so $f(t)$ is decreasing in this range as we wanted.

 Write
\begin{equation*}
  \alpha_s  = \frac{e^{\pi i(s+1)/6} \cdot 6^{(s+1)/3} \cdot d(s)}{3} \quad \text{for} \quad d(s)= \frac{1}{s!} \frac{d^s}{dz^s}\left\{ \left(\frac{6(z-\sin z)}{z^3}\right)^{-(s+1)/3}\right\}_{z=0}.
\end{equation*}
 Also, by Proposition \ref{wojf},
\begin{equation} \label{dss}
  d(s)=\sum_{j=0}^s \binom{-(s+1)/3}{j} \hat{B}_{s,j}\left(0,-\frac{3!}{5!},0,\frac{3!}{7!},0,-\frac{3!}{9!}, \cdots \right)
\end{equation}
and computations yield for example
\begin{equation*}
  d(0)=1, \ d(2)= \frac{1}{20}, \ d(4)= \frac{1}{280}, \ d(6)= \frac{1}{3600}, \ d(8)= \frac{387}{17248000}
\end{equation*}
with $d(s)=0$ for $s$ odd. Then by  Corollary \ref{il2}, for an implied constant depending only on $S$,
\begin{align}
  \int_{-\pi}^{\pi} e^{N i(z- \sin z)} \,dz & =\sum_{s=0}^{S-1}  \G\left(\frac{s+1}{3}\right) \frac{\alpha_s \left(1-e^{2\pi i(s+1)/3}\right)}{N^{(s+1)/3}}+O\left(\frac{1}{N^{(S+1)/3}}\right)\notag\\
  & = \frac 23 \sum_{s=0}^{S-1} \cos\left(\frac{\pi(s+1)}{6}\right) \G\left(\frac{s+1}{3}\right)d(s) \left(\frac 6N \right)^{(s+1)/3}+O\left(\frac{1}{N^{(S+1)/3}}\right).\label{trui}
\end{align}
We can obtain non-zero terms in the sum only for $s\equiv 0,4 \bmod 6$. Formulas \eqref{dss} and \eqref{trui}  give the complete asymptotic expansion of the integral \eqref{51}.
With $S=10$ for example,
\begin{multline} \label{woh}
  \int_{-\pi}^{\pi} e^{N i(z- \sin z)} \,dz = \frac 23 \cos\left(\frac{\pi}{6}\right)  \G\left(\frac{1}{3}\right)\left(\frac 6N \right)^{1/3}
  + \frac 1{420} \cos\left(\frac{5\pi}{6}\right)  \G\left(\frac{5}{3}\right)\left(\frac 6N \right)^{5/3}\\
  + \frac 1{5400} \cos\left(\frac{7\pi}{6}\right)  \G\left(\frac{7}{3}\right)\left(\frac 6N \right)^{7/3}
  +O\left(\frac{1}{N^{11/3}}\right)
\end{multline}
which is equivalent to \cite[Eq. (53)]{Pe17}. When $N=50$, for instance, the integral in \eqref{woh} is approximately $\underline{0.76283538}2546$ with the underlined digits indicating the agreement with the right side of \eqref{woh}. All the numerical calculations in this paper were carried out using Mathematica.

\subsection{Asymptotics of the true anomaly Fourier coefficient $C_N$} \label{egg}
We now turn to the integral
\begin{equation}\label{vare2}
  \int_{-\pi}^{\pi} \frac{e^{N i(z- \varepsilon\sin z)}}{1-\varepsilon\cos z} \,dz
   \qquad (N \in \Z_{\gqs 1}, 0<\varepsilon<1),
\end{equation}
appearing in the equation of the center \eqref{cen}, and the main motivation of the papers \cite{Bu14,Pe17}. We initially follow \cite[pp. 210-214]{Pe17} and then go more deeply into the combinatorics of the expansion coefficients.

Set $p(z)=i(z- \varepsilon\sin z)$ and so $p'(z)=i(1-\varepsilon\cos z)$. It is convenient to define
\begin{equation*}
  \g := \frac{1+\sqrt{1-\varepsilon^2}}{\varepsilon}>1.
\end{equation*}
Then $p'(z)=0$ for $z$ taking the two values $\pm i\log \g$ and we choose $z_0$ to be $i\log \g$. Our computations will show that this is the correct choice. Expanding about this saddle-point gives
\begin{equation} \label{deer}
  \varepsilon \sin(z+z_0) = \sin z+i\sqrt{1-\varepsilon^2} \cos z, \qquad
  \varepsilon \cos(z+z_0) = \cos z-i\sqrt{1-\varepsilon^2} \sin z
\end{equation}
as in \cite[p. 212]{Pe17}. Hence
\begin{align}
  p(z)-p(z_0) & = i(z-z_0)-i\sin(z-z_0) + \sqrt{1-\varepsilon^2}(-1+\cos(z-z_0)) \label{xms} \\
   & = -\frac{\sqrt{1-\varepsilon^2}}{2}(z-z_0)^2\left(1-\frac{2i}{3!\sqrt{1-\varepsilon^2}}(z-z_0)
   -\frac{2}{4!}(z-z_0)^2+\cdots\right) \notag
\end{align}
which implies that $p_0=\sqrt{1-\varepsilon^2}/2$, $\mu=2$ and the steepest descent angles are $\theta_\ell = \pi\ell$. Clearly, $1/(1-\varepsilon\cos z)$ has a simple pole at $z=z_0$, so we let  $q(z)=(z-z_0)/(1-\varepsilon\cos z)$ with $a=0$ and will be applying Theorem \ref{m6}.

The contour of integration should therefore be moved from the real line and go vertically from $-\pi$ to $-\pi+i\log \g$. The path then  approaches $z_0$ along the steepest descent angle $\theta_{k_1}$ for $k_1=1$, circles below $z_0$ and leaves along the angle $\theta_{k_2}$ for $k_2=2$. After reaching $\pi+i\log \g$ it then moves vertically to $\pi$. The integrals on the vertical paths cancel since the integrand has period $2\pi$. For $t\in \R$ we see by \eqref{xms}
that $\Re(p(t+z_0)-p(z_0))=\sqrt{1-\varepsilon^2}(-1+\cos(t))$ and so
 $\Re(p(z)) < \Re(p(z_0))$ for $z$ on the horizontal part of the contour and the conditions for Theorem \ref{m6} are satisfied. (The other saddle-point, $-i\log \g$, has vertical steepest descent lines and so we cannot use it in a similar treatment.)

 Writing $w$ for $z-z_0$ we obtain by \eqref{deer}
\begin{equation*}
  q(z)=\frac{w}{1-\cos w+i \sqrt{1-\varepsilon^2} \sin w} = \frac{2\g}{\varepsilon}\cdot \frac{w}{(e^{iw}-1)} \cdot \frac{1}{(\g^2 e^{-iw}-1)}.
\end{equation*}
We have the expansions
\begin{equation}\label{expsx}
  \frac{z}{e^z-1} = \sum_{m=0}^\infty \frac{B_m}{m!} z^m, \qquad \frac{1}{\xi e^z-1} = \sum_{m=0}^\infty \frac{\beta_{m+1}(\xi)}{(m+1)!} z^m \qquad (\xi \neq 1)
\end{equation}
for the Bernoulli numbers $B_m$ and the coefficients
\begin{equation} \label{glai}
  \beta_{m}(\xi) = (-1)^{m-1} m \sum_{j=1}^{m}  \stirb{m}{j} \frac{(j-1)!}{(\xi - 1)^j} \qquad (\xi \neq 1)
\end{equation}
where $\stirb{m}{j}$ is the Stirling number,  denoting the number of ways to partition a set of size $m$ into $j$  non-empty subsets. See \cite[Prop. 3.2]{OS15} for the formula \eqref{glai} which is similar to a result of Glaisher. Then
\begin{equation*}
   q(z) = \sum_{s=0}^\infty q_s w^s =  \frac{-2\g \cdot i}{\varepsilon} \left(\sum_{m=0}^\infty \frac{B_m}{m!}(i w)^m \right)\left(\sum_{n=0}^\infty \frac{\beta_{n+1}(\g^2)}{(n+1)!}(-i w)^n \right)
\end{equation*}
and we obtain the expression
\begin{equation} \label{xkp}
  q_s = \frac{-2\g \cdot i^{s+1}}{\varepsilon} \sum_{n=0}^s (-1)^n \frac{\beta_{n+1}(\g^2) B_{s-n}}{(n+1)!(s-n)!}.
\end{equation}
With  Proposition \ref{wojf} we may write $\alpha_s=d(s)/(2p_0^{s/2})$ for
\begin{equation} \label{dxm}
  d(s)=\sum_{i=0}^s q_{s-i} \sum_{j=0}^i \binom{-s/2}{j} \hat{B}_{i,j}\left(-\frac{2i}{3!\sqrt{1-\varepsilon^2}},-\frac{2}{4!},
  \frac{2i}{5!\sqrt{1-\varepsilon^2}},\frac{2}{6!}, \cdots \right)
\end{equation}
where the arguments in the above Bell polynomial are
\begin{equation*}
  \frac{p_s}{p_0}=\frac{2\cdot i^s}{(s+2)!} \times \begin{cases}
                                                     1, & \mbox{if $s$ is even}  \\
                                                     -1/\sqrt{1-\varepsilon^2}, & \mbox{if $s$ is odd.}
                                                   \end{cases}
\end{equation*}
A short calculation with \eqref{deer} shows
\begin{equation*}
  e^{p(z_0)} = e^{\sqrt{1-\varepsilon^2}}/\g <1.
\end{equation*}
Putting everything together, and using the last line in the statement of Theorem \ref{m6} for the $s=0$ term, we obtain
\begin{multline}\label{46xb}
    \int_{-\pi}^{\pi} \frac{e^{N i(z- \varepsilon\sin z)}}{1-\varepsilon\cos z} \,dz\\
   = \left(\frac{e^{\sqrt{1-\varepsilon^2}}}{\g}\right)^N\left(
   \frac{\pi}{\sqrt{1-\varepsilon^2}}+\sum_{1\lqs s \lqs S-1, \ s \text{ odd}} \G(s/2) \cdot d(s) \left( \frac{2}{\sqrt{1-\varepsilon^2}N}\right)^{s/2} + O\left(\frac{1}{N^{S/2}}\right)
   \right)
\end{multline}
which, along with \eqref{xkp} and \eqref{dxm}, gives the complete asymptotic expansion.
Computing the first values of $d(s)$, for $s$ odd, we observe that they take the form $f_s(\varepsilon^2)/ (1-\varepsilon^2)^{(s+1)/2}$ for $f_s$ a polynomial with rational coefficients and degree $(s-1)/2$. For instance
\begin{equation*}
  f_1(x)=2/3, \quad f_3(x)=-(46+189x)/540, \quad f_5(x)=(92+6228x+4887x^2)/36288.
\end{equation*}
It would be interesting to prove that this form always holds. With $S=5$ we find
\begin{multline}\label{46xb2}
    \int_{-\pi}^{\pi} \frac{e^{N i(z- \varepsilon\sin z)}}{1-\varepsilon\cos z} \,dz
   = \left(\frac{e^{\sqrt{1-\varepsilon^2}}}{\g}\right)^N\left(
   \frac{\pi}{\sqrt{1-\varepsilon^2}}+ \G(1/2)\frac{2}{3(1-\varepsilon^2)} \left( \frac{2}{\sqrt{1-\varepsilon^2}N}\right)^{1/2}\right. \\
   \left. - \G(3/2)\frac{46+189\varepsilon^2}{540(1-\varepsilon^2)^2}  \left( \frac{2}{\sqrt{1-\varepsilon^2}N}\right)^{3/2} + O\left(\frac{1}{N^{5/2}}\right)
   \right)
\end{multline}
which is equivalent to \cite[Eq. (45)]{Pe17}. When $N=50$ and $\varepsilon=2/5$, for example, the integral in \eqref{46xb2} is $\approx \underline{2.8171}413884\times 10^{-14}$ with the underlined digits indicating the agreement with the right side of \eqref{46xb2}. Taking $S=13$, i.e. using the first $7$ terms in the expansion \eqref{46xb}, yields the agreement $\underline{2.817141388}4\times 10^{-14}$.

As a referee noted, the method of steepest descent for this example requires moving the contour of integration to a more complicated path near $z_0$ than the horizontal line above. It requires part of the path described by the equation $\cosh(y)=x/(\varepsilon \sin(x))$ for $z=x+iy$. This is where $\Im(p(z)-p(z_0))=0$.

\subsection{The case $\varepsilon = 1$}  \label{egg2}
Taking $\varepsilon = 1$ in \eqref{vare2} produces the integral
\begin{equation}\label{46}
   \int_{-\pi}^{\pi} \frac{e^{N i(z- \sin z)}}{1-\cos z} \,dz \qquad (N \in \Z_{\gqs 1})
\end{equation}
which is studied in example 4 of \cite{Pe17}. This would correspond to a parabolic orbit if \eqref{cen} were valid for $\varepsilon = 1$. The path of integration in \eqref{46} must avoid the double pole at $z=0$ in order to converge. The expansion of the integrand at $z=0$ begins $2/z^2 + 1/6 + (N i z)/3 + z^2/120 + \cdots$, implying the residue at $z=0$ is zero. Since the integrand has period $2\pi$, all the residues are zero and so the integral is completely independent of any pole-avoiding path of integration  from $-\pi$ to $\pi$.

The function $p(z)$ is the same as in Section \ref{eqcent}, but now $q(z)=z^2/(1-\cos z)$ and $a=-1$. We will use Theorem \ref{m6} and so the path of integration \eqref{eqint} must be adjusted to circle at a small radius about the pole at $z_0=0$.
 Then
\begin{equation*}
  \alpha_s  = \frac{e^{\pi i(s-1)/6} \cdot 6^{(s-1)/3} \cdot d^*(s)}{3} \quad \text{for} \quad d^*(s)= \frac{1}{s!} \frac{d^s}{dz^s}\left\{ \frac{z^2}{1-\cos z} \left(\frac{6(z-\sin z)}{z^3}\right)^{-(s-1)/3}\right\}_{z=0}.
\end{equation*}
We have
\begin{equation*}
   \sum_{s=0}^\infty q_s z^s = \frac{z^2}{1-\cos z} = 2\left(\frac{i z}{e^{i z}-1}\right)\left(\frac{-i z}{e^{-i z}-1}\right) = 2 \left(\sum_{m=0}^\infty \frac{B_m}{m!}(i z)^m \right)\left(\sum_{n=0}^\infty \frac{B_n}{n!}(-i z)^n \right).
\end{equation*}
It follows that $q_s$ is $0$ for odd $s$ and for $s$ even
\begin{equation} \label{dss2x}
  q_s = 2 (-1)^{s/2} \sum_{n=0}^s (-1)^n \frac{B_n B_{s-n}}{n!(s-n)!}.
\end{equation}
Proposition \ref{wojf} tells us
\begin{equation} \label{dss2}
  d^*(s)=\sum_{i=0}^s q_{s-i} \sum_{j=0}^i \binom{-(s-1)/3}{j} \hat{B}_{i,j}\left(0,-\frac{3!}{5!},0,\frac{3!}{7!},0,-\frac{3!}{9!}, \cdots \right)
\end{equation}
and computations yield for example
\begin{equation*}
  d^*(0)=2, \ d^*(2)= \frac{1}{5}, \ d^*(4)= \frac{27}{1400}, \ d^*(6)= \frac{23}{12600}, \ d^*(8)= \frac{947}{5544000}
\end{equation*}
with $d^*(s)=0$ for $s$ odd. Then, for an implied constant depending only on $S$,
\begin{equation}
  \int_{-\pi}^{\pi} \frac{e^{N i(z- \sin z)}}{1-\cos z} \,dz  = \frac 23 \sum_{s=0}^{S-1} \cos\left(\frac{\pi(s-1)}{6}\right) \G\left(\frac{s-1}{3}\right)d^*(s) \left(\frac 6N \right)^{(s-1)/3}+O\left(\frac{1}{N^{(S-1)/3}}\right).\label{trui2}
\end{equation}
We can obtain non-zero terms in the sum only for $s\equiv 0,2 \bmod 6$. The term with $s=1$ needs the formula from the last line of the statement of Theorem \ref{m6}, but in any case vanishes since $d^*(1)=0$. Formulas \eqref{dss2x}, \eqref{dss2} and \eqref{trui2}  give the complete asymptotic expansion of the integral \eqref{46}. Taking $S=8$ for example,
\begin{multline} \label{woh2}
  \int_{-\pi}^{\pi} \frac{e^{N i(z- \sin z)}}{1-\cos z} \,dz = \frac 43 \cos\left(\frac{-\pi}{6}\right)  \G\left(\frac{-1}{3}\right)\left(\frac N6 \right)^{1/3}
  + \frac 2{15} \cos\left(\frac{\pi}{6}\right)  \G\left(\frac{1}{3}\right)\left(\frac 6N \right)^{1/3}\\
  + \frac{23}{18900} \cos\left(\frac{5\pi}{6}\right)  \G\left(\frac{5}{3}\right)\left(\frac 6N \right)^{5/3}
  +O\left(\frac{1}{N^{7/3}}\right)
\end{multline}
with the first two terms of this expansion given in \cite[Eq. (50)]{Pe17}. When $N=50$ the integral in \eqref{woh2} is $\approx \underline{-9.35758}5773084$ and the underlined digits show the agreement with the right hand side.

\section{The asymptotics of Sylvester waves} \label{syl}
In this section we give an application of Perron's method to number theory. Let $p(n)$ be the number of  partitions of the positive integer $n$. This is the number of ways to write $n$ as a sum of non-increasing positive integers.
Also let $p_N(n)$ count the partitions of $n$ with at most $N$ summands.
Since the work of Cayley  and Sylvester in the nineteenth century, we know that
\begin{equation*} 
    p_N(n)=\sum_{k=1}^N W_k(N,n)
\end{equation*}
where each $W_k(N,n)$ may be expressed in terms of a  sequence of $k$ polynomials $w_{k,m}(N,x) \in \Q[x]$ for $0\lqs m \lqs k-1$. Write
\begin{equation} \label{wkv}
    W_k(N,n) = \bigl[ w_{k,0}(N,n), \ w_{k,1}(N,n), \ \ldots, \ w_{k,k-1}(N,n) \bigr],
\end{equation}
where the notation in \eqref{wkv} indicates that the value of $W_k(N,n)$ is given by one of the polynomials on the right and we select $w_{k,j}(N,n)$ when $n \equiv j \bmod k$. The degrees of the polynomials on the right of \eqref{wkv} are at most $\lfloor N/k \rfloor -1$.

For example,  with $N=3$ we have $p_3(n)=W_1(3,n)+ W_2(3,n) + W_3(3,n)$
where
\begin{align}
W_1(3,n) & = \left[6 n^2+36 n+47 \right]/72, \notag\\
    W_2(3,n) & = \left[1, \ -1 \right]/8,\notag\\
    W_3(3,n) & = \left[ 2, \ -1, \ -1 \right]/9. \notag
\end{align}
Sylvester called $W_k(N,n)$ the {\em $k$-th wave} and provided the formula
\begin{equation} \label{wave}
    W_k(N,n)=\res_{z=0} \sum_\rho \frac{\rho^n e^{n z}}{(1-\rho^{-1} e^{-z})(1-\rho^{-2} e^{-2z}) \cdots (1-\rho^{-N} e^{-Nz})}
\end{equation}
in \cite{Sy3}, where $\res_{z=0}$ indicates the coefficient of $1/z$ in the Laurent expansion about $0$, and the sum is over all primitive $k$-th roots of unity $\rho$. For a more detailed discussion of the above results with references, see Sections 1 and 2 of \cite{OSpsw}.

When $N=3$ it is clear that the first wave $W_1(3,n)$ will make the largest contribution to $p_3(n)$ for large $n$. Similarly, $p_N(n)  \sim W_1(N,n)$ for any fixed $N$ as $n \to \infty$. A more difficult question, which we answer for the first time in \cite{OSpsw}, is how the first waves $W_1(N,n)+W_2(N,n)+\cdots$ compare with $p_N(n)$  as $N$ and $n$ both go to $\infty$. The answer, perhaps surprisingly, is that when $N$ and $n$ grow at approximately the same rate, the first waves quickly become much larger than $p_N(n)$ (in absolute value, since these waves also oscillate like a sine with period $\approx 31.963$ in $N$).

The asymptotics of the first 100 waves is given in \cite{OSpsw} as follows, in terms of two uniquely defined complex numbers with approximations $w_0 \approx 0.916198 - 0.182459 i$ and  $z_0 \approx 1.181475 + 0.255528 i$.

\begin{theorem} \label{max2}
Let $\lambda^+$ be a positive real number.  Suppose $N \in \Z_{\gqs 1}$ and  $\lambda N \in \Z$ for $\lambda$ satisfying $|\lambda| \lqs \lambda^+$.
Then there are explicit coefficients $a_{0}(\lambda),$ $a_{1}(\lambda), \dots $ so that
\begin{equation} \label{pres}
   \sum_{k=1}^{100} W_k(N,\lambda N) = \Re\left[\frac{w_0^{-N}}{N^{2}} \left( a_{0}(\lambda)+\frac{a_{1}(\lambda)}{N}+ \dots +\frac{a_{m-1}(\lambda)}{N^{m-1}}\right)\right] + O\left(\frac{|w_0|^{-N}}{N^{m+2}}\right)
\end{equation}
as $N \to \infty$ where  $a_{0}(\lambda)=2  z_0 e^{-\pi i z_0(1+2\lambda)}$ and the implied constant depends only on  $\lambda^+$ and $m$.
\end{theorem}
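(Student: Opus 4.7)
The plan is to start from Sylvester's formula \eqref{wave}, express the residue at $z=0$ as a small contour integral, and then deform the contour to pass through a saddle point in the upper half plane (together with its conjugate in the lower half plane) where Perron's method can be applied. Combining the first $100$ waves with the change of variable $n=\lambda N$ gives
\begin{equation*}
  \sum_{k=1}^{100} W_k(N,\lambda N) = \frac{1}{2\pi i} \oint_{\cc} F_N(z)\, dz, \qquad F_N(z):=\sum_{k=1}^{100}\sum_\rho \frac{\rho^{\lambda N} e^{\lambda N z}}{\prod_{j=1}^N\bigl(1-\rho^{-j}e^{-jz}\bigr)},
\end{equation*}
where $\cc$ is an initially small circle about $0$ and the inner sum is over primitive $k$-th roots of unity $\rho$.

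My next step would be to extract the exponential part of $F_N(z)$. Using $-\log(1-\rho^{-j}e^{-jz}) = \sum_{m\gqs 1}\rho^{-jm}e^{-jmz}/m$ term by term and then performing an Euler--Maclaurin estimate in $j$ (following the machinery developed in \cite{OS1}), the product can be written, uniformly on the deformed contour away from $z=0$, as $\exp(N\,p(z))\cdot q(z)\cdot (1+O(e^{-\varepsilon N}))$. Here $p(z)$ is built from dilogarithms of $\rho^{-1}e^{-z}$ while $q(z)$ collects the subleading factors (together with the $e^{\lambda N z}$ already present, which modifies $p$ by a $\lambda z$ term). The saddle-point equation $p'(z_0)=0$ is then a transcendental equation whose solution defines $z_0$; setting $w_0:=e^{-p(z_0)}$ recovers the numerical values $w_0 \approx 0.916198 - 0.182459\,i$ and $z_0 \approx 1.181475 + 0.255528\,i$ stated in the theorem.

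With the saddle identified, I would deform $\cc$ so that, on the portion in the upper half plane, it passes through $z_0$ along the two opposite steepest-descent directions at $z_0$ (so that $\mu=2$), and similarly through $\bar z_0$ in the lower half plane. The symmetry $F_N(\bar z)=\overline{F_N(z)}$ means the $\bar z_0$ contribution is the complex conjugate of the $z_0$ one, which produces the $\Re[\,\cdot\,]$ in \eqref{pres}. Corollary \ref{ilf} now applies at $z_0$; the leading coefficient $a_0(\lambda)=2z_0 e^{-\pi i z_0(1+2\lambda)}$ arises by combining the $\alpha_0$ of \eqref{as} with the factor $\frac{1}{2\pi i}$, the steepest-descent angle phase $e^{2\pi i k/\mu}$, and the evaluation of $q$ at $z_0$. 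The higher $a_m(\lambda)$ then follow from Proposition \ref{wojf}, and the error $O(|w_0|^{-N}/N^{m+2})$ is precisely the error guaranteed by \eqref{wim}, where the explicit dependence on $q$ provided by our improvement over Olver's version (see the remarks after Proposition \ref{albnd}) is essential for tracking the polynomial prefactor in $\lambda$.

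The main obstacle will be the uniform Euler--Maclaurin estimate for $\prod_{j=1}^N(1-\rho^{-j}e^{-jz})$ valid on the full deformed contour, together with the global verification that $\Re(p(z))$ attains its maximum on the deformed contour only at $z_0$ and $\bar z_0$. Proposition \ref{basic} supplies this information locally, but globally one must rule out any other saddle with the same or larger value of $\Re(p)$, and in particular verify that the saddles attached to the waves $W_k$ with $2\lqs k\lqs 100$ either coincide with $z_0$ (and so merely contribute to the coefficients $a_m(\lambda)$) or have strictly smaller $\Re(p)$ (and are absorbed into the error). A secondary technical point will be the bookkeeping of the branch of $(z-z_0)^{a-1}$ needed to interpret $q(z)$ correctly on the two sides of $z_0$; this is handled by Theorem \ref{m6} and does not introduce new conceptual difficulties.
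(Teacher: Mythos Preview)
Your overall plan---extract the large-$N$ exponential via Euler--Maclaurin and then apply Perron's method at a saddle---is the right shape, but several of the concrete claims are incorrect and would block the argument.

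The most serious gap is the asserted form of the integrand. You write that the product can be put in the form $\exp(N\,p(z))\cdot q(z)\cdot(1+O(e^{-\varepsilon N}))$ with $q$ independent of $N$. In fact the Euler--Maclaurin corrections are \emph{not} exponentially small: what one actually obtains is an asymptotic expansion
\[
\exp\bigl(N\, p(z)\bigr)\left(u_0(z)+\frac{u_1(z)}{N}+\cdots+\frac{u_{d-1}(z)}{N^{d-1}}+O\!\left(\frac{1}{N^{d}}\right)\right),
\]
so the ``$q$'' in Perron's method depends on $N$ and Corollary \ref{ilf} does not apply in one shot. The paper handles this by applying Corollary \ref{ilf} separately to each $q=f_\lambda\cdot u_j$, summing, and then reorganising the double sum; this is exactly why the error in Theorem \ref{il} was sharpened to display the $K_q$ dependence, and why Proposition \ref{albnd} is needed to control the cross terms. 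Your proposal does not address this step.

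A related problem is the choice of variable. In the raw Sylvester variable the saddle scales like $1/N$, so the hypotheses of Corollary \ref{ilf} (fixed $z_0$, fixed $p$, fixed contour) are not met; one must rescale first. After the correct rescaling the factor $e^{\lambda N z}$ becomes $e^{2\pi i \lambda z}$ with no $N$, hence goes into $q$ rather than $p$. Thus your claim that the $\lambda z$ term modifies $p$ is wrong, and indeed $z_0$ cannot depend on $\lambda$ if the theorem as stated is to make sense. Finally, there is no singular factor $(z-z_0)^{a-1}$ here, so the discussion of Theorem \ref{m6} is a red herring.

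For comparison, the paper's route avoids your proposed contour-deformation-from-a-small-circle entirely: it quotes from \cite{OSpsw} an integral representation over the real segment $[1.01,1.49]$ already in rescaled form, with $p(z)=\bigl(\li(e^{2\pi i z})-\li(1)\bigr)/(2\pi i z)$ independent of $\lambda$ and with the $N$-dependent factor $\exp(v(z;N))$ expanded as above. It then moves the path through the single saddle $z_0$, verifies \eqref{c1x} by citing \cite[Thm.~5.2]{OS1}, and takes an imaginary part rather than pairing conjugate saddles. Your conjugate-saddle symmetry idea is equivalent in spirit, but the substantive work---the rescaled integral representation and the $1/N$ expansion of the non-exponential factor---is what is missing from your outline.
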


In the rest of this section we briefly sketch the proof of Theorem  \ref{max2}, highlighting the role of Perron's method in the form of Corollary \ref{ilf}.
We require the dilogarithm, which is
initially defined as
\begin{equation*}
\li(z):=\sum_{n=1}^\infty \frac{z^n}{n^2} \quad \text{for} \quad |z|\lqs 1,
\end{equation*}
with an analytic continuation given by $ -\int_0^z \log(1-u)/u \, du$.

\begin{proof}[Sketch of proof of Theorem \ref{max2}]
In  \cite[Eq. (3.6)]{OSpsw}, it is shown that the left side of \eqref{pres} may be expressed as a sum of three parts. As in the proof of \cite[Thm. 1.2]{OSpsw}, two of these parts are $O(e^{0.055N})$. The third part may be expressed as an integral, see \cite[Eq. (5.13)]{OSpsw}, to obtain
\begin{equation} \label{ire}
    \sum_{k=1}^{100} W_k(N,\lambda N) = \frac{2}{N^{3/2}} \Im \int_{1.01}^{1.49} e^{N \cdot p(z)}  f_{\lambda}(z) \cdot \exp\bigl(v(z;N)\bigr) \, dz +O(e^{0.055N}),
\end{equation}
for an implied constant depending only on  $\lambda^+$, where
\begin{align*}
    p(z) & :=\frac{ \li\left(e^{2\pi i z}\right)-\li(1)}{2\pi i z}, \\
f_{\lambda}(z) & := \left( \frac{z }{2\sin(\pi(z -1))}\right)^{1/2} \exp\bigl(-\pi i z(2\lambda +1/2)\bigr).
\end{align*}
(In \cite{OSpsw}, the function $p(z)$ is used  with the opposite sign.)
To describe a useful approximation to the function $\exp\bigl(v(z;N)\bigr)$, we first define
\begin{align*}
    g_\ell(z) & :=-\frac{B_{2\ell}}{(2\ell)!} \left( \pi z\right)^{2\ell-1} \cot^{(2\ell-2)}\left(\pi z\right),\\
    u_{j}(z) & :=\sum_{m_1+3m_2+5m_3+ \dots =j}\frac{g_1(z)^{m_1}}{m_1!}\frac{g_2(z)^{m_2}}{m_2!} \cdots \frac{g_j(z)^{m_j}}{m_j!}
\end{align*}
with $u_{0}:=1$. Also define the box
\begin{equation*}\label{box}
    \mathbb B_1 := \{z\in \C \ : \ 1.01\lqs \Re(z) \lqs 1.49, \ -1 \lqs \Im(z) \lqs 1\}.
\end{equation*}
Then there are  functions $u_{j}(z)$ (defined above) and $\zeta_d(z;N)$ which are holomorphic on a domain containing the box $\mathbb B_1$ and have the following property. For all $z \in \mathbb B_1$,
\begin{equation} \label{cadiz}
    \exp\bigl(v(z;N)\bigr) = \sum_{j=0}^{d-1} \frac{u_{j}(z)}{N^j} + \zeta_d(z;N) \quad \text{for} \quad \zeta_d(z;N) = O\left(\frac{1}{N^d} \right)
\end{equation}
with an implied constant depending only on  $d$ where $1 \lqs d \lqs 2L-1$ and $L=\lfloor 0.006 \pi e \cdot N \rfloor$.

Since $|\exp(-2\pi i \lambda z )| \lqs \exp(\lambda^+ 2\pi   |z| )$ it follows that
\begin{equation}\label{soota}
    f_{\lambda}(z)  \ll 1 \quad \text{for} \quad z \in \mathbb B_1
\end{equation}
with an implied constant depending only on $\lambda^+$.

To apply Corollary \ref{ilf} we need the relevant saddle-point of $p(z)$ and this turns out to be $z_0:= 1+\log(1-w_0)/(2\pi i)$ where $w_0$ is the unique solution to $\li(w)-2\pi i \log(w) = 0$. Both $z_0$ and $w_0$ may be found to any precision and their approximations were given before Theorem  \ref{max2}. (It is straightforward to compute the size of the error introduced into \eqref{pres} by using approximations to $z_0$ and $w_0$.) We find $\mu=2$, $p_0\approx 0.504-0.241i$ and the steepest-descent angles are $\theta_0\approx 0.223$ and $\theta_1=\pi+\theta_0$.

Let $c:=1+i\Im(z_0)/\Re(z_0)$. We move the path of integration in \eqref{ire} to the path $\mathcal P$ through $z_0$ consisting of the straight line segments joining the points $1.01,$  $1.01c,$  $1.49c$ and $1.49$. Since the integrand in \eqref{ire} is holomorphic on a domain containing $\mathbb B_1$, Cauchy's theorem ensures that the integral remains the same under this change of path.
It is proved in \cite[Thm. 5.2]{OS1} that
\begin{equation}\label{aqw}
    \Re(p(z)-p(z_0))<0 \quad \text{for all} \quad z \in \mathcal P, \ z \neq z_0.
\end{equation}
We also need from \cite[Eq. (5.16)]{OSpsw} that
\begin{equation*}
 e^{p(z_0)} = w_0^{-1} \quad \text{and} \quad   e^{\Re(p(z_0))} = |w_0|^{-1} \approx e^{0.068}.
\end{equation*}

Using \eqref{cadiz} in \eqref{ire} implies
\begin{multline}\label{umand}
    \sum_{k=1}^{100} W_k(N,\lambda N)  = \Im\Biggl[ \sum_{j=0}^{d-1} \frac{2}{N^{3/2+j}}  \int_{\mathcal P} e^{N \cdot p(z)} \cdot f_{\lambda}(z) \cdot u_{j}(z) \, dz\\
     + \frac{2}{N^{3/2}}  \int_{\mathcal P} e^{N \cdot p(z)} \cdot f_{\lambda}(z) \cdot \zeta_d(z;N) \, dz \Biggr]+ O(e^{0.055N})
\end{multline}
where, by \eqref{cadiz}, \eqref{soota} and \eqref{aqw}, the last term in parentheses in \eqref{umand} is
\begin{equation*}
    \ll \frac{1}{N^{3/2}}  \int_{\mathcal P} \left|e^{N \cdot p(z)}\right| \cdot 1 \cdot \frac{1}{N^d} \, dz
    \ll \frac{1}{N^{d+3/2}} e^{N \Re(p(z_0))} = \frac{|w_0|^{-N}}{N^{d+3/2}},
\end{equation*}
for an implied constant depending only on $\lambda^+$.
Applying Corollary \ref{ilf} to each integral  in the first part of \eqref{umand}
 we obtain, since $k=0$,
\begin{equation} \label{wmand}
    \int_{\mathcal P} e^{N \cdot p(z)} \cdot f_{\lambda}(z) \cdot u_{j}(z) \, dz =  e^{N \cdot p(z_0)}\left(\sum_{m=0}^{M-1}\G\left(m+\frac 12 \right) \frac{2\alpha_{2m}(f_{\lambda} \cdot u_{j})}{N^{m+1/2}}+O\left( \frac{K(f_{\lambda} \cdot u_{j})}{N^{M+1/2}}\right) \right).
\end{equation}
We have written $\alpha_{2m}(q)$, to show the dependence of  $\alpha_{2m}$ on $q=f_{\lambda} \cdot u_{j}$, and also $K(q)$ instead of $K_q$.
The error term in \eqref{wmand} corresponds to an error in \eqref{umand} of size $O(|w_0|^{-N}/N^{M+j+2})$.
Choose $M=d$ so that this error  is less than $O(|w_0|^{-N}/N^{d+3/2})$ for all $j \gqs 0$.
Therefore
\begin{align}
    \sum_{k=1}^{100} W_k(N,\lambda N) & = \Im \left[
    \sum_{j=0}^{d-1} \frac{4}{N^{j+3/2}}   e^{N \cdot p(z_0)} \sum_{m=0}^{d-1} \G\left(m+\frac 12 \right) \frac{\alpha_{2m}(f_{\lambda} \cdot u_{j})}{N^{m+1/2}}
    \right]+ O\left( \frac{|w_0|^{-N}}{N^{d+3/2}}\right) \notag\\
    & = \Im \left[  w_0^{-N}
    \sum_{t=0}^{2d-2} \frac{4}{N^{t+2}}    \sum_{m=\max(0,t-d+1)}^{\min(t,d-1)} \G\left(m+\frac 12 \right)  \alpha_{2m}(f_{\lambda} \cdot u_{t-m})
    \right]+ O\left( \frac{|w_0|^{-N}}{N^{d+3/2}}\right) \label{jmp}\\
    & = \Re \left[  w_0^{-N}
    \sum_{t=0}^{d-2} \frac{-4i}{N^{t+2}}    \sum_{m=0}^{t} \G\left(m+\frac 12 \right)  \alpha_{2m}(f_{\lambda} \cdot u_{ t-m})
    \right]+ O\left( \frac{|w_0|^{-N}}{N^{d+1}}\right) \label{jmp2}
\end{align}
for implied constants depending only on $\lambda^+$ and $d$. (In going from \eqref{jmp} to \eqref{jmp2} we used that $|\alpha_{2m}(f_{\lambda} \cdot u_{j})|$ has a bound  depending only  on  $\lambda^+$ and $d$, by Proposition \ref{albnd}, when $m,$ $j \lqs d-1$.)
 Hence,  with
\begin{equation} \label{btys}
    a_t(\lambda):=  -4i  \sum_{m=0}^t \G\left(m+\frac 12 \right) \alpha_{2m}(f_{\lambda} \cdot u_{t-m}),
\end{equation}
we obtain \eqref{pres} in the statement of the theorem.

The first coefficient is
\begin{equation} \label{oad}
    a_0(\lambda)=-4i \G(1/2) \alpha_0(f_{\lambda} \cdot u_{0,0})= -4i \sqrt{\pi} \alpha_0(f_{\lambda})= -2i \sqrt{\pi} p_0^{1/2} f_{\lambda}(z_0),
\end{equation}
using our formula for $\alpha_0$ from Section \ref{formul}. The calculations  \cite[Eqs. (5.24), (5.26)]{OSpsw} show
\begin{equation} \label{woad2}
    p_0^{1/2}  = -\frac{\sqrt{\pi} e^{-\pi i/4} e^{\pi i z_0}}{z_0^{1/2} w_0^{1/2}},\qquad
    f_{\lambda}(z_0)   = -\frac{e^{\pi i/4} z_0^{1/2}}{w_0^{1/2}}  e^{-2\pi i \lambda z_0}.
\end{equation}
The formula for $a_0(\lambda)$ in the statement of the theorem follows from \eqref{oad} and \eqref{woad2}.
\end{proof}

We may take $N=2000$ and $\lambda=1$ as an example of Theorem \ref{max2}.  The first wave $W_1(N,N)$ is $\approx 4.37 \times 10^{53}$ with the next waves much smaller: $W_2(N,N) \approx 4.98 \times 10^{23},$  $W_3(N,N) \approx -8.22 \times 10^{13}$ etc. We find that the main term on the right of \eqref{pres} is $\approx 4.56 \times 10^{53}$. Taking the first $3$ terms on the right of \eqref{pres} gives the more accurate $4.37 \times 10^{53}$. By comparison, the corresponding partition number $p(N)$ ($=p_N(N)$) is a lot smaller and approximately $4.72 \times 10^{45}$.

See \cite{OSpsw} for the detailed proof of Theorem \ref{max2} as well as more extensive discussion and numerical work. We expect, as in \cite[Conjecture 9.1]{OSpsw}, that Theorem \ref{max2} is true with the sum of the first 100 waves on the left of \eqref{pres}  replaced by just the first wave $ W_1(N,\lambda N)$.

{\small
\bibliography{asymptotics}
}

{\small 
\vskip 5mm
\noindent
\textsc{Dept. of Math, The CUNY Graduate Center, 365 Fifth Avenue, New York, NY 10016-4309, U.S.A.}

\noindent
{\em E-mail address:} \texttt{cosullivan@gc.cuny.edu}
}

\end{document}